\documentclass{amsart}

\usepackage[english]{babel}

\usepackage[letterpaper,margin=1in]{geometry}

\usepackage{enumitem}

\usepackage{amsmath}
\usepackage{amssymb, amsfonts, amsthm}

\usepackage{tikz}
\usepackage{tikz-cd}
\usetikzlibrary{decorations.pathmorphing}

\usepackage{color}

\usepackage{enumitem}

\usepackage[normalem]{ulem}

\usepackage{graphicx}
\usepackage[colorlinks=true, allcolors=blue]{hyperref}
\usepackage{xcolor}

\newtheorem{theorem}{Theorem}[section]
\newtheorem{lemma}[theorem]{Lemma}
\newtheorem{proposition}[theorem]{Proposition}
\newtheorem{corollary}[theorem]{Corollary}

\theoremstyle{definition}
\newtheorem{definition}[theorem]{Definition}
\newtheorem{example}[theorem]{Example}

\theoremstyle{remark}
\newtheorem*{remark}{Remark}

\title{On Vanishing Theorems and Bogomolov's Inequality on Surfaces in Positive Characteristic}

\author{Fei Ye}
\address{Department of Mathematics and Computer Science, Queensborough Community College of CUNY, 222-05, 56th Avenue Bayside, NY 11364}
\address{
Department of Mathematics, CUNY Graduate Center, 365 Fifth Avenue, New York, NY 10016
}
\email{feye@qcc.cuny.edu}

\author{Zhixian Zhu}
\address{Department of Mathematics, Capital Normal University, Beijing China}

\email{zhixian@cnu.edu.cn}

\keywords{Vanishing theorem, Bogomolov's instability, Algebraic surfaces, Vector bundles, Positive characteristic}

\subjclass[2020]{}

\thanks{
The authors would like to thank Jiang Chen and Lei Zhang for invaluable discussion.
The first-named author is partially supported by the PSC-CUNY Award \# 68419-00 56.
}

\begin{document}

\begin{abstract}
In this paper, we study the equivalence between Bogomolov's instability theorem and the Miyaoka-Sakai theorem on surfaces in positive characteristic.
We show that Bogomolov's instability theorem can be derived from Miyaoka-Sakai theorem. Conversely, it implies a partial version of the Miyaoka-Sakai theorem that lacks the vanishing conclusion. This partial version is still sufficient to deduce the Mumford-Ramanujam vanishing theorem.

Additionally, we identify a class of surfaces in positive characteristic for which the Miyaoka-Sakai theorem (or a weaker variant), or the Kawamata-Viehweg vanishing theorem holds. In particular, we present a new proof of the Kawamata-Viehweg vanishing theorem on smooth del Pezzo surfaces.

As an application of the Miyaoka-Sakai theorem, we obtain Reider-type results concerning Fujita's conjecture.
\end{abstract}

\maketitle

\section{Introduction}

Over algebraic closed fields of characteristic zero, vanishing theorems have played irreplaceable roles in algebraic geometry, particularly, in the study of the minimal model program. However, after Raynaud's construction of the first counterexample to the Kodaira vanishing theorem in positive characteristic \cite{Raynaud1978}, additional counterexamples have been discovered for both Kodaira vanishing and Kawamata-Viehweg vanishing (see, for instance, \cite{Ekedahl1988, Xie2010, Xie2011, Mukai2013, Cascini2018, Totaro2019, Bernasconi2021}.  
Parallel to these developments, counterexamples have also been found for Bogomolov's instability theorem \cite{Raynaud1978, Mukai2013} and the Bogomolov-Miyaoka-Yau inequality \cite{Mukai2013}.
The pathology extends to Fujita's conjecture on adjoint linear systems \cite{Gu2022}, which has been established up to dimension five in characteristic zero (see \cite{Reider1988, Ein1993, Kawamata1997, Ye2020}). Despite the failure of Kodaira vanishing and Kawamata-Viehweg vanishing, and Bogomolov's inequality on many surfaces, particularly surfaces of general type or quasi-elliptic surfaces of Kodaira dimension one, substantial effort has been devoted to identifying conditions under which these theorems do hold  (see, for instance, \cite{Shepherd-Barron1991, Terakawa1998, Langer2022a, Koseki2023}).

In characteristic zero, it is known from \cite{DiCerbo2013, Fernandez1995} that Bogomolov's instability theorem is equivalent to both the Kawamata-Viehweg vanishing theorem and the following Miyaoka-Sakai theorem.

\begin{theorem}[Miyaoka-Sakai Theorem (see {\cite[Theorem 2.7]{Miyaoka1980}, \cite[Proposition 1]{Sakai1990}})]
Let $S$ be a smooth surface over an algebraically closed field of characteristic zero and $D$ be a big divisor such that $H^1(S, \mathcal{O}_S(-D))\ne 0$. There exists a nonzero effective divisor $B$ such that 
\begin{enumerate}
    \item $-(D - B)$ is nef on $B$, i.e. $(D-B)C \le 0$ for  any irreducible component $C$ of $B$.
    \item $H^1(S, \mathcal{O}_S(-D+B))=0$.
    \item $D-2B$ is big if $D^2>0$.
\end{enumerate}
\end{theorem}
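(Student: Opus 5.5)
The plan is to prove the theorem via Bogomolov's instability theorem in characteristic zero, through the Ramanujam-style extension construction, with $B$ chosen extremal. Suppose $H^1(S,\mathcal{O}_S(-D))\neq 0$. By Serre duality this gives $\mathrm{Ext}^1(\mathcal{O}_S(D),\mathcal{O}_S(K_S))$, hence a nonsplit extension
\[
0\to \mathcal{O}_S \to E \to \mathcal{O}_S(D)\to 0
\]
(after twisting suitably), with $c_1(E)=D$ and $c_2(E)=0$.

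\textbf{Destabilizing subsheaf.} When $D^2>0$ we have $c_1^2-4c_2=D^2>0$, so Bogomolov's theorem gives a saturated rank one subsheaf $\mathcal{O}_S(M)\subset E$ with $(2M-D)$ in the positive cone. Since the extension is nonsplit and $D-M$ cannot be trivial, the composite $\mathcal{O}_S(M)\to\mathcal{O}_S(D)$ is nonzero. Hence $M=D-B$ with $B$ effective, and $B\neq0$ by nonsplitness. We also get $E/\mathcal{O}_S(D-B)\cong \mathcal{O}_S(B)\otimes I_Z$, and $c_2=0$ gives $(D-B)B+\deg Z=0$, so $(D-B)B\le 0$. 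The positivity of $2M-D=D-2B$ gives (3).

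\textbf{The big-but-not-positive case.} If $D^2\le0$ with $D$ big, first reduce to the positive case. Write the Zariski decomposition $D=P+N$ with $P$ nef and big. I expect the nonvanishing to force a curve where the restriction sequence fails. Alternatively, choose $B$ directly as a minimal effective divisor with $H^0(\mathcal{O}_B(-D+B))$-type nonvanishing: one shows $H^1(-D)\neq 0$ implies some effective $B\le mN$ with $h^0(\mathcal{O}_B(B-D))\neq0$.

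\textbf{Refining $B$ to get (1) and (2).} Among all nonzero effective $B$ such that $H^1(\mathcal{O}_S(-D))\to H^1(\mathcal{O}_S(-D+B))$ has nontrivial kernel (equivalently $H^0(\mathcal{O}_B(B-D))\neq 0$ contributes), choose one maximal with respect to $H^1(-D+B)$, or rather choose $B$ maximal so that $H^1(-D+B)=0$ while remaining effective and nonzero. Equivalently, run the construction iteratively: if $H^1(-(D-B))\ne0$ and $D-B$ is still big, apply the argument to $D-B$ to get $B'$ and replace $B$ by $B+B'$. Termination holds since $D\cdot H$ decreases for an ample $H$. Once $H^1(-D+B)=0$, condition (1) should come from a Ramanujam-type connectedness argument. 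If $(D-B)C>0$ for a component $C\subseteq B$, then $\mathcal{O}_C(B-D)$ has negative degree. The sequence $0\to\mathcal{O}(-D+B-C)\to\mathcal{O}(-D+B)\to\mathcal{O}_C(-D+B)\to0$ then lets one replace $B$ by $B-C$ without losing the needed properties, which contradicts minimality among admissible choices. To make this consistent, choose $B$ minimal subject to $H^1(-D+B)=0$ and $H^0(\mathcal{O}_B(B-D))\neq0$.

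\textbf{Main obstacle.} The hardest step is to make conditions (1)--(3) hold simultaneously. Bogomolov's theorem naturally gives (1) and (3), while (2) requires the extremal choice, and it is not clear that the extremal modification preserves bigness of $D-2B$. I would first try to use the Bogomolov subsheaf as the maximal destabilizing one, i.e.\ the Harder--Narasimhan filtration. Then $H^1(-(D-B))\neq0$ would produce a further extension violating maximality, which yields (2) directly without modifying $B$.
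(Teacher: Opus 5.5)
There is a genuine gap, and it sits exactly where the content of the theorem lies: you never produce a $B$ satisfying (2) and (3) at the same time. Your first step is essentially the paper's proof of Theorem \ref{thm:Effective_Sakai}. It gives only $(D-B)B\le 0$, which is weaker than the componentwise condition (1), together with bigness of $D-2B$. The paper points out that this route does not directly yield $H^1(S,\mathcal{O}_S(-D+B))=0$.

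Your remedy through the Harder--Narasimhan filtration fails. A nonzero $\eta\in H^1(S,\mathcal{O}_S(-D+B))=\mathrm{Ext}^1(\mathcal{O}_S(D-B),\mathcal{O}_S)$ defines an extension that has no relation to $E$, so it cannot contradict maximality of $\mathcal{O}_S(D-B)\subset E$. The lift $\mathcal{O}_S(D-B)\hookrightarrow E$ only shows that the particular class $\xi$ of $E$ dies in $H^1(S,\mathcal{O}_S(-D+B))$, i.e.\ that it comes from $H^0(\mathcal{O}_B(B-D))$. It says nothing about the whole group. Your paragraph on $D^2\le 0$ is likewise only a hope, since Bogomolov gives nothing there.

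The minimality refinement is the right mechanism; it is the argument of Lemma \ref{lem:Miyaoka}. But it must be run below a fixed anchor $B_0\ge 0$ with $H^1(S,\mathcal{O}_S(-D+B_0))=0$, and the anchor must also control the sign of $D-2B$. Condition (1) only gives $(D-2B)^2=D^2-4(D-B)B\ge D^2>0$, so one of $\pm(D-2B)$ is big. To decide which, you need a nef and big class orthogonal to $B$.

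Minimizing without such an anchor, as you propose, fails. Take $A$ very ample and $m\gg 0$. An irreducible $B\in|mA|$ is minimal with $H^1(S,\mathcal{O}_S(-D+B))=0$ and $H^0(\mathcal{O}_B(B-D))\ne 0$, and it satisfies (1), yet $D-2B$ is not big. The same example shows that (1)--(2) alone are cheap, so for $D^2\le 0$ the statement as written is immediate. Everything hinges on combining (2) with (3).

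The route the paper points to (Sakai's, via Lemma \ref{lem:Miyaoka}) takes the anchor from Kawamata--Viehweg vanishing. Let $D=P+N$ be the Zariski decomposition. Then $H^1(S,\mathcal{O}_S(-\lceil P\rceil))=0$ and $D-\lceil P\rceil=\lfloor N\rfloor$. A minimal $B\le\lfloor N\rfloor$ with $H^1(S,\mathcal{O}_S(-D+B))=0$ is nonzero and satisfies (1) and (2). Moreover $P\cdot B=0$ gives $(D-2B)\cdot P=P^2>0$, hence (3). No Bogomolov is needed.

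Your iteration could alternatively be completed, but only with arguments you do not give:
\begin{enumerate}
\item Each $D_i=\tfrac12\bigl(D_{i-1}+(D_{i-1}-2B_i)\bigr)$ stays in the positive cone, so Bogomolov keeps applying.
\item Since $D_i\cdot H$ strictly decreases, the process stops at some $B_0$ with $H^1(S,\mathcal{O}_S(-D+B_0))=0$.
\item For the minimal $B\le B_0$, the divisor $D-B\ge D-B_0$ is big.
\item Condition (1) forces the positive part $P'$ of $D-B$ to satisfy $P'\cdot C=0$ for every component $C$ of $B$, whence $(D-2B)\cdot P'=P'^2>0$.
\end{enumerate}
As submitted, the proposal contains neither the anchor nor this orthogonality argument. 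The step you label the main obstacle is left open.
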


Our primary interest lies in studying the equivalence of  Bogomolov's instability theorem, the Miyaoka-Sakai theorem, and the Kawamata-Viehweg vanishing theorem in positive characteristic. 
Following Sakai's approach \cite{Sakai1990}, one can deduce a partial version of the Miyaoka-Sakai theorem (Theorem \ref{thm:Effective_Sakai}) as well as Reider-type results (Theorem \ref{thm:ReiderType}) from the effective version of Bogomolov's instability theorem (Theorem \ref{thm:Koseki}). 
However, this method does not yield the vanishing of $H^1(S, \mathcal{O}_S(-D + B))$ directly, which is a crucial ingredient in Di Cerbo's proof \cite{DiCerbo2013} of Bogomolov's instability theorem via the Miyaoka-Sakai theorem.
The proof of this vanishing in \cite{DiCerbo2013} suggests that, in positive characteristic, a Kawamata-Viehweg-type vanishing theorem is necessary to establish the full Miyaoka-Sakai theorem. Indeed, it suffices to find a special subdivisor $Q\le D$ such that $H^1(S, \mathcal{O}_S(-Q))=0$ (see Lemma \ref{lem:Miyaoka}). 

Our second objective is to identify surfaces in positive characteristic on which the Miyaoka-Sakai theorem can be proved without relying on Bogomolov's instability theorem.

Recently, Enokizono proved a vanishing theorem \cite[Theorem 4.17]{Enokizono2023} for the integral Zariski decomposition of big divisors. A natural candidate for such a special subdivisor $Q\le D$ is the $\mathbb{Z}$-positive part $P_{\mathbb{Z}}$ of $D$, provided that $\dim H^0(S, \mathcal{O}_S(P_{\mathbb{Z}}))>\dim H^1(S, \mathcal{O}_S)_n$, where 
$$H^1(S, \mathcal{O}_S)_n = \ker\left(H^1(S,\mathcal{O}_S)\to H^1(S, F^e_*\mathcal{O}_S)\right), \quad e\gg 0$$ denotes the Frobenius nilpotent part of $H^1(S,\mathcal{O}_S)$. 

On smooth Frobenius split surfaces, Enokizono's condition is satisfied if $P_{\mathbb{Z}}$ is effective. 
Based on this observation, we show that Kawamata-Viehweg vanishing holds for effective, nef and big $\mathbb{Q}$-divisors on smooth Frobenius split surfaces (Proposition \ref{prop:D>=0_KV_FSplit}).

Without the effectiveness hypothesis of $P_{\mathbb{Z}}$, we prove that the Kawamata-Viehweg vanishing theorem continues to hold on Hirzebruch surfaces and smooth del Pezzo surfaces. Consequently, the Miyaoka-Sakai theorem also holds on them. Additionally, we establish the Miyaoka-Sakai theorem on Frobenius split ruled surfaces and smooth weak del Pezzo surfaces. Similar results can also be obtained on minimal surfaces of Kodaira dimension zero.

For smooth projective surfaces that are neither of general type nor quasi-elliptic surface of Kodaira dimension one,  inspired by Mukai's proof of the Mumford-Ramanujam vanishing theorem, that is, $H^1(S, \mathcal{O}(-D)) = 0$ if $D$ is nef and big (see \cite[the Theorem]{Ramanujam1974} and \cite[Section II]{Mumford1978}), we obtain a $\mathbb{Q}$-divisor version of the Miyaoka-Sakai Theorem. This version also implies the Mumford-Ramanujam vanishing theorem.

In summary, our investigation shows the following implications:
    $$\begin{array}{c} \text{Effective Bogomolov's instability theorem (Theorem \ref{thm:Koseki})} \\ \Downarrow \\ \text{Effective Miyaoka-Sakai theorem (Theorem \ref{thm:Effective_Sakai})} \\ \Downarrow \\ \text{Effective Ramanujam vanishing theorem (Corollary \ref{cor:NCVanishing})} \\ \Downarrow \\ \text{Effective Mumford-Ramanujam vanishing (Corollary \ref{cor:Effective_Mumford})} \end{array},$$
$$
\text{Kawamata-Viehweg vanishing theorem}
\Rightarrow
\text{Miyaoka-Sakai theorem}
\Rightarrow
\text{Bogomolov's instability theorem},
$$
and
$$\text{Weak Miyaoka-Sakai theorem (Corollary \ref{cor:weakMS})} \Longrightarrow \text{Mumford-Ramanujam vanishing (Corollary \ref{cor:MRVanishing})}.$$

The paper is organized as follows. In Section \ref{sec:preliminaries}, we fix some notations and review results on Bogomolov's instability theorem. 
In Section \ref{sec:Bogomolov-to-others}, we present implications of the effective Bogomolov's instability theorem (see Theorem \ref{thm:Koseki}), including Reider-type results. In Section \ref{sec:MiyaokaVanishingToBogmolov}, we show that the Miyaoka-Sakai vanishing theorem implies Bogomolov's instability theorem. In Section \ref{sec:vanishing_Kodaira<=0}, we show that the Kawamata-Viehweg vanishing theorem holds on Hirzebruch and del Pezzo surfaces, while the Miyaoka-Sakai theorem holds on more surfaces of Kodaira dimension at most zero. In Section \ref{sec:MiyaokaSakai=MumfordRamanujam}, we prove a $\mathbb{Q}$-divisor variant of the Miyaoka-Sakai theorem and show that it implies the Mumford-Ramanujam vanishing on surfaces that is neither of general type nor quasi-elliptic of Kodaira dimension one.

\section{Preliminaries}\label{sec:preliminaries}

Throughout the paper, we denote by $S$ a smooth projective surface over an algebraically closed field $\mathbf{k}$ of characteristic $p$ and by $K_S$ the canonical divisor on $S$. Unless explicitly indicated, a divisor means an integral Cartier divisor.

Bogomolov's instability theorem (\cite[10.12. Corollary 2]{Bogomolov1979}) has been extensively studied and generalized to positive characteristic, see \cite[Theorem 12]{Shepherd-Barron1991}, 
\cite[Theorem 7.1]{Langer2016},
\cite[Theorem 1.1]{Koseki2023}. We first recall the definition of slope stability.

\begin{definition}
Let $\mathcal{E}$ be a locally free sheaf of rank $r$ and $H$ a numerically non-trivial nef divisor on $S$. The \textbf{slope} of $\mathcal{E}$ with respect to $H$ is defined as
$$\mu_H(\mathcal{E})=\frac{c_1(\mathcal{E})\cdot H}{r}.$$

We say that $\mathcal{E}$ is \textbf{slope $H$-semistable} if
    $$\mu_H(\mathcal{F}) \le \mu_H(\mathcal{E})$$
    for any subsheaf $\mathcal{F}\subset \mathcal{E}$ with $0<\operatorname{rank}(\mathcal{F}) < \operatorname{rank}(\mathcal{E})$.  

We say $\mathcal{E}$ is \textbf{slope $H$-unstable} if it is not slope $H$-semistable.
\end{definition}

In positive characteristic, for Bogomolov's instablity theorem to hold on a surface of general type or a quasi-elliptic surface of Kodaria dimenion 1, it is known that a correction term in the inequality is needed.

Let $C_S$ be a nonnegative integer that is defined as follows:
\begin{itemize}
    \item if $\kappa(S)=2$ and $p > 2$, then
    $$C_S := \min\left\{\dfrac{\operatorname{Vol}(S)}{4}, 2 + 5 \operatorname{Vol}(S) -\chi(\mathcal{O}_{S})\right\},$$
    where $\operatorname{Vol}(S)=\limsup\limits_{m\to \infty}\frac{\dim H^0(S, \mathcal{O}_S(m K_S))}{m^2/2!}$ is the volume of $S$;
    \item if $\kappa(S)=2$ and $p = 2$, then
    $$
    C_S := \min\left\{\dfrac{\max\{\operatorname{Vol}(S), \operatorname{Vol}(S)-3\chi(\mathcal{O}_{S})+2\}}{4}, 2 + 5 \operatorname{Vol}(S) -\chi(\mathcal{O}_{S})\right\}; 
    $$
    \item if $\kappa(S)=1$, $S$ is quasi-elliptic, $p>0$, and $\chi(\mathcal{O}_S)\le 0$,
    then
    $$
    C_S := 2  - \chi(\mathcal{O}_S); 
    $$
    \item otherwise, $C_S = 0$,
\end{itemize}
Note that in the definition of $C_S$, $\operatorname{Vol}(S)=K_{S'}^2$, where $S'$ is the minimal model of $S$.

\begin{theorem}[{
\cite[10.12. Corollary 2]{Bogomolov1979}, 
\cite[Theorem 12]{Shepherd-Barron1991}, 
\cite[Theorem 7.1]{Langer2016},
\cite[Theorem 1.1]{Koseki2023}
}]\label{thm:Koseki}
    Let 
    $\mathcal{E}$ be a locally free sheaf of rank $2$ on $S$. If
     $$ c_1(\mathcal{E})^2 - 4 c_2(\mathcal{E}) > 
     4C_S,
     $$
     then $\mathcal{E}$ is slope $H$-unstable with respect to a certain numerically non-trivial nef divisor $H$. 
\end{theorem}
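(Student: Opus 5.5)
This statement is really a compilation of known results, so the plan is to assemble it from the cited sources rather than reprove Bogomolov's theorem. I split according to the definition of $C_S$.

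\textbf{Case $C_S = 0$ outside the pathological classes.} Here $S$ is neither of general type nor quasi-elliptic of Kodaira dimension one with $\chi(\mathcal{O}_S)\le 0$. I would follow Shepherd-Barron's argument \cite[Theorem 12]{Shepherd-Barron1991}.
\begin{enumerate}
\item Let $\mathcal{E}$ be of rank $2$ with $\Delta(\mathcal{E}) = c_1(\mathcal{E})^2-4c_2(\mathcal{E})>0$. By Riemann--Roch applied to $S^m\mathcal{E}\otimes\det\mathcal{E}^{-m/2}$ (after twisting so that the determinant is even), $h^0+h^2$ of these sheaves grows like $m^3\Delta/12$.
\item By Serre duality and a bound on $h^2$, one obtains sections of $S^m\mathcal{E}(-mc_1/2 - \epsilon H)$ for an ample $H$.
\item These sections give a curve in $\mathbb{P}(\mathcal{E})$ that violates strong semistability.
\item Hence some Frobenius pullback $F^{e*}\mathcal{E}$ is unstable. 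Its maximal destabilizing subsheaf lies in the positive cone, which gives Bogomolov's instability for $F^{e*}\mathcal{E}$ relative to some nef $H$.
\item Descend the destabilizing subsheaf to $\mathcal{E}$.
\end{enumerate}

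\textbf{Main obstacle: the descent in step 5.} Descending along Frobenius is exactly where the surface type enters. If the destabilizing subsheaf of $F^*\mathcal{E}$ does not descend, a Cartier-connection argument gives a nonzero map $T_S\to \mathcal{H}om(L,F^*\mathcal{E}/L)$. This forces $T_S$ to have a subsheaf of large positive slope, that is, $\Omega_S$ fails to be generically semipositive. By Shepherd-Barron's analysis, this cannot happen outside general type and quasi-elliptic surfaces, or in the remaining range covered by \cite[Theorem 7.1]{Langer2016}.

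\textbf{Case $C_S>0$.} I would quote Koseki's bound \cite[Theorem 1.1]{Koseki2023}, which is Langer's method with explicit constants. The failure of descent is controlled by the maximal destabilizing slope of $\Omega_S$, denoted $L_{\max}$. Langer's inequality has the shape $\Delta(\mathcal{E})\le (\text{const})\cdot L_{\max}^2$, or the analogous bound with the correction terms above. The constants are then estimated as follows.
\begin{enumerate}
\item On general type surfaces, bound $L_{\max}^2$ by $\operatorname{Vol}(S)$ using the Hodge index theorem and $K_S = c_1(\Omega_S)$. This gives $\operatorname{Vol}(S)/4$. When $p=2$, there is an extra term $\operatorname{Vol}(S) - 3\chi(\mathcal{O}_S)+2$ coming from $c_2$.
\item Alternatively, use Noether-type bounds, which give $2+5\operatorname{Vol}(S)-\chi(\mathcal{O}_S)$.
\item For quasi-elliptic surfaces, compute through the foliation given by the cusp fibration, which gives $2-\chi(\mathcal{O}_S)$.
\end{enumerate}
Taking the minimum of the available bounds gives $4C_S$, so $\Delta>4C_S$ forces instability. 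I expect the step needing most care to be checking that $H$ can be taken nef and numerically nontrivial rather than ample. This should come from taking $H$ on the boundary of the positive cone determined by $c_1$ of the destabilizing sheaf.
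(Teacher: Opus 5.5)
The paper does not prove this statement. It is quoted from \cite{Bogomolov1979, Shepherd-Barron1991, Langer2016, Koseki2023}, with $C_S$ taken from those references, so your plan to assemble it from the literature matches the paper.

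\textbf{The $C_S=0$ sketch has a false step.} Replace $\mathcal{E}$ by a suitable Frobenius pullback. Suppose $\mathcal{O}_S(L)\subset F^*\mathcal{E}$ is Bogomolov-destabilizing, with quotient $\mathcal{I}_Z(M)$, and does not descend. Then the second fundamental form of the Cartier connection is a nonzero map $\mathcal{O}_S(L)\to\mathcal{I}_Z(M)\otimes\Omega_S$. This gives a line subsheaf $\mathcal{O}_S(L-M)\hookrightarrow\Omega_S$ with $(L-M)^2=\Delta(F^*\mathcal{E})+4\deg Z>0$ and $(L-M)\cdot H>0$. So the obstruction to descent is a \emph{big} line subsheaf of $\Omega_S$.

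You weaken this to ``$\Omega_S$ is not generically semipositive'' and claim this cannot happen outside general type and quasi-elliptic surfaces. That claim is false. Generic semipositivity of $\Omega_S$ fails on $\mathbb{P}^2$. It also fails on every geometrically ruled surface $S\to C$, where the quotient $\Omega_S\to\omega_{S/C}$ has negative degree. Yet on these surfaces the theorem holds with $C_S=0$, so your argument breaks exactly where the result is classical.

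The information to keep is the bigness of $L-M$. For instance, take a fibration whose general fibre $F$ is smooth of genus $\le 1$. The argument in the proof of Lemma~\ref{lem:mukai} shows that every line subsheaf $\mathcal{O}_S(A)\subset\Omega_S$ then has $A\cdot F\le 0$, so $A$ cannot be big. This is the step that breaks for a quasi-elliptic fibration, whose general fibre is singular.

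\textbf{Smaller points.}
\begin{enumerate}
\item Your account of the constants when $C_S>0$ does not follow from the tools you name. The identity $c_1(\Omega_S)=K_S$ and the Hodge index theorem give no upper bound on the slope of a line subsheaf $\mathcal{O}_S(A)\subset\Omega_S$, since nothing forces the quotient $\mathcal{I}_W(K_S-A)$ to have non-negative degree. In characteristic $p$, $\Omega_S$ can even contain ample line bundles: wherever Kodaira vanishing fails, Mukai's description of $H^0(S,\mathcal{B}_S(-D))$ produces $\mathcal{O}_S(p^kD)\hookrightarrow\Omega_S$. So $L_{\max}^2\le\operatorname{Vol}(S)$ cannot be obtained this way. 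Since you quote \cite[Theorem 1.1]{Koseki2023} anyway, drop the reconstruction.
\item The step you single out as most delicate is empty. The cited results give instability, or the inequality for semistable sheaves, with respect to an arbitrary ample $H$. Their contrapositive makes $\mathcal{E}$ unstable for every ample $H$, and an ample $H$ is already a numerically non-trivial nef divisor. Moreover, for rank $2$ and $\Delta>0$, the computation above shows that any destabilizing $\mathcal{O}_S(L)$ has $L-M$ big. Hence it destabilizes with respect to every numerically non-trivial nef class (compare the proof of Theorem~\ref{thm:Effective_Sakai}). No choice of $H$ on the boundary of the positive cone is needed.
\item The Riemann--Roch growth for the normalized tautological bundle is $m^3\Delta/24$, not $m^3\Delta/12$.
\item The sections cut out divisors, not curves, in $\mathbb{P}(\mathcal{E})$.
\end{enumerate}
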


\section{From Bogomolov to Miyaoka-Sakai and then Mumform-Ramanujam\label{sec:Bogomolov-to-others}}

Following Mumford-Reider's method as seen in \cite[Section 3]{Sakai1990}, we obtain the following effective version of Miyaoka-Sakai's Theorem, which was first proved in \cite{Miyaoka1980} using the Miyaoka vanishing theorem and Zariski decomposition. A slightly different proof was also given by Sakai in \cite{Sakai1990}.

\begin{theorem}[Effective Miyaoka-Sakai Theorem]\label{thm:Effective_Sakai}
    Let $D$ be a big divisor on $S$ such that $D^2>4C_S$. Suppose that $H^1(S, \mathcal{O}_S(-D))\ne 0$. Then there exists a nonzero effective divisor $B$ such that
    \begin{enumerate}
        \item $(D-B)B\le 0$ \label{item:(D-B)B>=0}
        \item $D-2B$ is big and $(D- 2B)^2\ge D^2$. \label{item:D-2Bbig}
    \end{enumerate}
\end{theorem}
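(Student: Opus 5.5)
The plan is to follow the Mumford–Reider method as in \cite[Section 3]{Sakai1990}: turn the nonzero class in $H^1(S,\mathcal{O}_S(-D))$ into a non-split rank $2$ extension, apply Theorem \ref{thm:Koseki} to it, and read off $B$ from the destabilizing subsheaf.

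First, since $H^1(S,\mathcal{O}_S(-D)) \cong \operatorname{Ext}^1(\mathcal{O}_S(D),\mathcal{O}_S)$, a nonzero class gives a non-split extension
$$0\to \mathcal{O}_S\to \mathcal{E}\to \mathcal{O}_S(D)\to 0.$$
Here $\mathcal{E}$ is locally free of rank $2$, with $c_1(\mathcal{E})=D$ and $c_2(\mathcal{E})=0$. Hence
$$c_1(\mathcal{E})^2-4c_2(\mathcal{E})=D^2>4C_S.$$
Theorem \ref{thm:Koseki} then provides a numerically nontrivial nef divisor $H$ for which $\mathcal{E}$ is slope $H$-unstable. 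Take a saturated destabilizing line subbundle $\mathcal{O}_S(M)\subset\mathcal{E}$, so that:
\begin{itemize}
\item $M\cdot H> D\cdot H/2$;
\item $\mathcal{E}/\mathcal{O}_S(M)\cong \mathcal{I}_Z(D-M)$ for a zero-dimensional subscheme $Z$.
\end{itemize}
Since $D$ is big, write $D\equiv A+N$ with $A$ ample and $N$ effective. Then $D\cdot H\ge A\cdot H>0$, because $H$ is nef and numerically nontrivial.

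Next, consider the composite $\mathcal{O}_S(M)\to\mathcal{O}_S(D)$.
\begin{itemize}
\item If it were zero, then $\mathcal{O}_S(M)\subset\mathcal{O}_S$. This gives $M\cdot H\le 0$, contradicting $M\cdot H>D\cdot H/2>0$.
\item So it is nonzero, and $B:=D-M$ is effective.
\item If $B=0$, the composite is an isomorphism and splits the extension, a contradiction. Hence $B\neq 0$.
\end{itemize}

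Then compute $c_2$ from the filtration. We get
$$0=c_2(\mathcal{E})=M(D-M)+\deg Z,$$
so $(D-B)B=M\cdot B=-\deg Z\le 0$, which is item (1). For item (2),
$$(D-2B)^2=D^2-4(D-B)B\ge D^2>0,$$
and $(D-2B)\cdot H=(2M-D)\cdot H>0$.

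The step needing the most care is concluding that $D-2B$ is big. I would use Riemann–Roch: for $L=D-2B$ with $L^2>0$,
$$h^0(mL)+h^0(K_S-mL)\ge \tfrac{m^2}{2}L^2+O(m).$$
Because $L\cdot H>0$ and $H$ is nef, $(K_S-mL)\cdot H<0$ for $m\gg0$, so $K_S-mL$ is not effective. Hence $h^0(mL)$ grows quadratically, and $L$ is big. The only remaining subtlety is the saturation and the standard form of the quotient $\mathcal{I}_Z(D-M)$, which holds on a smooth surface because a saturated subsheaf of a locally free sheaf has torsion-free quotient.
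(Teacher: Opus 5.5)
Your proof is correct and follows essentially the same route as the paper: the non-split extension, Theorem \ref{thm:Koseki}, the saturated destabilizing line subbundle, the $c_2$ computation giving $(D-B)B=-\deg Z\le 0$, and the composite-map and splitting arguments showing $B$ is effective and nonzero. The only minor differences are in how you get positivity of $D-2B$. You take $(2M-D)\cdot H>0$ directly from strict instability and prove bigness by Riemann--Roch against the nef $H$. The paper instead uses the Hodge index theorem to get $(A-B)H>0$, perturbs $H$ to be ample, and cites \cite[Proposition 2.2]{Badescu2001}.
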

\begin{proof}
Since $H^1(S, \mathcal{O}_S(-D))\ne 0$, then there is a non-split exact sequence
    \begin{equation}\label{eq:non-split-seq}
        0\to \mathcal{O}_S \to \mathcal{E} \to \mathcal{O}_S(D)\to 0,
    \end{equation}
    where $\mathcal{E}$ is locally free sheaf of rank $2$ and $c_1(\mathcal{E}) = \mathcal{O}_S(D)$.
    Because
    $4c_2(\mathcal{E}) - c_1(\mathcal{E})^2 = D^2 > 4 C_S$,
    by Theorem \ref{thm:Koseki}, $\mathcal{E}$ is slope $H$-unstable with respect to a certain numerically non-trivial nef divisor $H$. 
    Then $\mathcal{E}$ has a maximal destabilizing subsheaf which is locally free of rank $1$. Let $A$ be the divisor associated to the maximal destabilizing subsheaf. 
    There exists an exact sequence
    \begin{equation}\label{eq:destablizing-seq}
    0\to \mathcal{O}_S(A) \to \mathcal{E}\to \mathcal{I}_Z\otimes \mathcal{O}_S(B)\to 0,
    \end{equation}
    where $B$ is also a divisor. 
    Moreover,
    \begin{equation}\label{eq:destable-ineq}
    (A - B) H = 2(\mu_H(\mathcal{O}_S(A)) - \mu_H(\mathcal{E})) \ge 0. 
    \end{equation}

By twisting the short exact sequence \eqref{eq:destablizing-seq} by $\mathcal{O}_S(-A)$, we see that $c_2(\mathcal{E}(-A)) = \deg Z \ge 0$. On the other hand, we have
    \begin{equation}\label{eq:AB<0}
        \begin{aligned}
            c_2(\mathcal{E}(-A)) = &c_2(\mathcal{E}) + c_1(\mathcal{E})c_1(\mathcal{O}_S(-A)) + c_1(\mathcal{O}_S(-A))^2\\
            = & -A(A + B)+(-A)^2\\
            = &- AB \\
            = & -(D - B)B.
        \end{aligned}
    \end{equation}
Comparing the calculations for $c_2(\mathcal{E})$, we see that $(D - B)B = AB = -\deg Z \le 0$ which confirms part \eqref{item:(D-B)B>=0}. Moreover, this inequality also implies that
    \begin{equation}
        \label{eq:(A-B)^2>0}
    (A - B)^2 = (A + B)^2 - 4AB = D^2 - 4AB \ge D^2 > 4C_S >0. 
    \end{equation}

If $(A  - B)H = 0$, then by the Hodge index theorem (see \cite[Corollary 2.4]{Badescu2001}) and the nefness of $H$, we see that $H^2 = 0$ and $H$ is numerically trivial. This contradicts the assumption on $H$.
Therefore, $(A - B)H > 0$. By perturbing $H$ a little bit, we may assume that $H$ is ample. By \cite[Proposition 2.2]{Badescu2001}, we know that $A-B=D-2B$ is a big divisor. Part \eqref{item:D-2Bbig} is concluded.

To show that $B$ is effective and non-trivial, consider the following commutative diagram
    $$
     \begin{tikzcd}[row sep=1.5em, column sep=1em]
        & & 0 \arrow[d] & & \\
        & & \mathcal{O}_S \arrow[d, "p"] \arrow[dr, "\beta=g\circ p"] & & \\
        0 \arrow[r] & \mathcal{O}_S(A) \arrow[dr, "\alpha=q\circ f"'] \arrow[r, "f"] & \mathcal{E} \arrow[r, "g"] \arrow[d, "q"] & \mathcal{I}_Z \otimes \mathcal{O}_S(B) \arrow[r] & 0 \\
        & & \mathcal{O}_S(D) \arrow[d] & & \\
        & & 0 & &
    \end{tikzcd}.
    $$    
We claim that the composition $\alpha: \mathcal{O}_S(A)\overset{f}{\to} \mathcal{E}\overset{q}{\to} \mathcal{O}_S(D)$ is nontrivial. Assume on the contrary that $q \circ f$ is trivial. Because the vertical sequence is exact, there exists a nontrivial morphism $\phi: O_S(A)\to \mathcal{O}_S$.
    It follows that $-A$ is effective, and hence $(A - B)H = (2A - D)H=2AH - DH \le 0$ which contradicts to that $(A - B)H > 0$.
    Therefore, $\alpha$ is the natural inclusion and $B = D - A$ is effective.
    
    We claim that $B\ne 0$. Otherwise, we have $A=D$, $\deg Z =0$, and the sequence \eqref{eq:destablizing-seq} becomes
    $$0\to \mathcal{O}_S(D) \to \mathcal{E} \to \mathcal{O}_S\to 0$$
    which splits the exact sequence \eqref{eq:non-split-seq}. That contradicts the assumption that the sequence \eqref{eq:non-split-seq} is non-split.
\end{proof}

Theorem \ref{thm:Effective_Sakai} leads to an effective version of the Ramanujam vanishing theorem on numerically connected big divisors which implies an effective version of Mumford-Ramanujam vanishing theorem.

A pseudoeffective divisor $D$ on a smooth surface $S$ is \textbf{numerically connected} if $AB>0$ for any nontrivial pseudoeffective decomposition $D = A + B$. The set of numerically connected divisors includes nef and big divisors.

\begin{corollary}[Effective Ramanujam Vanishing Theorem]\label{cor:NCVanishing}
   Let $D$ be a pseudoeffective divisor on $S$ such that $D^2>4C_S$. If $D$ is numerically connected, then $H^1(S, \mathcal{O}_S(-D))=0$.
\end{corollary}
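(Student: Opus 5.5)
We argue by contradiction and reduce to Theorem \ref{thm:Effective_Sakai}. Suppose $H^1(S,\mathcal{O}_S(-D))\neq 0$. Before we can invoke Theorem \ref{thm:Effective_Sakai} we must know that $D$ is big. Since $D$ is pseudoeffective and $D^2>4C_S\ge 0$, bigness follows from the standard criterion on surfaces: by Riemann--Roch, a pseudoeffective divisor with positive self-intersection is big (cf.\ \cite[Proposition 2.2]{Badescu2001}, as used in the proof of Theorem \ref{thm:Effective_Sakai}). So Theorem \ref{thm:Effective_Sakai} applies and produces a nonzero effective divisor $B$ such that $(D-B)B\le 0$ and $D-2B$ is big.

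Set $A:=D-B$. We claim that $D=A+B$ is a nontrivial pseudoeffective decomposition. The summand $B$ is nonzero and effective, hence pseudoeffective and not numerically trivial; if it were numerically trivial, then $BH=0$ for an ample $H$, which is impossible for a nonzero effective divisor. The other summand can be written as
$$A=D-B=(D-2B)+B,$$
a sum of a big divisor and an effective divisor, so $A$ is big and in particular pseudoeffective and nontrivial.

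Numerical connectedness of $D$ now gives $AB>0$, that is, $(D-B)B>0$. This contradicts part \eqref{item:(D-B)B>=0} of Theorem \ref{thm:Effective_Sakai}, which says $(D-B)B\le 0$. Hence $H^1(S,\mathcal{O}_S(-D))=0$.

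The main obstacle is matching the precise meaning of ``nontrivial'' in the definition of numerical connectedness, specifically whether it requires both summands to be nonzero or not numerically trivial. The conclusion from Theorem \ref{thm:Effective_Sakai} handles either reading. $B$ is a nonzero effective divisor, so it is not numerically trivial. $A$ is big, so it is not numerically trivial either.
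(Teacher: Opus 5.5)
Your proof is correct and follows the paper's route. You deduce bigness of $D$ from pseudoeffectivity and $D^2>0$, apply Theorem \ref{thm:Effective_Sakai}, and observe that $D=(D-B)+B$ is a nontrivial pseudoeffective decomposition with $(D-B)B\le 0$; you also spell out why $D-B=(D-2B)+B$ is pseudoeffective, which the paper leaves implicit.
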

\begin{proof}
    By Zariski decomposition, $D$ is big. If $H^1(S, \mathcal{O}_S(-D))\ne 0$, by Theorem \ref{thm:Effective_Sakai}, there would exist a nonzero decomposition $D=A+B$ of pseudoeffective divisors such that $AB\le 0$, which contradicts the assumption that $D$ is numerically connected.
\end{proof}

\begin{lemma}[{Ramanujam's Connectedness Lemma (see \cite[Proof of Theorem]{Ramanujam1974}, \cite[Lemma 1.3]{Kawachi1998}, or \cite[Lemma 2]{Sakai1990})}]\label{lem:Nef=>NumCon}
Let $D$ be a nef and big divisor on $S$ and $D = M + N$ be a decomposition of pseudo-effective divisors. If $MN\le 0$, then either $M$ or $N$ is numerically trivial.
\end{lemma}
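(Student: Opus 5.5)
We will prove Lemma \ref{lem:Nef=>NumCon} using only the Hodge index theorem and the nefness of $D$. We may assume $D^2>0$ and that neither $M$ nor $N$ is numerically trivial, and derive a contradiction from $MN\le 0$.

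\emph{Step 1: both $DM\ge 0$ and $DN\ge 0$.} Since $D$ is nef and $M,N$ are pseudoeffective, we have $DM\ge 0$ and $DN\ge 0$. Moreover
$$D^2 = DM + DN = M^2 + 2MN + N^2.$$

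\emph{Step 2: reduce to the case $DM>0$ and $DN>0$.} Suppose instead $DM=0$. Since $D^2>0$, the Hodge index theorem gives $M^2\le 0$, with equality only when $M\equiv 0$. Because $M$ is not numerically trivial, $M^2<0$. Then
$$MN = MD - M^2 = -M^2 > 0,$$
contradicting $MN\le 0$. The case $DN=0$ is symmetric. Hence $DM>0$ and $DN>0$.

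\emph{Step 3: the key inequality.} Now $M^2 = DM - MN \ge DM > 0$, and likewise $N^2 \ge DN > 0$. Since $M^2>0$, the Hodge index theorem applied to $M$ and $N$ gives $(MN)^2 \ge M^2N^2$, which is strictly positive. Meanwhile, $M$ and $N$ have positive square and positive intersection with the nef and big divisor $D$, so both lie in the positive cone on the same side as $D$. Two classes in the same component of the positive cone have strictly positive intersection, so $MN>0$. This contradicts $MN\le 0$.

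To justify $MN>0$ without invoking the cone: by the reverse Cauchy--Schwarz inequality, $MN$ is nonzero, so it suffices to exclude $MN<0$. This follows from the standard fact that the set $\{x : x^2>0,\ xD>0\}$ is a convex cone on which the intersection form is positive.

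\emph{Main obstacle.} The delicate point is Step 3, namely ensuring the sign of $MN$ rather than just $|MN|$. The most economical way to handle it is to note that $M^2>0$ and $MD>0$, so $N\cdot M\ge 0$ because $N$ is pseudoeffective and $M$ lies in the interior of the positive cone. The latter holds since the positive cone is dual to itself and contained in the dual of the pseudoeffective cone. Then $MN\le 0$ forces $MN=0$, while Hodge index gives $0=(MN)^2\ge M^2N^2>0$, which is the desired contradiction.
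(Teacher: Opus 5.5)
Steps 1--3 of your argument are correct. If $DM=0$ (or $DN=0$), Hodge index kills that case exactly as you say. Otherwise $M^2\ge DM>0$ and $N^2\ge DN>0$, so $M$ and $N$ lie in the component of the positive cone containing $D$, where $MN\ge\sqrt{M^2N^2}>0$.

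The paper gives no proof of its own and defers to Ramanujam, Kawachi--Ma\c{s}ek and Sakai. There the argument is likewise a Hodge-index computation. A common, slightly shorter way to organize it is to apply Hodge index to the pair $(D,M)$ instead of $(M,N)$:
\begin{itemize}
\item From $M^2=DM-MN\ge DM$ and Hodge index one gets $D^2\cdot DM\le D^2M^2\le (DM)^2$.
\item Since $D^2-DM=DN$, this says $DM\cdot DN\le 0$, so one of $DM,DN$ vanishes, say $DM=0$.
\item Then $M^2=-MN\ge 0$, and Hodge index forces $M\equiv 0$.
\end{itemize}
This route never needs the components of the positive cone.

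You should, however, delete the justification in your ``Main obstacle'' paragraph, because it is false. A class in the interior of the positive cone need not pair nonnegatively with pseudoeffective classes. The inclusions are $\mathrm{Nef}(S)\subseteq\overline{\mathrm{Pos}}(S)\subseteq\overline{\mathrm{Eff}}(S)$. So the closed positive cone contains the dual of the pseudoeffective cone (the nef cone), not the other way round.

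For a concrete failure, take the blowup of $\mathbb{P}^2$ at a point, with $H$ the pullback of a line and $E$ the exceptional curve. Then $M=2H+E$ has $M^2=3$ and $MH>0$, yet $M\cdot E=-1$ with $E$ effective.

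What actually gives $MN>0$ in your situation is that $N$ \emph{also} lies in the positive cone, since $N^2\ge DN>0$. That is precisely the reverse Cauchy--Schwarz argument you already gave earlier in Step 3, so your proof stands once that paragraph is removed.
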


\begin{corollary}[Effective Mumford-Ramanujam Vanishing Theorem]\label{cor:Effective_Mumford}
    Let $D$ be a nef divisor on $S$ such that $D^2>4C_S$. Then $H^1(S, \mathcal{O}_S(-D))=0$.
\end{corollary}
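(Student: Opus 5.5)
The plan is to deduce the statement from Corollary \ref{cor:NCVanishing} via Lemma \ref{lem:Nef=>NumCon}. First, $D$ is nef and $D^2>4C_S\ge 0$, so $D^2>0$. A nef divisor with positive self-intersection is big, by Riemann--Roch together with Serre duality: $h^0(mD)$ grows like $m^2D^2/2$, because $h^2(mD)=h^0(K_S-mD)$ vanishes for $m\gg0$ (its intersection with an ample class becomes negative). Hence $D$ is nef and big, and in particular pseudoeffective.

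Next, we check that $D$ is numerically connected. Let $D=A+B$ be a nontrivial pseudoeffective decomposition. By Lemma \ref{lem:Nef=>NumCon}, if $AB\le 0$ then $A$ or $B$ is numerically trivial. We interpret ``nontrivial'' in the definition of numerical connectedness as meaning that neither summand is numerically trivial; this is the convention under which the paper asserts that nef and big divisors are numerically connected. With that convention, $AB>0$ for every nontrivial decomposition, as required.

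Finally, Corollary \ref{cor:NCVanishing} applies to $D$, since $D$ is pseudoeffective, numerically connected, and satisfies $D^2>4C_S$. It gives $H^1(S,\mathcal{O}_S(-D))=0$.

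The one delicate point is the meaning of ``nontrivial.'' If it only meant $A,B\neq 0$ as divisors, a decomposition with one summand numerically trivial but nonzero would escape Lemma \ref{lem:Nef=>NumCon}. To avoid any gap, I would instead argue by contradiction using Theorem \ref{thm:Effective_Sakai} directly.

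Suppose $H^1(S,\mathcal{O}_S(-D))\ne0$. Theorem \ref{thm:Effective_Sakai} gives a nonzero effective $B$ with $D-2B$ big and $(D-B)B\le0$. Set $A=D-B$. Then $A=(D-2B)+B$ is big, hence pseudoeffective, and $B$ is effective, hence pseudoeffective. So Lemma \ref{lem:Nef=>NumCon} forces $A$ or $B$ to be numerically trivial.

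Neither option is possible:
\begin{enumerate}
\item $A$ cannot be numerically trivial, because it is big.
\item $B$ cannot be numerically trivial: it is nonzero and effective, so $BH>0$ for any ample $H$.
\end{enumerate}
This contradiction proves the vanishing. I expect no real obstacle beyond this bookkeeping.
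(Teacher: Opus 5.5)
Your proposal is correct and follows the paper's proof, which combines Lemma \ref{lem:Nef=>NumCon} (nef with $D^2>0$ gives numerical connectedness) with Corollary \ref{cor:NCVanishing}. Your fallback argument just unpacks the proof of Corollary \ref{cor:NCVanishing} through Theorem \ref{thm:Effective_Sakai}. Your reading of ``nontrivial'' as ``neither summand numerically trivial'' is the one the paper needs, and the unpacked version handles it directly, since $A=(D-2B)+B$ is big and $B>0$ is nonzero effective.
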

\begin{proof}
Since $D$ is nef and $D^2>0$, by Lemma \ref{lem:Nef=>NumCon}, $D$ is numerically connected. The conclusion then follows from Corollary \ref{cor:NCVanishing}.
\end{proof}

\begin{remark}
Effective versions of Mumford-Ramanujam vanishing theorem have been presented in several papers under certain conditions, see for example,
\cite[Theorem 1.6.]{Terakawa1999},
\cite[Theorem 3]{Mukai2013},
\cite[Corollary 5.9]{DiCerbo2015}, and
    \cite[Corollary 7.4]{Langer2016}.
\end{remark}

As another application of Theorem \ref{thm:Effective_Sakai}, we present Reider-type results for adjoint divisors in positive characteristic. 
We note that Theorem \ref{thm:ReiderType} is not entirely new. Related results have been studied in several works that include \cite{Larsen2025}, \cite{Enokizono2023}, \cite{Terakawa1999}, \cite{Nakashima1993b}. Here, we present a cohomological approach following the method of Sakai (\cite[Theorem 3]{Sakai1990}).

\begin{theorem}\label{thm:ReiderType}
    Let $D$ be a nef divisor on $S$.
    \begin{enumerate}
        \item If $D^2\ge 4C_S + 5$, then $K_S+D$ is basepoint-free unless there exists a divisor $B$ such that either
       $DB=1$ and $B^2=0$, or $DB=0$ and $B^2=-1$.
       \item If $D^2\ge 4C_S + 9$, then $K_S+D$ is very ample unless there exists a divisor $B$ such that one of the following condition holds:
    \begin{itemize}
    \item $DB = 0$, $B^2=-2$ or $B^2= -1$;
    \item $DB = 1$, $B^2=-1$ or $B^2 = 0$;
    \item $DB = 2$, $B^2=0$;
    \item $C_S=0$, $D^2=9$, and $D\equiv 3B$.
    \end{itemize}
    \end{enumerate}
\end{theorem}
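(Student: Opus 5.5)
Following Sakai's cohomological approach, I will reduce base point freeness and very ampleness of $K_S+D$ to the vanishing of an $H^1$ on a blow-up, and then apply Theorem \ref{thm:Effective_Sakai} there. For part (1), fix $x\in S$ and let $\pi\colon \tilde S\to S$ be the blow-up at $x$, with exceptional curve $E$. Then $K_S+D$ is free at $x$ if $H^1(S,\mathfrak m_x\otimes\mathcal O_S(K_S+D))=0$, and this group equals $H^1(\tilde S,\mathcal O_{\tilde S}(K_{\tilde S}+\pi^*D-2E))$. By Serre duality it suffices to show $H^1(\tilde S,\mathcal O_{\tilde S}(-\tilde D))=0$, where $\tilde D=\pi^*D-2E$. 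For part (2), take points $x,y$ (possibly infinitely near) and blow up to get $E_1,E_2$. Set $\tilde D=\pi^*D-2E_1-2E_2$, or, in the tangent-vector case, the analogous divisor on the two-step blow-up. Very ampleness then reduces to $H^1(\tilde S,\mathcal O(-\tilde D))=0$. In each case $\tilde D^2=D^2-4$ or $D^2-8$.

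One point needs care: the constant $C_{\tilde S}$ must agree with $C_S$. The volume, $\chi(\mathcal O)$, the Kodaira dimension and quasi-ellipticity are all birational invariants, so $C_{\tilde S}=C_S$. Hence $\tilde D^2\ge 4C_{\tilde S}+1>4C_{\tilde S}$, and $\tilde D$ is big because $\tilde D^2>0$ and $\pi^*D$ is nef and big. Suppose the relevant $H^1$ is nonzero. Theorem \ref{thm:Effective_Sakai} then gives a nonzero effective $\tilde B$ with $(\tilde D-\tilde B)\tilde B\le 0$ and with $\tilde D-2\tilde B$ big and $(\tilde D-2\tilde B)^2\ge\tilde D^2$.

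Next I turn this into numerical restrictions on $S$. Write $\tilde B=\pi^*B-aE$ (resp.\ $\pi^*B-a_1E_1-a_2E_2$) with $B=\pi_*\tilde B$ effective. Set $\tilde A=\tilde D-\tilde B$, so that $\tilde A\tilde B\le 0$ and $(\tilde A-\tilde B)^2\ge \tilde D^2>0$. Since $\tilde A-\tilde B$ is big, intersecting with the nef divisor $\pi^*D$ gives $D(D-2B)>0$, hence $DB<D^2/2$. The Hodge index theorem applied to $D$ and $B$ gives $D^2B^2\le (DB)^2$. Expanding $\tilde A\tilde B\le 0$ gives
\[ DB - B^2 \le a(2-a) \]
in case (1), and the sum of the analogous terms in case (2). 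The right side is at most $1$ for a single point and at most $2$ for two points. The case $a\le 0$, or $B=0$, has to be excluded separately: if $B=0$ then $\tilde B=-aE$ with $a<0$, and we get a contradiction from $\tilde A\tilde B\le 0$ and bigness. Combining $DB-B^2\le 1$ or $2$, $0\le DB<D^2/2$, and $B^2\le (DB)^2/D^2$, where $D^2\ge 5$ or $9$, forces small values of $DB$. In case (1) we get $DB=0$, $B^2=-1$ or $DB=1$, $B^2=0$. In case (2) we get the listed possibilities; for instance $DB=2$ requires $B^2\ge 0$, and Hodge then forces $B^2=0$ unless $D^2$ is small.

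The main obstacle is this final enumeration, especially in case (2). The case $DB=3$, $B^2=1$ gives equality in the Hodge index theorem with $D^2=9$, so $D\equiv 3B$. This can only occur when $4C_S+9\le 9$, that is $C_S=0$. I also need to check that $B$ is nonzero in $S$ and that the infinitely near configurations produce no extra cases. To do this I will expand $\tilde A\tilde B\le 0$ carefully, using the integrality of the $a_i$ and the bound $(\tilde A-\tilde B)^2\ge\tilde D^2$.
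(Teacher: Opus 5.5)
Your proposal is correct and follows the paper's proof. You blow up to turn the failure into $H^1(\tilde S,\mathcal O_{\tilde S}(-\tilde D))\neq 0$ with $\tilde D=\pi^*D-2E$, apply Theorem \ref{thm:Effective_Sakai} on $\tilde S$, push $\tilde B$ down to get $(D-B)B\le \deg Z$ and $B^2<DB/2$, and close the case $DB=3$ with the Hodge index theorem. The differences are minor. You get $B^2<DB/2$ by intersecting $\tilde D-2\tilde B$ with $\pi^*D$ and applying Hodge index, where the paper cites Sakai's Lemma 3 and Ramanujam's lemma. You also check $C_{\tilde S}=C_S$, the bigness of $\tilde D$, and $B\neq 0$ explicitly, which the paper uses without comment.
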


 \begin{proof}

In this proof, for any subscheme $Z\subset S$, we denote its ideal sheaf by $\mathcal{I}_Z$.

To prove the theorem, we consider the following cases.

\begin{enumerate}
    \item 
Suppose that $K_S + D$ is not free at $Z =\{p\}$ where $p$ is a point. Then $H^1(S, \mathcal{O}_S(K_S+D)\otimes_{\mathcal{O}_S}\mathcal{I}_Z)\ne 0$. 
Let $\phi: \tilde{S}\to S$ be the blowup at $p$ and $E$ the exceptional divisor. It follows that $\phi_*(\mathcal{O}_{\tilde{S}}(-E)) = \mathcal{I}_Z$, $E^2 = -1$, and 
$K_{\tilde{S}} =\phi^*(K_S) + E$.

\item Suppose that $K_S + D$ does not separate two distinct points $p$ and $q$. Then $H^1(S, \mathcal{O}_S(K_S+D)\otimes_{\mathcal{O}_S}\mathcal{I}_Z)\ne 0$, where $Z$ is the reduced closed subscheme supported on $\{p, q\}$. 
Let $\phi: \tilde{S}\to S$ be the blowup of $S$ along $Z$, and $E$ be exceptional divisor over $Z$. It follows that
$\phi_*\mathcal{O}_{\tilde{S}}(-E) = \mathcal{I}_Z$, $E^2=-2$, and $K_{\tilde{S}} = \phi^*(K_S) + E$.

\item If $K_S + D$ does not separate tangents at a point $p$, then $H^0(S, \mathcal{O}_S(K_S + D))\to \mathcal{O}_S(K_S + D)\otimes_{\mathcal{O}_S}\mathcal{O}_S/\mathfrak{m}_p^2$ is not surjective. 
Let $(x, y)$ be local coordinates of $S$ at $p$, $Z$ be the 0-dimensional subscheme supported at $p$ and locally defined by $(x, y^2)$. 
Consequently, $H^0(S, \mathcal{O}_S(K_S + D))\to \mathcal{O}_S(K_S + D)\otimes_{\mathcal{O}_S}\mathcal{O}_S/\mathcal{I}_Z$ is not surjective. 
Then $H^1(S, \mathcal{O}_S(K_S+D)\otimes_{\mathcal{O}_S}\mathcal{I}_Z)\ne 0$. 
Let $\phi_1: S_1\to S$ be the blowup of $S$ at $p$ with the exceptional divisor $E_1$ over $p$, and $p'$ the intersection point of $E_1$ with the strict transform of the curve locally defined $y=0$. 
Let $\phi_2: \tilde{S}\to S_1$ be the blowup of $S_1$ at the 
point $p'$ and $E_2$ be the exceptional divisor over $p'$.
Let $\phi=\phi_2\circ\phi_1: \tilde{S}\to S$ be the composition and $E = \bar{E}_1 + 2E_2$ where $\bar{E}_1$ be the strict transform of $E_1$. Then $\phi_*\mathcal{O}_{\tilde{S}}(-E) = \mathcal{I}_Z$, $E^2 = -2$, $K_{\tilde{S}} =\phi^*K_S + E$.
\end{enumerate}

Let $\widetilde{D} = \phi^*D - 2E$. Then we have
$$
H^1(\tilde{S}, \mathcal{O}_{\tilde{S}}(-\widetilde{D})) = H^1(S, \phi_* \mathcal{O}_{\tilde{S}}(\phi^*K_S+\phi^*D - E)) = H^1(S, \mathcal{O}_S(K_S + D)\otimes_{\mathcal{O}_S}\mathcal{I}_Z)\ne 0.
$$

 From the definition of $E$, we see that $\widetilde{D}^2=(\phi^*D - 2E)^2 = D^2 - 4\deg Z> 4 C_S$, where $\deg Z = -E^2$.
 By Theorem \ref{thm:Effective_Sakai}, there exists a nonzero effective divisor $\widetilde{B}$ such that $(\widetilde{D}-\widetilde{B})\widetilde{B}<0$ and $(\widetilde{D}-2\widetilde{B})$ is big. Set $B=\phi_*\widetilde{B}$. Note that $B$ is effective because $\widetilde{B}$ is effective. Then there exists a divisor $G$ supported in the exceptional divisor $E$ such that $\phi^*B = \widetilde{B} + G$.
    It follows that
    $$\begin{aligned}
        (\widetilde{D} - \widetilde{B})\widetilde{B} = & (\phi^*D -\phi^*B - 2E + G )(\phi^*B - G)\\
        = & (D-B)B + (2E-G)G\\
        = & (D-B)B + E^2 - (E-G)^2\\
        \ge & (D-B)B + E^2.
    \end{aligned}$$
    Since $(\widetilde{D}-\widetilde{B})\widetilde{B}\le 0$, then $(D-B)B\le -E^2=\deg Z$, or equivalently, $B^2\ge DB-\deg Z$. 
    
    Since $(\widetilde{D}-2\widetilde{B})$ is big, by \cite[Lemma 3]{Sakai1990}, $D-2B$ is big. Since $D$ is nef and big, by Lemma \ref{lem:Nef=>NumCon}, we know that $(D-2B)2B>0$, which implies that $B^2<\frac{DB}{2}$. Therefore, $DB/2> DB-\deg Z$. Consequently, $$DB-\deg Z\leq B^2<\dfrac{DB}{2}<\deg Z.$$ 

From the definition of $Z$, we obtain the following conclusions.

If $K_S + D$ is not basepoint-free, then $\deg Z = -E^2 = 1$ which yields $0\le DB\le 1$ and $DB-1\le B^2< \frac{DB}{2}$.  If $DB=0$, then $B^2 = -1$. If $DB=1$, then $B^2=0$.

If $K_S+D$ is not very ample, then $\deg Z = - E^2 = 2$ which yields $0\le DB\le 3$ and $DB-2\le B^2 < \frac{DB}{2}$.
If $DB=0$, then $B^2=-1$ or $B^2=-2$.
If $DB = 1$, then $B^2 = 0$ or $B^2 = -1$.
If $DB=2$, then $B^2 = 0$.
If $DB = 3$, then $B^2=1$. In this last case, by the Hodge index theorem (see \cite[Corollary 2.4]{Badescu2001} and the assumption on $D$, we see that 
$$0\ge (D-3B)^2 = D^2 - 6DB + 9 B^2 =  D^2 - 9 \ge 4C_S \ge 0.$$
Consequently, $C_S=0$ and $(D-3B)^2=0$ which implies that $D$ is numerically equivalent to $3B$ by the Hodge index theorem.
\end{proof}

Because $C_S=0$ when $\kappa(S)\le 1$ and $S$ is not quasi-elliptic with $\kappa(S)=1$, Fujita's conjecture holds true on such a surface. When $S$ is quasi-elliptic with $\kappa(S)=1$, Fujita's conjecture has been confirmed in \cite{Chen2021}.
When $\kappa(S) =2$, in \cite{Gu2022}, Gu, Zhang, and Zhang show that for each nonnegative integer $m$, there is a surface $S$ and a divisor $A_R$ such that $\mathcal{O}_S(K_S + m A_R)$ is not globally generated. Consequently, Fujita's conjecture fails for $(S, A_R)$. By straightforward calculations, one can check that $(mA_R)^2 < 4C_S + 5$, that is, $m A_R$ does not satisfy the condition of Theorem \ref{thm:ReiderType}.

A generalization of Fujita's conjecture concerning the $k$-very ampleness of adjoint line bundle on quasi-elliptic surfaces with Kodaira dimension $1$ appears in \cite{Zhang2023}. A study of $k$-very ampleness in a general setting can be found in \cite{Terakawa1999}. It is worth noting that the argument employed in the proof of Theorem \ref{thm:ReiderType} can be generalized to establish similar results on $k$-spannedness. By applying Bridgeland stability conditions, Larsen and Tenie \cite{Larsen2025} have recently obtained Kodaira-type vanishing theorems and Reider-type results on normal projective surfaces.

\section{From Miyaoka-Sakai to Bogomolov\label{sec:MiyaokaVanishingToBogmolov}}

In characteristic zero, Theorem \ref{thm:Effective_Sakai} follows from Zariski decomposition and the Kawamata-Viehweg vanishing theorem (see \cite[Section 3]{Sakai1990}). Additionally, this approach also leads to the vanishing $H^1(S, \mathcal{O}_S(-D + B))=0$ (see \cite[Corollary 2.1]{DiCerbo2013}),
which plays a crucial role in deducing Bogomolov's instability theorem (see \cite{DiCerbo2013}) from the Miyaoka-Sakai theorem.

The situation differs in positive characteristic, where the Kawamata-Viehweg vanishing theorem is known to fail in general (see for example \cite[Theorem 3.1]{Cascini2018}). In particular, it is unclear that whether the vanishing $H^1(S, \mathcal{O}_S(-D + B)) = 0$ continue to hold. Nevertheless, given this vanishing, one can deduce Bogomolov's instability theorem (see Proposition \ref{prop:SakaiToBogomolov}). This observation motivates the following definition, which will simplify our subsequent discussion.

\begin{definition}
    Let $D$ be a big divisor $S$ such that $D^2>0$. We say that $D$ is a \textbf{Miyaoka-Sakai divisor} if there exists an effective divisor $B$ such that
    \begin{enumerate}
        \item if $B\ne 0$, then $(D - B)C\le 0$ for any irreducible components $C$ of $B$.
        \item $D-2B$ is big and $(D - 2B)^2 \ge D^2$.
        \item $H^1(S, \mathcal{O}_S(-D + B))= 0$.
    \end{enumerate}
\end{definition}

We say that the \textbf{Miyaoka-Sakai theorem holds on $S$} if every big divisor $D$ with $D^2>0$ on $S$ is a Miyaoka-Sakai divisor.

For any big divisor $D$ with $D^2>0$ on $S$, it is unclear whether it is a Miyaoka-Sakai divisor. 
One sufficient condition for $D$ being a Miyaoka-Sakai divisor is the existence of a divisor $Q$ such that $H^1(S, \mathcal{O}_S(-Q))=0$ and $D-Q$ is a subdivisor of the negative part of the Zariski decomposition of $D$.

\begin{lemma}\label{lem:Miyaoka}
    Let $D$ be a big divisor on $S$ such that $D^2>0$ and $H^1(S,\mathcal{O}_S(-D))\ne 0$. If there exists a divisor $Q \le D$ such that $H^1(S, \mathcal{O}_S(-Q))=0$ and $D - Q$ is a subdivisor of the negative part of the Zariski decomposition of $D$, then $D$ is a Miyaoka-Sakai divisor.
\end{lemma}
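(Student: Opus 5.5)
The plan is to choose, among all subdivisors of the negative part whose complement has vanishing $H^1$, one with \emph{minimal} $B$, and to show that this minimality forces condition (1) of the definition. Write the Zariski decomposition $D = P + N$, with $P$ a nef $\mathbb{Q}$-divisor, $N=\sum a_i N_i$ effective, the intersection matrix $(N_iN_j)$ negative definite, and $PN_i=0$. Consider the set of effective divisors $B$ with $0\le B\le N$ such that $B$ is integral and $H^1(S,\mathcal{O}_S(-D+B))=0$. By hypothesis $B_0 := D-Q$ lies in this set, and the set is finite. Pick $B$ in it with $B$ minimal with respect to $\le$ (for instance of minimal total coefficient). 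Note $B\neq 0$, since $H^1(S,\mathcal{O}_S(-D))\ne0$. Condition (3) of the definition holds by construction.

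For condition (2), use $0\le B\le N$. Then $D-2B = P + (N-2B)$, and
$$(D-2B)^2 = P^2 + (N-2B)^2 = D^2 - N^2 + (N-2B)^2 = D^2 + 4B^2 - 4NB\ \ge\ D^2 - 4(N-B)B.$$
It remains to check $(N-B)B\le 0$, or at least that $(D-2B)^2\ge D^2$ follows from condition (1). Indeed $(D-B)B = (N-B)B$ because $PB=0$, so (1) gives $(N-B)B\le 0$. Then
$$(D-2B)^2 = D^2 - 4(D-B)B\ge D^2>0.$$
Bigness follows because $(D-2B)P = P^2>0$, $P$ is nef and big, and Riemann–Roch together with the Hodge index argument (e.g. \cite[Proposition 2.2]{Badescu2001}) gives that a divisor with positive square and positive intersection with a nef big class is big. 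So everything reduces to (1).

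For (1), argue by contradiction. Suppose some irreducible component $C$ of $B$ has $(D-B)C>0$, so $(D-B)C\ge1$. Put $B' = B - C$, which is still effective and $\le N$. Twist the sequence $0\to\mathcal{O}_C(-C)\to\mathcal{O}_{2C}\to\dots$; more simply, use
$$0\to \mathcal{O}_S(-D+B')\to\mathcal{O}_S(-D+B)\to \mathcal{O}_C(-D+B)\to 0.$$
The long exact sequence gives
$$H^0(C,\mathcal{O}_C(-D+B))\to H^1(S,\mathcal{O}_S(-D+B'))\to H^1(S,\mathcal{O}_S(-D+B))=0.$$
Since $C$ is an integral curve and $\deg_C(-D+B) = -(D-B)C<0$, we get $H^0(C,\mathcal{O}_C(-D+B))=0$, hence $H^1(S,\mathcal{O}_S(-D+B'))=0$. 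Thus $B'$ lies in the set, contradicting minimality (the case $B'=0$ is impossible since $H^1(-D)\ne0$, which shows the contradiction even more directly).

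The main obstacle is bookkeeping. The negative part $N$ may be a $\mathbb{Q}$-divisor, so "subdivisor" must be read as an integral $B\le N$ coefficientwise. Also, $H^0$ of a negative-degree line bundle on an integral but possibly singular curve $C$ vanishes; this is standard.
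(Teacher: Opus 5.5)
Your proposal is correct and follows essentially the same route as the paper: both take a minimal integral $B$ with $H^1(S,\mathcal{O}_S(-D+B))=0$, use the restriction sequence to a component $C$ together with minimality to get $(D-B)C\le 0$, and then conclude via $(D-2B)^2=D^2-4(D-B)B\ge D^2$ and $(D-2B)P=P^2>0$. The only differences are immaterial: you minimize over all integral $0\le B\le N$ while the paper uses $0\le B\le D-Q$, and your chain $D^2+4B^2-4NB\ge D^2-4(N-B)B$ is actually an equality.
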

\begin{proof}
    Let $M = D - Q$
    which is a nonzero effective divisor by the definition of $Q$ and the assumption on $D$. 
    Write $M = \sum\limits_{i=1}^r k_i C_i$ where $k_i>0$. Consider the set of $r$-tuples
    $$
    T:=\{ (j_1, \cdots , j_r)\in \mathbb{Z}^r \mid 0\le j_i\le k_i, H^1(S, \mathcal{O}_S(-D +\sum_{i=1}^r j_i C_i)=0\}.
    $$
    Since $H^1(S, \mathcal{O}_S(-D + \sum\limits_{i=1}^r k_i C_i))=H^1(S, \mathcal{O}_S(-Q))=0$, the set $T$ is nonempty. Moreover, it is bounded because $0\le j_i\le k_i$. 
    Therefore, $T$ has a minimal element $(m_1, \cdots, m_r)$ with respect to the natural partial ordering: $(j_1, \cdots, j_r) < (j_1', \cdots, j_r')$ if $j_i\le j_i'$ 
    and $j_a < j_a'$ for all $1\le i\le r$ and some $1\le a\le r$. 
    Let $B = \sum\limits_{i=1}^r m_i C_i$. Then $H^1(S, \mathcal{O}_S(-D+B))=0$. 
    By the minimality of $B$, we know that $H^1(S, \mathcal{O}_S(- D + B - C_i))\ne 0$ for any $i$ such that $m_i>0$.
    From the short exact sequence
    $$
    0\to \mathcal{O}_S(- D + B - C_i)\to \mathcal{O}_S(- D + B)\to \mathcal{O}_S(- D + B)|_{C_i}\to 0,
    $$
    we see that 
    $$
    \dim H^0(C_i, \mathcal{O}_S(- D + B)|_{C_i})\ge \dim H^1(S, \mathcal{O}_S(- D + B - C_i)) \ge 1.
    $$
    If follows that $\mathcal{O}_S(- D + B)|_{C_i}$ is effective, which implies that $(D - B)C_i \le 0$.

    Since $B$ is effective, then $(D - B)B\le 0$. Therefore,
    $$
    (D - 2B)^2 = D^2 - 4DB + 4B^2 = D^2 - 4B(D-B)\ge D^2 > 0.
    $$
Let $P$ be the positive part of the Zariski decomposition of $D$. Then $PB = 0$ and $PD = P^2 >0$ which implies that $(D-2B)P > 0$. Therefore, $D - 2B$ is big.
\end{proof}

Clearly, such a divisor $Q$ exists if Kawamata-Viehweg vanishing theorem holds on $S$. 
In general, the existence of $Q\le D$ with $H^1(S, \mathcal{O}_S(-Q))=0$ usually requires stronger positivity of $D$.

\begin{corollary}
    Let $D$ be a big divisor on $S$ such that $D^2 > 0$. There exists an integer $m>0$ such that $kmD$ is a Miyaoka-Sakai divisor for all positive integer $k\ge 0$. 
\end{corollary}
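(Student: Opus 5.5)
We plan to apply Lemma \ref{lem:Miyaoka} to $kmD$ with $Q$ chosen from the positive part of a Zariski decomposition. Write the Zariski decomposition $D = P + N$, where $P$ is a nef $\mathbb{Q}$-divisor with $P^2 = D^2 > 0$, and $N$ is an effective $\mathbb{Q}$-divisor with negative definite intersection matrix and $PN=0$. Choose $m_0>0$ such that $m_0P$ and $m_0N$ are integral. Since $m_0P$ is nef and big, Serre-type vanishing for nef and big divisors is not available in positive characteristic. However, Corollary \ref{cor:Effective_Mumford} gives $H^1(S,\mathcal{O}_S(-L))=0$ for any nef integral divisor $L$ with $L^2>4C_S$. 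For all $k\ge 1$ we have $(km_0 P)^2 = k^2 m_0^2 D^2$, so after enlarging $m_0$ to some $m$ (a multiple of $m_0$) with $m^2D^2>4C_S$, every $kmP$ is integral, nef, and satisfies $(kmP)^2>4C_S$. Hence
$$H^1(S,\mathcal{O}_S(-kmP))=0 \quad \text{for all } k\ge 1.$$

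Next set $Q = kmP$. Then $kmD - Q = kmN$, which is the negative part of the Zariski decomposition of $kmD$, since Zariski decomposition is homogeneous. So $D-Q$ is trivially a subdivisor of that negative part, and $Q\le kmD$. Also $kmD$ is big with $(kmD)^2>0$. Two cases follow. If $H^1(S,\mathcal{O}_S(-kmD))\neq 0$, Lemma \ref{lem:Miyaoka} applies directly and $kmD$ is a Miyaoka-Sakai divisor. If $H^1(S,\mathcal{O}_S(-kmD))=0$, we take $B=0$: condition (1) is vacuous, $kmD-2B=kmD$ is big with $(kmD)^2\ge (kmD)^2$, and condition (3) is the assumed vanishing. (The lemma's proof itself would also handle this case, but it is cleaner to split it off.)

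The only point needing care is the vanishing for $kmP$. Here we rely on the effective Mumford-Ramanujam vanishing (Corollary \ref{cor:Effective_Mumford}), which requires only nefness and $L^2>4C_S$. The issue to check is integrality of $kmP$ when $N$ has non-integral coefficients; this is handled by taking $m$ divisible by the denominators of $P$. The positivity $(kmP)^2 = k^2m^2D^2 > 4C_S$ holds uniformly in $k\ge1$, which is why a single $m$ suffices for all $k$.
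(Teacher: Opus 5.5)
Your proposal is correct and follows the paper's proof. As in the paper, you take $Q=kmP$ with $m$ chosen so that $mP$ is integral and $(mP)^2>4C_S$, get $H^1(S,\mathcal{O}_S(-Q))=0$ from Corollary \ref{cor:Effective_Mumford}, and conclude with Lemma \ref{lem:Miyaoka}. Your separate treatment of the case $H^1(S,\mathcal{O}_S(-kmD))=0$ via $B=0$ is a useful addition, since the paper applies the lemma without checking its nonvanishing hypothesis. One small slip: in general $P^2=D^2-N^2\ge D^2$ rather than $P^2=D^2$, so you should write $(kmP)^2\ge k^2m^2D^2>4C_S$, which is all the argument needs.
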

\begin{proof}
    Let $D = P  +  N$ be the Zariski decomposition. Then $P^2\ge D^2 >0$. Let $m$ be the smallest integer such that $mP$ is integral and $(mP)^2 > 4 C_S$. The $kmP$ is integral and $(kmP)^2 > 4C_S$. Let $Q=kmP$. By Corollary \ref{cor:Effective_Mumford}, $H^1(S, \mathcal{O}_S(-Q))=0$. By Lemma \ref{lem:Miyaoka}, $kmD$ is a Miyaoka-Sakai divisor.
\end{proof}

It was proved by Di Cerbo \cite{DiCerbo2013} that Miyaoka-Sakai Theorem (\cite[Proposition 1]{Sakai1984}) is equivalent to Bogomolov instability theorem (\cite[10.12. Corollary 2]{Bogomolov1979}) in characteristic $0$. 
Moreover, by \cite{Miyaoka1980}, \cite{Sakai1984} and \cite{Fernandez1995}, these two theorems are also equivalent to Kawamata-Viehweg vanishing theorem.

In this section, we show that Bogomolov instability theorem holds on surfaces where Miyaoka-Sakai vanishing holds.

We first recall the following lemma due to Fern{\'a}denz del Busto which is still valid in positive characteristic.  
\begin{lemma}[{\cite[Uniform Multiplicity Property (UMP)]{Fernandez1995, Lazarsfeld1997}}]\label{lem:UMP}
Let $\mathcal{E}$ be a globally generated locally free sheaf of rank $2$ on $S$, 
$P=\{[s] \mid \dim Z(s) = 0\}\subset \mathbb{P}H^0(S, \mathcal{E})$, and $\mathfrak{Z}=\{(x, [s])\mid x\in Z(s)\}\subset S\times P$. Let $\mathfrak{D}\supset \mathfrak{Z}$ be an effective divisor on $S\times P$ flat over $P$. Suppose that for every general $[s]\in P$, there is a point $x\in Z(s)$ and a positive integer $m$ such that  $\mathrm{mult}_x D_s \ge m$ for $D_s =\mathfrak{D}|_s$.
Then $\mathrm{mult}_x D_s \ge m$ for a general section $[s]\in P$ and for all $x\in Z(s)$.
\end{lemma}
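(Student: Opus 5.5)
The approach is a deformation/monodromy argument over the parameter space $P$, using only that $P$ is irreducible and that the incidence variety $\mathfrak{Z}$ has an irreducible component dominating $P$ which is "transitive" on fibers. Since $P$ is an open subset of a projective space, it is irreducible. Consider the closed subset
$$\mathfrak{Z}_m=\{(x,[s])\in \mathfrak{Z} \mid \mathrm{mult}_x D_s\ge m\}\subset \mathfrak{Z}.$$
It is closed by upper semicontinuity of multiplicity along the flat family $\mathfrak{D}\to P$. This uses the fact that multiplicity of the fiber divisor at a point is upper semicontinuous on $S\times P$, which holds in any characteristic since it is defined via vanishing of the local equation modulo $\mathfrak{m}_x^m$. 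The hypothesis says that $\mathfrak{Z}_m$ surjects onto a dense subset of $P$, hence onto $P$ by properness of $\mathfrak{Z}\to P$ after restricting to an open set where this map is finite. The goal is to show $\mathfrak{Z}_m$ contains an open dense subset of $\mathfrak{Z}$ lying over general points, i.e.\ $\mathfrak{Z}_m\supseteq\mathfrak{Z}$ over a dense open subset of $P$.

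The key step is to show that $\mathfrak{Z}$ is irreducible over a dense open subset of $P$. Since $\mathcal{E}$ is globally generated, the evaluation map $H^0(S,\mathcal{E})\otimes\mathcal{O}_S\to\mathcal{E}$ is surjective. Hence for each $x\in S$, the sections vanishing at $x$ form a linear subspace of codimension $2$. So the first projection $\{(x,[s]) \mid s(x)=0\}\to S$ is a projective-space bundle of constant fiber dimension $h^0(\mathcal{E})-3$, and in particular irreducible. Its open subset $\mathfrak{Z}$ (the part over $P$) is therefore irreducible, provided it is nonempty; if $P$ is empty there is nothing to prove.

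With irreducibility in hand, $\mathfrak{Z}_m$ is a closed subset of the irreducible variety $\mathfrak{Z}$ whose image in $P$ is dense. By dimension count, $\mathfrak{Z}\to P$ is generically finite, since $\dim\mathfrak{Z}=2+h^0-3=\dim P$. Because $\mathfrak{Z}_m$ dominates $P$, we get $\dim\mathfrak{Z}_m\ge\dim P=\dim\mathfrak{Z}$, and irreducibility forces $\mathfrak{Z}_m=\mathfrak{Z}$. Thus for every $[s]\in P$ (in particular, for general $[s]$) and every $x\in Z(s)$ we have $\mathrm{mult}_xD_s\ge m$.

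The main obstacle I expect is the semicontinuity/closedness of $\mathfrak{Z}_m$ in positive characteristic. I would handle it by working locally: write $\mathfrak{D}$ by a local equation $F(x,t)$ on $S\times P$. Then $\mathrm{mult}_x D_t\ge m$ is equivalent to the vanishing of all Hasse derivatives of $F$ in the $S$-directions of order $<m$ at $(x,t)$, which is a closed condition. Hasse derivatives are used in place of ordinary derivatives to avoid characteristic issues. Flatness of $\mathfrak{D}$ over $P$ guarantees that $F$ restricts to a local equation of $D_t$ on each fiber.
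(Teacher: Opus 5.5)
Your argument is correct and is essentially the standard proof behind the references the paper cites; the paper itself gives no proof and only asserts that the lemma remains valid in positive characteristic. The argument runs as follows: $\mathfrak{Z}$ is irreducible, being an open subset of the projective bundle over $S$ whose fibre over $x$ is the projectivized codimension-two subspace of sections vanishing at $x$. It has the same dimension as $P$, so the closed locus $\mathfrak{Z}_m$, which dominates $P$, must be all of $\mathfrak{Z}$; this in fact gives the conclusion for every $[s]\in P$. Your jet/Hasse-derivative justification of the closedness of $\mathfrak{Z}_m$ is exactly what supports the paper's positive-characteristic claim. The only loose step is the surjectivity remark, which is unnecessary: dominance alone gives $\dim\mathfrak{Z}_m\ge\dim P$. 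In any case $\mathfrak{Z}\to P$ is finite over all of $P$, since it is proper and quasi-finite.
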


\begin{proposition}\label{prop:SakaiToBogomolov}
Assume that Miyaoka-Sakai vanishing holds on $S$. Then a locally free sheaf $\mathcal{E}$ of rank $2$ on $S$ with $\Delta = c_1(\mathcal{E})^2-4c_2(\mathcal{E})>0$ is slope $H$-unstable with respect to a certain ample divisor $H$.
\end{proposition}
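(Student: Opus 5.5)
We follow Di Cerbo's strategy for deducing Bogomolov's instability from the Miyaoka--Sakai theorem. The idea is to reduce to a globally generated bundle, produce a big divisor with nonvanishing $H^1(-D)$ from the zero loci of sections, and then read off a destabilizing line subbundle from the divisor $B$ given by the Miyaoka--Sakai property.

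\textbf{Reduction.} We first reduce to the case where $\mathcal{E}$ is globally generated and its general section has zero-dimensional zero locus. Both $\Delta$ and slope instability are invariant under twisting: $\Delta(\mathcal{E}(L))=\Delta(\mathcal{E})$, and $\mathcal{E}$ is $H$-unstable iff $\mathcal{E}(L)$ is. So we may replace $\mathcal{E}$ by $\mathcal{E}(mH_0)$ for a very ample $H_0$ and $m\gg0$. Then a general section $s$ vanishes on a reduced zero-dimensional scheme $Z=Z(s)$ of length $c_2(\mathcal{E})$, and we obtain an exact sequence
\[
0\to\mathcal{O}_S\to\mathcal{E}\to\mathcal{I}_Z\otimes\mathcal{O}_S(L)\to0,\qquad L=c_1(\mathcal{E}).
\]
Moreover $L^2-4c_2>0$, and $L$ is ample.

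\textbf{Constructing $D$.} The natural route is the Reider/Sakai construction used in the proof of Theorem~\ref{thm:ReiderType}. Let $\phi:\tilde S\to S$ be the blowup along $Z$, with exceptional divisor $E=\sum E_i$, and set $\tilde D=\phi^*L-2E$. Then
\[
\tilde D^2=L^2-4c_2(\mathcal{E})=\Delta>0.
\]
Since the extension corresponds to a nonzero class, we expect to get $H^1(\tilde S,\mathcal{O}_{\tilde S}(-\tilde D))\neq0$, or an analogous nonvanishing, by Serre duality and the Cayley--Bacharach property of $Z$ relative to $K_S+L$. Concretely, $\mathcal{E}$ is locally free, so $Z$ satisfies Cayley--Bacharach for $|K_S+L|$, and this gives $H^1(\mathcal{I}_Z(K_S+L))\neq0$. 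Via $K_{\tilde S}=\phi^*K_S+E$, this yields $H^1(-\tilde D)\neq0$. Bigness of $\tilde D$ follows from $\tilde D^2>0$ and $\tilde D\cdot\phi^*H_0>0$.

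We now apply the assumed Miyaoka--Sakai property to $\tilde D$ on $\tilde S$. This step needs the hypothesis on $\tilde S$ rather than $S$, which is fine if the hypothesis is birationally stable; otherwise we will phrase the argument on $S$ with $D=L$ and $Z$ as in Di Cerbo. The property gives an effective $\tilde B\neq0$ with $\tilde D-2\tilde B$ big, $(\tilde D-2\tilde B)^2\ge\tilde D^2$, and $H^1(-\tilde D+\tilde B)=0$. Note that $\tilde B\neq0$ is forced by $H^1(-\tilde D)\neq0$.

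\textbf{Producing the destabilizing subsheaf.} The vanishing $H^1(-\tilde D+\tilde B)=0$ says that the extension class dies after twisting by $\tilde B$. Pushing down, this should mean the section $s$ of $\mathcal{E}$ factors through a map $\mathcal{O}_S(B')\to\mathcal{E}$ for a suitable effective divisor $B'$ built from $\phi_*\tilde B$ and passing through points of $Z$. The map $\mathcal{O}_S(L-B)\hookrightarrow\mathcal{E}$ is obtained dually from $H^1(\mathcal{I}_Z(K_S+L-B))=0$ for $B=\phi_*\tilde B$. The multiplicities $\operatorname{mult}_x B$ at points $x\in Z$ vary with $s$, and this is exactly where Lemma~\ref{lem:UMP} enters. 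Applied to the family of divisors $B_s$ over general $[s]\in P$, it shows that $B$ has the same multiplicity $m$ at \emph{every} point of $Z(s)$. Then $\phi^*B-\tilde B=mE$ is uniform, and the computation in the proof of Theorem~\ref{thm:ReiderType} gives $(\tilde D-2\tilde B)=\phi^*(L-2B)+(2m-2)E$. Consequently $L-2B$ is big, by \cite[Lemma 3]{Sakai1990}.

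Finally, take $A=L-B$, so that $\mathcal{O}_S(A)\subset\mathcal{E}$. Then $c_1(A)-c_1(\mathcal{E}/A)=A-B=L-2B$ is big. Choosing an ample $H$ with $(L-2B)H>0$ gives $\mu_H(\mathcal{O}_S(A))>\mu_H(\mathcal{E})$, so $\mathcal{E}$ is $H$-unstable.

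\textbf{Main obstacle.} The hardest step is the passage from the vanishing $H^1(-D+B)=0$ to an actual saturated line subbundle $\mathcal{O}_S(L-B)\subset\mathcal{E}$, together with the uniformity in the points of $Z$. If the multiplicities of $B$ at points of $Z$ differed, the pullback computation would not yield bigness of $L-2B$. We will handle this by first proving the map exists for general $s$, then forming the flat family $\mathfrak{D}$ over $P$ and invoking Lemma~\ref{lem:UMP}.
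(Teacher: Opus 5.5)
\textbf{There is a genuine gap.} Your outline follows the paper's architecture: twist, blow up $Z$, apply the hypothesis to $\tilde D=\phi^*L-2E$ on $\tilde S$, then lift $\mathcal{O}_S(L-B)\to\mathcal{I}_Z(L)$ to $\mathcal{E}$. But the step that makes it work is missing. You never show that $B=\phi_*\tilde B$ contains even one point of $Z$. Without $\mathcal{O}_S(-B)\subset\mathcal{I}_Z$ there is no morphism $\mathcal{O}_S(L-B)\to\mathcal{I}_Z(L)$ to lift.

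Lemma~\ref{lem:UMP} cannot supply this. It only propagates a bound $\operatorname{mult}_x B_s\ge m$, with $m\ge1$, from \emph{one} point of $Z(s)$ to all of them, so your ``uniform multiplicity $m$'' may be $0$.

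The missing idea is an intersection computation using a condition you list but never use: $(\tilde D-2\tilde B)^2\ge\tilde D^2$, i.e.\ $(\tilde D-\tilde B)\tilde B\le0$. Writing $\tilde B=\phi^*B+\sum a_iE_i$ gives
\[
(L-B)B+\sum_i a_i(a_i+2)=(\tilde D-\tilde B)\tilde B\le 0 .
\]
Here $B\neq0$: otherwise $\tilde B=\sum a_iE_i$ with all $a_i\ge0$ and not all zero, and the left side is positive. Also $L$ is ample and $L-2B=\phi_*(\tilde D-2\tilde B)$ is big, so $(L-B)B>0$ by Lemma~\ref{lem:Nef=>NumCon}. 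Hence $a_i=-1$ for some $i$, and effectivity of $\tilde B$ gives $\operatorname{mult}_{x_i}B\ge1$. Only now does Lemma~\ref{lem:UMP} yield $Z(s)\subset B_s$ for general $s$, hence $\mathcal{O}_S(L-B_s)\hookrightarrow\mathcal{I}_{Z(s)}(L)$.

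\textbf{Secondary issues.} You misplace the role of uniformity. Bigness of $L-2B$ needs none: $L-2B=\phi_*(\tilde D-2\tilde B)$, and \cite[Lemma 3]{Sakai1990} applies directly. Moreover, $\phi^*B-\tilde B$ need not equal $(\operatorname{mult}_xB)E$, since $\tilde B$ is not necessarily a strict transform.

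For the lifting, push $H^1(\tilde S,\mathcal{O}_{\tilde S}(-\tilde D+\tilde B))=0$ down. By Leray, and since $\phi_*\mathcal{O}_{\tilde S}(-\tilde D+\tilde B)\subseteq\mathcal{O}_S(B-L)$ with zero-dimensional cokernel, you get $H^1(S,\mathcal{O}_S(B-L))=\operatorname{Ext}^1(\mathcal{O}_S(L-B),\mathcal{O}_S)=0$. This is exactly what splits the pulled-back extension.

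The vanishing $H^1(\mathcal{I}_Z(K_S+L-B))=0$ you invoke does not follow in general; when all $a_i=-1$ one only gets $H^1(\mathcal{O}_S(K_S+L-B))=0$. More importantly, no vanishing by itself produces a nonzero map $\mathcal{O}_S(L-B)\to\mathcal{E}$. Such a map induces a nonzero section of $\mathcal{I}_Z(B)$ (since $H^0(\mathcal{O}_S(B-L))=0$), which brings you back to $Z$ lying on a member of $|B|$.

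Your caveat about needing the hypothesis on $\tilde S$ is the same implicit step the paper takes.
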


\begin{proof}
    Let $\mathcal{E}$ be a locally free sheaf of rank $2$ on $S$ and $D$ be a divisor such that $\det(\mathcal{E})=\mathcal{O}_S(D)$. 
    For any divisor $G$ on $S$, we have
    $$c_1(\mathcal{E}\otimes\mathcal{O}_S(G)) = D + 2G\quad \text{and}\quad c_2(\mathcal{E}\otimes \mathcal{O}_S(G)) = DG + G^2 + c_2(\mathcal{E}).$$
    It follows that 
    $$c_1(\mathcal{E})^2 - 4 c_2(\mathcal{E}) =c_1(\mathcal{E}\otimes_{\mathcal{O}_S}\mathcal{O}_S(G))^2 - 4c_2(\mathcal{E}\otimes_{\mathcal{O}_S}\mathcal{O}_S(G))$$ for any divisor $G$.
    
    Since twisting a line bundle does not change the stability and Bogomolov's inequality, by twisting a sufficiently ample divisor, we may assume that $D$ is a very ample, $c_2(\mathcal{E})>0$, and $\mathcal{E}$ is globally generated, and the zero locus $Z_s$ of a general section $s$ of $\mathcal{E}$ consists of $r = c_2(\mathcal{E})$ distinct points by Bertini's theorem (see \cite[Proposition A]{Spreafico1996} or \cite[Proposition 11]{Kleiman1974}).
    Consider the Koszul complex determined by a general section $s$ which yields a short exact sequence
    $$0\to \mathcal{O}_S\to \mathcal{E} \to \mathcal{O}_S(D) \otimes\mathcal{I}_{Z_s}\to 0,$$
    where $\mathcal{I}_{Z_s}$ is the ideal sheaf of $Z_s$. 
    Because both $\mathcal{E}$ and $\mathcal{O}_S$ are locally free, the exact sequence is non-split. 
    It follows that $H^1(S, \mathcal{O}_S(K_S + D)\otimes\mathcal{I}_{Z_s})\ne 0$.
    Let $\pi: \widetilde{S}\to S$ be the blowup of $S$ along $Z_s$ and $E = E_1 +\cdots + E_r$ be the exceptional divisor. 
    Then
        $$H^1(\widetilde{S}, \mathcal{O}_{\widetilde{S}}(-\pi^*D + 2E)) = H^1(\widetilde{S}, \mathcal{O}_{\widetilde{S}}(K_{\widetilde{S}} + \pi^*D - 2E)) = H^1(S, \mathcal{O}_S(K_S + D)\otimes\mathcal{I}_{Z_s}) \ne 0.$$
     Let $\widetilde{D} = \pi^*D - 2E$. We see that $$\widetilde{D}^2 = D^2 - 4 r = c_1(\mathcal{E})- 4c_2(\mathcal{E}) > 0,$$ 
    and $\pi_*\widetilde{D} = D$ is big.
    By \cite[Lemma 3]{Sakai1990}, $\widetilde{D}$ is also big.
    Since Miyaoka-Sakai vanishing holds on $S$, there exists nonzero effective divisor $\widetilde{B}_s$ 
    such that $(\widetilde{D}_S - \widetilde{B}_s)\widetilde{B}_s\le 0$ and $H^1(\widetilde{S}, \mathcal{O}_{\widetilde{S}}(- \widetilde{D} + \widetilde{B}_s))=0$. 
    Note that $\widetilde{B}_s$ depends on $E$ hence the section $s$.
    Let $B_s = \pi_*\widetilde{B}_s$. Write $\widetilde{B}_s = \pi^*B_s + \sum\limits_{i=1}^r a_i E_i$, where $a_i$ are integers. We want to show that there exists $i$ such that $a_i < 0$, and hence $B_s$ passes through a point $x\in Z_s$.
    
    From the inequality $(\widetilde{D} - \widetilde{B})\widetilde{B} \le 0$, we obtain
    $$(D - B_s) B_s + \sum a_i(2+a_i) = \big((\pi^*D-2E) - (\pi^*B + \sum a_i E_i)\big)(\pi^*B + \sum a_i E_i) = (\widetilde{D} - \widetilde{B})\widetilde{B}\le 0.$$
    Because $D$ is ample, $B_s$ is effective, $D-2B_s$ is big, we see that $(D-B_s)B_s > 0$ by Lemma \ref{lem:Nef=>NumCon}.
    Consequently, $\sum\limits_{i=1}^r a_i(2+a_i) < 0$ which implies that there exists an index $i$ such that $a_i<0$.

    Therefore, $\mathrm{mult}_xB_s > 0$ for some $x\in Z_s$. By the uniform multiplicity property (Lemma \ref{lem:UMP}), by taking a general $s$, we may assume that $\mathrm{mult}_xB_s>0$ for all $x\in Z_s$. It follows that $\mathcal{I}_{Z_s}\supset \mathcal{O}_S(-B_s)$ and  $\beta: \mathcal{O}_S(D-B_s)\to \mathcal{O}_S(D)\otimes\mathcal{I}_{Z_s}$ is injective, which induces the following commutative diagram of extensions
        $$\begin{tikzcd}
        0\arrow[r] & \mathcal{O}_S\arrow[r]\arrow[d, equal] & \mathcal{F}\arrow[r]\arrow[d, hook]& \mathcal{O}_S(D-B_s)\arrow[r] \arrow[d, hook, "\beta"] & 0\\
        0\arrow[r] & \mathcal{O}_S\arrow[r] & \mathcal{E}\arrow[r] & \mathcal{I}_{Z_s}\otimes \mathcal{O}_S(D)\arrow[r] &0.
    \end{tikzcd}$$
        
    Since $H^1(\widetilde{S}, \mathcal{O}_{\widetilde{S}}(- \widetilde{D} + \widetilde{B}_s))=0$, by the projection formula and Leray spectral sequence, we know that $H^1(S, \mathcal{O}_S( - D + B_s)\otimes \mathcal{I}_{Z'})=0$, where $\mathcal{I}_{Z'} =\pi_*\mathcal{O}_S(-(2+a_i)E_i)$. Since $Z'$ has dimension $0$, from the short exact sequence
    $0\to \mathcal{O}_S(- D + B_s)\otimes \mathcal{I}_{Z'} \to \mathcal{O}_S(- D + B_s) \to \mathcal{O}_S(- D + B_s)\otimes \mathcal{O}_S/\mathcal{I}_{Z'}\to 0$,
    taking cohomology, we obtain
   $$0 = H^1(S, \mathcal{O}_S(- D + B_s))=\mathrm{Ext}^1(\mathcal{O}_S, \mathcal{O}_S(- D + B_s)) = \mathrm{Ext}^1(\mathcal{O}_S(D - B_s), \mathcal{O}_S).$$
    It follows that any extension of $\mathcal{O}_S(D - B_s)$ by $\mathcal{O}_S$ is splitting. Therefore, the injection $\beta$ lifts to an injection $\mathcal{O}_S(D-B_s)\to \mathcal{E}$.
 
    Since $D-2B_s$ is big, $(D-2B_s)^2 \ge D^2 >0$, and $D$ is ample, by Hodge index theorem, we know that $(D-2B_s)D>0$ which implies that 
    $$\mu_D(D - B_s) = (D - B_s) D > \frac{1}{2}(D^2 + (D - 2B_s)D) > \frac{D^2}{2} = \mu_D(\mathcal{E}).$$ Therefore, $\mathcal{E}$ is slope $D$-unstable. This completes the proof.
\end{proof}

\section{Vanishing Theorems on Surfaces of Kodaira Dimension at Most Zero\label{sec:vanishing_Kodaira<=0}}

In \cite{Enokizono2023}, Enokizono presents an effective version of the Kawamata-Viehweg vanishing theorem on normal surfaces in positive characteristic. Assume that $p>0$. Applying this result, we show that the Kawamata-Viehweg vanishing theorem holds on Hirzebruch and del Pezzo surfaces, while the Miyakao-Sakai vanishing theorem holds in addition on smooth Frobenius split ruled surfaces, weak del Pezzo surfaces, Abelian surfaces, and hyperelliptic surfaces. 

\begin{definition}[{\cite[Definition 3.10]{Enokizono2023}}]
A divisor $D$ on $S$ is called $\mathbb{Z}$-positive if $B - D$ is not nef over $B$ (i.e., $BC < DC$ holds for some irreducible component $C$ of $B$) for any effective negative definite divisor $B > 0$ on $S$.
\end{definition}

Enokizono (\cite[Theorem 3.5]{Enokizono2024}) shows that a pseudoeffective divisor has a unique integral Zariski decomposition whose positive part is a $\mathbb{Z}$-positive integral divisor.

\begin{theorem}[Integral Zariski Decomposition]
   Let $D$ be a pseudoeffective divisor on $S$. Then there exists a unique decomposition $D = P_{\mathbb{Z}} + N_{\mathbb{Z}}$ such that
   \begin{enumerate}
       \item $P_{\mathbb{Z}}$ is $\mathbb{Z}$-positive.
       \item $N_{\mathbb{Z}} = 0$ or $N_{\mathbb{Z}} > 0$ is a negative definite exceptional integral divisor.
       \item $P_{\mathbb{Z}} C \le 0$ for any irreducible component of $N_{\mathbb{Z}}$.
   \end{enumerate}
\end{theorem}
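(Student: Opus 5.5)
We construct $N_{\mathbb{Z}}$ as the minimal effective negative definite divisor whose removal makes the rest $\mathbb{Z}$-positive, working inside the support of the negative part $N$ of the ordinary (rational) Zariski decomposition $D=P+N$.

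\textbf{Existence.} Consider the set
$$\mathcal{N}=\{B\ge 0 \text{ integral, supported on } \operatorname{Supp} N,\ (D-B)C\le 0 \text{ for every component } C \text{ of } B\}.$$
It is nonempty, since $0\in\mathcal{N}$. We would first show that any $B\in\mathcal{N}$ satisfies $B\le \lceil N\rceil$, or at least that $\mathcal{N}$ is bounded. Write $B-N=X-Y$ with $X,Y\ge 0$ and no common components. For a component $C$ of $X$ we have $(P+N-B)C\le 0$ and $PC=0$, so $(X-Y)C\ge 0$; hence $X^2\ge XY\ge 0$. Since the intersection form on $\operatorname{Supp}N$ is negative definite, $X=0$, so $B\le N$.

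Next we show that $\mathcal{N}$ is closed under taking maxima. Take $B_1,B_2\in\mathcal{N}$ and let $C$ be a component of $\max(B_1,B_2)$ whose coefficient comes from $B_1$. Then $\max(B_1,B_2)-B_1\ge 0$ does not contain $C$, so
$$(D-\max(B_1,B_2))C\le (D-B_1)C\le 0.$$
Thus $\mathcal{N}$ has a unique maximal element, and we set $N_{\mathbb{Z}}:=\max\mathcal{N}$ and $P_{\mathbb{Z}}:=D-N_{\mathbb{Z}}$. Properties (2) and (3) hold by construction. For (1), suppose some effective negative definite $B'>0$ satisfies $(P_{\mathbb{Z}}-B')C\le 0$ for all components $C$ of $B'$. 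Then for every component $C$ of $N_{\mathbb{Z}}+B'$ we get $(D-N_{\mathbb{Z}}-B')C\le 0$: for $C$ in $B'$ this is the assumption, and for $C$ in $N_{\mathbb{Z}}$ but not in $B'$ it follows because $B'C\ge 0$. If $N_{\mathbb{Z}}+B'$ is supported in $\operatorname{Supp}N$, this contradicts maximality. So we must remove the support restriction. We would redo the boundedness argument for any effective divisor $B$ with $(D-B)C\le 0$ on its components, under the assumption that $\operatorname{Supp}B$ is negative definite; then the same computation against $P$ forces $\operatorname{Supp}B\subset \operatorname{Supp}N$. The argument needs $P\cdot C'>0$ for components $C'$ outside $\operatorname{Supp}N$, or else a Zariski-type argument. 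This is the step I expect to be the main obstacle.

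The remedy is to show directly that if $B\ge0$ satisfies $(D-B)C\le0$ on its components, then $B\le N$. Write $B=B'+B''$ with $B''$ the part outside $\operatorname{Supp}N$. Then
$$0\ge (D-B)B''=PB''+(N-B)B''\ge (N-B')B''-B''^2.$$
Since $PB''\ge 0$, $NB''\ge0$, and $B'B''\ge0$, we obtain $B''^2\ge (N-B')B''$. Combining this with the earlier $X$-argument applied to $\max(B,N)-N$ on the whole negative definite support of $B\cup N$ yields a contradiction unless $B''=0$. The key point is that we work on the support of $B+N$, which is negative definite whenever $B$ is: $N$ is orthogonal to $P$ and $B$ is negative definite, and Zariski's lemma shows that their union is still negative definite provided $P\cdot(\text{components of }B'')=0$. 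We would handle the case $PB''>0$ separately using the displayed inequality.

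\textbf{Uniqueness.} Suppose $D=P'+N'$ is another such decomposition. Property (3) for it gives $N'\in\mathcal{N}$ (after the support extension above), so $N'\le N_{\mathbb{Z}}$. Conversely, if $N_{\mathbb{Z}}-N'\ne 0$, then $B'=N_{\mathbb{Z}}-N'$ is effective and negative definite, and for each component $C$ of $B'$,
$$(P'-B')C=(D-N_{\mathbb{Z}})C\le0,$$
contradicting the $\mathbb{Z}$-positivity of $P'$. Hence $N'=N_{\mathbb{Z}}$.
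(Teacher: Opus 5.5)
\textbf{Gap: $\mathbb{Z}$-positivity when $B'$ leaves $\operatorname{Supp}N$.} The gap is exactly the step you flagged. Property (1), and the ``support extension'' you reuse for uniqueness, are never proved for a negative definite $B'>0$ with components outside $\operatorname{Supp}N$, and your remedy does not close it.

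Your claim that $\operatorname{Supp}B\cup\operatorname{Supp}N$ is negative definite once $P$ is orthogonal to the components of $B''$ is false when $P^2=0$. This case is allowed, since $D$ is only pseudoeffective. Take an elliptic surface with an $I_2$ fiber $F=C_1+C_2$, where $C_i^2=-2$ and $C_1C_2=2$. Then $D=2C_1+C_2$ has Zariski decomposition $P=F$, $N=C_1$. The divisor $B'=C_2$ is negative definite with $PC_2=0$, yet $C_1+C_2$ is a full fiber, which is not negative definite.

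The case $PB''>0$ is deferred to a ``separate'' argument that is never given. The inequality $B''^2\ge (N-B')B''$ yields nothing by itself, because its right-hand side has no fixed sign. Your diagnosis that you need $P\cdot C'>0$ off $\operatorname{Supp}N$ is also off: nefness gives $PC'\ge 0$, which is already the favourable direction.

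\textbf{Repair.} The fix uses only your own boundedness argument, arranged so that only $\operatorname{Supp}X$ needs to be negative definite.

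You correctly showed that $(D-N_{\mathbb{Z}}-B')C\le 0$ for every component $C$ of $N_{\mathbb{Z}}+B'$. Now write $N_{\mathbb{Z}}+B'-N=X-Y$ with $X,Y\ge 0$ having no common components. Since $N_{\mathbb{Z}}\le N$, every component of $X$ is a component of $B'$. For such $C$ we get $0\ge (D-N_{\mathbb{Z}}-B')C=PC-(X-Y)C$, that is, $(X-Y)C\ge PC\ge 0$. Hence $X^2\ge XY+PX\ge 0$. Since $\operatorname{Supp}X\subset\operatorname{Supp}B'$ is negative definite, $X=0$. So $N_{\mathbb{Z}}<N_{\mathbb{Z}}+B'\le N$ lies in $\mathcal{N}$, contradicting maximality.

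The same computation handles uniqueness. Take $X$ to be the positive part of $N'-N$, which is supported in the negative definite $\operatorname{Supp}N'$; this gives $N'\le N$, and the rest of your uniqueness argument then goes through.

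With this fix the proof is complete. The paper does not prove the statement itself but cites Enokizono. Your $\max\mathcal{N}$ is exactly the output of the iterative procedure the paper describes, which starts from $\lceil P\rceil=D-\lfloor N\rfloor$.
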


 According to \cite[Proposition 3.16 and Remark 3.17]{Enokizono2024}, the integral Zariski decomposition of a pseudo-effective $D$ can be obtained from the standard Zariski decomposition $D = P + N$ via the following iterative procedure. 
Set $P_0 = \lceil P \rceil$, the roundup of $P$. 
For $i \ge 1$, let $N_{i-1} = D - P_{i-1}$. If there exists an irreducible component $C_i$ of $N_{i-1}$ such that $P_{i-1} \cdot C_i > 0$, set $P_i = P_{i - 1} + C_i$. The process terminates when $P_i C\le 0$ for any irreducible component of $N_i = D - P_i$. 

Consequently, we have the following characterization of the $\mathbb{Z}$-positive part of the integral Zariski decomposition.

\begin{proposition}\label{prop:IntZD_from_ZD}
    Let $D$ be a pseudo-effective divisor on $S$ with the Zariski decomposition $D = P + N$, where $P$ is the positive part. Then the $\mathbb{Z}$-positive part of $P_{\mathbb{Z}}$ of the integral Zariski decomposition of $D$ can be written as $P_{\mathbb{Z}} =  P + M$, where $M$ is an effective $\mathbb{Q}$-divisor satisfying $M\le N$.
\end{proposition}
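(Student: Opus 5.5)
The plan is to follow the iterative construction of the integral Zariski decomposition recalled just before the statement, from \cite[Proposition 3.16 and Remark 3.17]{Enokizono2024}, and track what is added to $P$ at each stage. Set $P_0=\lceil P\rceil$ and write $P_0 = P + M_0$, where $M_0=\lceil P\rceil - P$ is effective with all coefficients in $[0,1)$. Since $D=P+N$ with $D$ integral, the fractional parts of $P$ and $N$ along each prime divisor add to an integer. Hence along any prime $C$ where $P$ has non-integral coefficient, $C$ lies in the support of $N$, and $\lceil P\rceil - P$ equals $\{N\}$ along $C$, the fractional part of the coefficient of $N$. In particular $M_0\le N$, with coefficients bounded by the coefficients of $N$ along $\operatorname{Supp}N$.

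Next I handle the iteration. At step $i$ we have $P_i = P + M_i$ with $N_i = D-P_i = N - M_i$. We pass to $P_{i+1}=P_i+C_{i+1}$, where $C_{i+1}$ is an irreducible component of $N_i$ with $P_iC_{i+1}>0$. I will prove by induction that $0\le M_i\le N$. The inductive step reduces to showing that $M_{i+1}=M_i+C_{i+1}\le N$, i.e.\ that the coefficient of $C_{i+1}$ in $N - M_i = N_i$ is at least $1$. This holds because $N_i=D-P_i$ is an integral divisor, and $C_{i+1}$ is a component of it with positive coefficient, so that coefficient is at least $1$.

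The remaining points to check are the following. First, $N_i$ is effective, so the components make sense. This follows from $N_i = N - M_i\ge 0$ by the inductive hypothesis. Second, $C_{i+1}$ lies in $\operatorname{Supp} N$. Third, the procedure terminates, which is automatic because the coefficients of $M_i$ are bounded by those of $N$ while the total coefficient increases by $1$ each step. The final $P_{\mathbb Z}=P_k=P+M_k$ with $M:=M_k$ effective, $M\le N$, and $M$ a $\mathbb{Q}$-divisor since $P$ is one.

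The only genuine subtlety, which I expect to be the main point needing care, is the first step. There I must verify that rounding up $P$ only adds coefficient along components of $N$, and does not exceed $N$ there. If $P$ has non-integral coefficient $a$ along $C$, then $N$ has coefficient $n\ge 0$ with $a+n\in\mathbb Z$, so $n>0$ and $n\ge \lceil a\rceil - a$. This is exactly the needed bound. I rely on the cited result of Enokizono that this procedure does produce the integral Zariski decomposition, together with its uniqueness, so the $P_{\mathbb Z}$ obtained is the one in the statement.
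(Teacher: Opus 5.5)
Your proposal is correct and is essentially the paper's argument. The paper obtains the proposition directly from the iterative procedure of \cite[Proposition 3.16 and Remark 3.17]{Enokizono2024} starting at $P_0=\lceil P\rceil$. You make explicit the bookkeeping the paper leaves implicit: namely $\lceil P\rceil - P = N-\lfloor N\rfloor$, and each added curve $C_{i+1}$ has coefficient at least $1$ in the integral effective divisor $N_i = N - M_i$, so $M_i\le N$ persists and the procedure terminates.
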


By \cite[Proposition 2.3.21]{Lazarsfeld2004}, we see that the following lemma follows immediately from Proposition \ref{prop:IntZD_from_ZD}.

\begin{lemma}\label{lem:h0D=h0P}
    Let $D = P_{\mathbb{Z}} + N_{\mathbb{Z}}$ be the integral Zariski decomposition. Then $H^0(S, \mathcal{O}_S(D))=H^0(S,\mathcal{O}_S(P_{\mathbb{Z}}))$.
\end{lemma}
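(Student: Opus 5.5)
The plan is to use Proposition~\ref{prop:IntZD_from_ZD} together with the standard fact \cite[Proposition 2.3.21]{Lazarsfeld2004}: if $D=P+N$ is the Zariski decomposition of a pseudo-effective divisor, then for every integer $m\ge 1$ the natural inclusion $H^0(S,\mathcal{O}_S(\lfloor mP\rfloor))\to H^0(S,\mathcal{O}_S(mD))$ is an isomorphism. For $m=1$ this says that every effective divisor $G\sim D$ satisfies $G\ge N$, i.e.\ $N$ is contained in the fixed part of $|D|$.

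First I note the trivial inclusion. Since $N_{\mathbb{Z}}=D-P_{\mathbb{Z}}$ is effective (it is either $0$ or an effective integral divisor by the integral Zariski decomposition theorem), multiplication by the canonical section of $\mathcal{O}_S(N_{\mathbb{Z}})$ gives an injection
$$H^0(S,\mathcal{O}_S(P_{\mathbb{Z}}))\hookrightarrow H^0(S,\mathcal{O}_S(D)).$$
It remains to prove surjectivity. Equivalently, every effective divisor $G\in|D|$ should satisfy $G\ge N_{\mathbb{Z}}$, so that $G-N_{\mathbb{Z}}\in|P_{\mathbb{Z}}|$. If $|D|=\emptyset$ there is nothing to prove, because then both spaces are zero.

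So fix $G\in|D|$. By the fact recalled above, $G\ge N$ as $\mathbb{Q}$-divisors. By Proposition~\ref{prop:IntZD_from_ZD}, $N_{\mathbb{Z}}=D-P_{\mathbb{Z}}=N-M$ with $0\le M\le N$. Hence $N_{\mathbb{Z}}\le N\le G$. This is the needed containment, and $G-N_{\mathbb{Z}}$ is an effective integral divisor linearly equivalent to $P_{\mathbb{Z}}$. The inclusion is therefore an isomorphism.

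The only real content is the fixed-part statement $G\ge N$, which is \cite[Proposition 2.3.21]{Lazarsfeld2004}. Its proof is characteristic-free: it uses only intersection theory and the negative definiteness of the components of $N$. I therefore expect no obstacle beyond checking that Proposition~\ref{prop:IntZD_from_ZD} really gives $N_{\mathbb{Z}}\le N$, which follows from $P_{\mathbb{Z}}=P+M$ with $M\ge 0$.
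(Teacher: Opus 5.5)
Your proposal is correct and is the paper's argument: the paper just cites \cite[Proposition 2.3.21]{Lazarsfeld2004} together with Proposition~\ref{prop:IntZD_from_ZD}. You have spelled out the containment $N_{\mathbb{Z}} = N - M \le N \le G$ for every $G \in |D|$ that makes this immediate; equivalently, $H^0(S,\mathcal{O}_S(\lfloor P\rfloor)) \subseteq H^0(S,\mathcal{O}_S(P_{\mathbb{Z}})) \subseteq H^0(S,\mathcal{O}_S(D))$, where the composite inclusion is bijective.
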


Let $F: S\to S$ be the Frobenius morphism. Denote by $H^1(S, \mathcal{O}_S)_n$ the \textbf{Frobenius-nilpotent} part of $H^1(S, \mathcal{O}_S)$, that is,
$$H^1(S, \mathcal{O}_S)_n = \ker((F^e)^*: H^1(S, \mathcal{O}_S) \to H^1(S, \mathcal{O}_S))$$
for a sufficiently large $e$, where $(F^e)^*=(F^*)^e$ is the $e$-iteration of the $p$-linear map $F^*(a v) = a^p F^*(v)$ for any $a\in \mathbf{k}$ and $v\in H^1(S, \mathcal{O}_S)$ induced by $F$.

 Another important observation of Enokizono is a vanishing theorem for the positive part of the integral Zariski decomposition of a big divisor.
 
\begin{theorem}[Enokizono Vanishing {\cite[Theorem 4.17]{Enokizono2023}}]\label{thm:Enokizono}
    Let $D$ be a big $\mathbb{Z}$-positive divisor on $S$. If $\dim H^0(S, \mathcal{O}_S(D)) > \dim H^1(S, \mathcal{O}_S)_n$, then
    $H^1(S, \mathcal{O}_S(-D))=0$.
\end{theorem}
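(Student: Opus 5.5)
This is Enokizono's vanishing theorem, so I would reproduce the standard Frobenius/Mumford-type argument for a big $\mathbb{Z}$-positive divisor $D$ with $h^0(D) > h^1(\mathcal{O}_S)_n$, adapted from Mukai's proof of Mumford-Ramanujam vanishing. The plan is to argue by contradiction. Assume $H^1(S,\mathcal{O}_S(-D))\ne 0$, so there is a nonzero class $\xi\in H^1(S,\mathcal{O}_S(-D))$.

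\textbf{Step 1 (reduce to the nilpotent subspace).} Pull back by Frobenius: $F^*\xi\in H^1(S,\mathcal{O}_S(-pD))$, and iterate. Multiplying by a section $s\in H^0(S,\mathcal{O}_S(D))$ gives a linear map
$$H^0(S,\mathcal{O}_S(D))\to H^1(S,\mathcal{O}_S),\qquad s\mapsto s\cdot\xi.$$
Since $h^0(D)>\dim H^1(S,\mathcal{O}_S)_n$, I will try to choose $s\ne0$ so that $s\xi$ lies in the nilpotent part, or better, is zero after sufficiently many Frobenius iterates. I expect to need semilinear-algebra bookkeeping here: combine the decomposition $H^1(\mathcal{O}_S)=H^1_{ss}\oplus H^1_n$ with a dimension count on the subspaces $\{s : F^{e*}(s\xi)=0\}$.

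\textbf{Step 2 (a curve with a nonsplit restriction).} Let $s$ vanish along an effective $G\in|D|$. The sequence
$$0\to\mathcal{O}_S(-D)\to\mathcal{O}_S\to\mathcal{O}_G\to0$$
shows that $s\cdot\xi=0$ in $H^1(\mathcal{O}_S)$ forces $\xi$ to come from $H^0(\mathcal{O}_G)/H^0(\mathcal{O}_S)$. Equivalently, the extension $\mathcal{E}$ defined by $\xi$ splits in a controlled way over $G$.

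\textbf{Step 3 (Ramanujam-type connectedness).} Replacing $\xi$ by a Frobenius iterate $F^{e*}\xi\ne0$, which is possible after choosing $e$ minimal, we get a nonconstant element of $H^0(\mathcal{O}_{G'})$ for $G'\in|p^eD|$. From this I want a decomposition $G'=G_1+G_2$ into nonzero effective parts with $\mathcal{O}_{G_1}(-G_2)$ having sections, which gives $G_1G_2\le0$ in the usual way. For $D$ nef and big this contradicts Lemma \ref{lem:Nef=>NumCon}. For $\mathbb{Z}$-positive $D$ that is not nef, I would write $p^eD$ through its Zariski decomposition. The parts of $G_1$ or $G_2$ that are numerically non-positive must then lie in a negative definite configuration $B$ with $(B-D)$ nef over $B$, which $\mathbb{Z}$-positivity forbids.

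The main obstacle is Step 1 together with the last move of Step 3. The first is making the dimension count against $H^1_n$ precise, since Frobenius is only $p$-linear. The second is extracting the forbidden negative definite $B$ from the connectedness failure when $D$ is not nef. This is where I expect to use that $\mathbb{Z}$-positivity is exactly the integral replacement for nefness.
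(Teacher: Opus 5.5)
The paper gives no proof of this statement; it is quoted from \cite[Theorem 4.17]{Enokizono2023}. Your sketch therefore has to stand on its own, and as written it has two genuine gaps.

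First, the dimension count in Step 1 runs in the wrong direction. The map $s\mapsto s\xi$ lands in all of $H^1(S,\mathcal{O}_S)$. A dimension count would only produce $s\neq 0$ with $s\xi\in H^1(S,\mathcal{O}_S)_n$ if $h^0(D)>\dim H^1(S,\mathcal{O}_S)-\dim H^1(S,\mathcal{O}_S)_n$, and that is not the hypothesis. The inequality $h^0(D)>\dim H^1(S,\mathcal{O}_S)_n$ is meant to contradict an \emph{injective} linear map $H^0(S,\mathcal{O}_S(D))\to H^1(S,\mathcal{O}_S)_n$, $s\mapsto s\xi$. So for every $s\neq 0$, with $G=\operatorname{div}(s)$, you must prove two things, and neither is in your proposal:
\begin{enumerate}
\item[(i)] $H^0(G,\mathcal{O}_G)=\mathbf{k}$, so that multiplication by $s$ is injective on $H^1(S,\mathcal{O}_S(-D))$ and $s\xi\neq 0$;
\item[(ii)] the Frobenius-stable subspace $\ker\bigl(H^1(S,\mathcal{O}_S)\to H^1(G,\mathcal{O}_G)\bigr)$, which contains $s\xi$, has no nonzero Frobenius-fixed vector, and hence lies in $H^1(S,\mathcal{O}_S)_n$.
\end{enumerate}
Statement (i) is a Ramanujam lemma for big $\mathbb{Z}$-positive divisors. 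A nonconstant function on $G$ gives $G=G_1+G_2$ with $G_i>0$ and $G_1C\le 0$ for all components $C$ of $G_2$. When $\operatorname{Supp}G_2$ is negative definite, $\mathbb{Z}$-positivity excludes this, as you say. Otherwise you still need a Hodge-index argument that uses bigness of $D$. Statement (ii) is the missing idea. One route: a fixed vector is an Artin--Schreier \'etale $\mathbb{Z}/p$-cover $\pi\colon T\to S$. If it dies on $G$, then by (i) the cover splits over $G$, so $\pi^*G$ is $p$ disjoint copies of $G$. This contradicts (i) for $\pi^*D$, which is again big and $\mathbb{Z}$-positive by uniqueness and Galois invariance of the integral Zariski decomposition.

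Second, passing to $p^eD$ in Step 3 destroys the hypothesis you need, because $\mathbb{Z}$-positivity is not stable under multiplication. Take $\mathbb{F}_2$ with $C_0^2=-2$ and fibre $F$, and let $D=C_0+F=\lceil F+\tfrac12 C_0\rceil$. This $D$ is big and $\mathbb{Z}$-positive with $h^0(D)=2>0=\dim H^1(S,\mathcal{O}_S)_n$. However:
\begin{itemize}
\item $p^eD\cdot C_0=-p^e\le C_0^2$, so $B=C_0$ makes $B-p^eD$ nef over $B$;
\item $\chi(\mathcal{O}_S(-p^eD))=1-p^e<0$, so $H^1(S,\mathcal{O}_S(-p^eD))\neq 0$;
\item every member of $|p^eD|$ contains $C_0$ and carries nonconstant functions.
\end{itemize}
So a nonconstant element of $H^0(\mathcal{O}_{G'})$ with $G'\in|p^eD|$ yields no contradiction. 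The configuration you would extract satisfies $(B-p^eD)C\ge 0$, not $(B-D)C\ge 0$, and $\mathbb{Z}$-positivity of $D$ does not forbid that.

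Even for nef and big $D$, Step 3 never fires. There $H^1(S,\mathcal{O}_S(-p^eD))=0$ for $e\gg 0$, which is exactly what makes every $s\xi$ Frobenius-nilpotent, while multiplication by $s^{p^e}$ stays injective. The contradiction then comes from the count in (i)--(ii), not from connectedness of a member of $|p^eD|$. For non-nef $\mathbb{Z}$-positive $D$ this asymptotic vanishing fails, as the example shows. That is why (ii) has to be argued at the level of $D$ itself.
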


\begin{corollary}\label{cor:EnokizonoToMSv}
    Let $D$ be a divisor on $S$ such that $D^2>0$ and $\dim H^0(S, \mathcal{O}_S(D)) > \dim H^1(S, \mathcal{O}_S)_n$. Then $D$ is a Miyaoka-Sakai divisor.
\end{corollary}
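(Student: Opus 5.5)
We will apply Lemma \ref{lem:Miyaoka} with $Q = P_{\mathbb{Z}}$, the $\mathbb{Z}$-positive part of the integral Zariski decomposition $D = P_{\mathbb{Z}} + N_{\mathbb{Z}}$. First, $D$ is big: $D^2>0$ and $h^0(D)>\dim H^1(S,\mathcal{O}_S)_n\ge 0$, so $D$ is effective, hence pseudoeffective. A pseudoeffective divisor with positive self-intersection is big, since its Zariski positive part $P$ has $P^2\ge D^2>0$ and is nef. So the integral Zariski decomposition exists.

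Next we check that $P_{\mathbb{Z}}$ satisfies the hypotheses of Theorem \ref{thm:Enokizono}. By Proposition \ref{prop:IntZD_from_ZD}, $P_{\mathbb{Z}} = P + M$ with $0\le M\le N$. Since $P_{\mathbb{Z}}\ge P$ and $P$ is nef with $P^2>0$, $P_{\mathbb{Z}}$ is big. By Lemma \ref{lem:h0D=h0P}, $\dim H^0(S,\mathcal{O}_S(P_{\mathbb{Z}})) = \dim H^0(S,\mathcal{O}_S(D)) > \dim H^1(S,\mathcal{O}_S)_n$. Since $P_{\mathbb{Z}}$ is also $\mathbb{Z}$-positive, Theorem \ref{thm:Enokizono} gives $H^1(S,\mathcal{O}_S(-P_{\mathbb{Z}}))=0$.

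We now split into two cases.

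\emph{Case $H^1(S,\mathcal{O}_S(-D))=0$.} Take $B=0$. Condition (1) of the definition of a Miyaoka-Sakai divisor is then vacuous, (2) holds because $D$ is big, and (3) holds by assumption.

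\emph{Case $H^1(S,\mathcal{O}_S(-D))\neq 0$.} Set $Q=P_{\mathbb{Z}}\le D$. Then $D-Q=N_{\mathbb{Z}}=N-M$, and because $M\ge 0$ we get $D-Q\le N$. Thus $D-Q$ is a subdivisor of the negative part of the Zariski decomposition of $D$. It is an integral divisor, being the difference of integral divisors, and it is nonzero because $H^1(S,\mathcal{O}_S(-Q))=0\neq H^1(S,\mathcal{O}_S(-D))$. Lemma \ref{lem:Miyaoka} then shows that $D$ is a Miyaoka-Sakai divisor.

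The main point to verify carefully is the containment $N_{\mathbb{Z}}\le N$ as divisors, which is what Lemma \ref{lem:Miyaoka} needs. This follows from Proposition \ref{prop:IntZD_from_ZD}, since $N_{\mathbb{Z}} = D-P-M = N-M$ with $M\ge 0$. The other step that needs care is confirming that $P_{\mathbb{Z}}$ is big, so that Enokizono's theorem applies; this follows from $P_{\mathbb{Z}}\ge P$ with $P$ nef and big.
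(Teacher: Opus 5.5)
Your proof is correct and follows the paper's argument. You establish bigness, write $P_{\mathbb{Z}}=P+M$ via Proposition \ref{prop:IntZD_from_ZD}, get $H^1(S,\mathcal{O}_S(-P_{\mathbb{Z}}))=0$ from Lemma \ref{lem:h0D=h0P} and Theorem \ref{thm:Enokizono}, and conclude with Lemma \ref{lem:Miyaoka}; your separate handling of the case $H^1(S,\mathcal{O}_S(-D))=0$ with $B=0$ is a useful addition, since Lemma \ref{lem:Miyaoka} assumes $H^1(S,\mathcal{O}_S(-D))\neq 0$ and the paper applies it without checking this.
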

\begin{proof}
Because $\dim H^0(S, \mathcal{O}_S(D)) > \dim H^1(S, \mathcal{O}_S)_n \ge 0$ and $D^2 > 0$, we know that $D$ is big. 
Let $D = P_{\mathbb{Z}} + N_{\mathbb{Z}}$ be the integral Zariski decomposition and $D = P + N$ the Zariski decomposition of $D$.

By Proposition \ref{prop:IntZD_from_ZD}, $P_{\mathbb{Z}} = P + M$, where $0\le M\le N$. Since $D$ is big, so is $P$ and hence $P_\mathbb{Z}$ is big.

By Lemma \ref{lem:h0D=h0P}, 
$$\dim H^0(S, \mathcal{O}_S(P_{\mathbb{Z}})) = \dim H^0(S, \mathcal{O}_S(D)) > \dim H^1(S, \mathcal{O}_S)_n.$$

By Theorem \ref{thm:Enokizono}, $H^1(S, \mathcal{O}_S(-P_{\mathbb{Z}})) = 0$. Therefore, by Lemma \ref{lem:Miyaoka}, we see that $D$ is a Miyaoka-Sakai divisor.
\end{proof}

In the case that $H^1(S, \mathcal{O}_S)_n =0$, the condition $H^0(S, \mathcal{O}_S(D))>H^1(S, \mathcal{O}_S)_n =0$ is equivalent to that $D$ is effective. With this observation, we will show that Miyaoka-Sakai vanishing holds on smooth Frobenius split surfaces given that $D$ is effective.

\begin{definition}[{\cite[Definition 2]{Mehta1985}, \cite[Definition 3.1]{Schwede2010}}] 
The surface $S$ is \textbf{Frobenius split} ($F$-split) if the Frobenius morphism $\mathcal{O}_S\to F_*\mathcal{O}_S$ splits.
It is \textbf{globally $F$-regular} if there exists a positive integer $e$ such that the natural morphism $\mathcal{O}_S\to F^e_*(\mathcal{O}_S(D))$ splits for every effective divisor $D$.
\end{definition}

It is easy to checked that globally $F$-regular implies $F$-split. However, the converse is not true. For example, ordinary Abelian surfaces are $F$-split but not globally $F$-regular.

\begin{lemma}\label{lem:F-Split=>NoNilpotent}
    If $S$ is Frobenius split, then $H^1(S,\mathcal{O}_S)_n = 0$.
\end{lemma}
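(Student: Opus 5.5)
The plan is to show that when $S$ is Frobenius split, the map $F^*: H^1(S,\mathcal{O}_S)\to H^1(S,\mathcal{O}_S)$ is injective. Then every iterate $(F^e)^* = (F^*)^e$ is injective, so its kernel, which is $H^1(S,\mathcal{O}_S)_n$ by definition, is zero.

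To see the injectivity, I will identify the map induced by $F$ on cohomology. The Frobenius morphism $F: S\to S$ is affine, hence $H^1(S, F_*\mathcal{O}_S) = H^1(S,\mathcal{O}_S)$ as abelian groups. Under this identification, the map
$$H^1(S,\mathcal{O}_S)\to H^1(S,F_*\mathcal{O}_S),$$
coming from the sheaf map $F^\#:\mathcal{O}_S\to F_*\mathcal{O}_S$, $f\mapsto f^p$, is exactly the $p$-linear map $F^*$. One can check this on Čech cocycles with respect to an affine cover: a cocycle $(g_{ij})$ is sent to $(g_{ij}^p)$.

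By Frobenius splitting there is an $\mathcal{O}_S$-linear map $\varphi: F_*\mathcal{O}_S\to\mathcal{O}_S$ with $\varphi\circ F^\# = \mathrm{id}_{\mathcal{O}_S}$. Applying the functor $H^1(S,-)$ gives
$$H^1(\varphi)\circ H^1(F^\#) = \mathrm{id},$$
so $H^1(F^\#)$ is injective. Hence $F^*$ is injective on $H^1(S,\mathcal{O}_S)$.

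There is one point to watch: $H^1(F^\#)$ is only additive and $p$-linear, not $\mathbf{k}$-linear. This is harmless, because injectivity of a group homomorphism is all that the argument needs. Composing injective maps keeps them injective, so the kernel of $(F^e)^*$ is trivial for every $e$, and therefore $H^1(S,\mathcal{O}_S)_n = 0$.

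The main step is the identification of $H^1(F^\#)$ with $F^*$. It is routine, so I expect no real obstacle.
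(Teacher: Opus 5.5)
Your proposal is correct and follows essentially the same route as the paper: the splitting makes the map $H^1(S,\mathcal{O}_S)\to H^1(S,F_*\mathcal{O}_S)=H^1(S,\mathcal{O}_S)$ injective, and iterating shows that every $(F^e)^*$ has trivial kernel. You also spell out the left inverse $H^1(\varphi)$ and the Čech identification of this map with the $p$-linear $F^*$, both of which the paper leaves implicit.
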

\begin{proof}
    Since $S$ is Frobenius split, the Frobenius morphism induces an injection on cohomology: 
$$F^*: H^1(S, \mathcal{O}_S)\to H^1(S, \mathcal{O}_S) = H^1(S, F_*\mathcal{O}_S).$$ 
 By induction, if $(F^*)^e(v)=0$, then $v=0$. Therefore, $H^1(S, \mathcal{O}_S)_n = 0$.
\end{proof}

By \cite[Lemma 1]{Mehta1991}, If $S$ is Frobenius split, then $H^0(S,\mathcal{O}_S((1-p)K_S)) \ne 0$. It follows that a smooth surface $S$ with Kodaira dimension $\kappa(S)\ge 1$ cannot be Frobenius split. Indeed, by \cite[Theorem 4.3]{Schwede2010}, if $S$ is $F$-split, then there exist an effective $\mathbb{Q}$-divisor $\Delta$ such that $(S,\Delta)$ is log Calabi-Yau, that is, $(S,\Delta)$ is log canonical and $K_S+\Delta$ is $\mathbb{Q}$-trivial.

\begin{example}[Frobenius Split Surfaces] Here is an incomplete list of surfaces that are Frobenius split.

\begin{enumerate}
    \item Toric Surfaces. 
    
By \cite[Proposition 6.4]{Smith2000}, a smooth toric surface $S$ is globally $F$-regular and hence Frobenius split. On toric surfaces, the Kawamata-Viehweg vanishing theorem holds under extra conditions (see \cite[Theorem A]{Wang2017} and \cite[Theorem 1.1]{Tanaka2024} and references therein).

\item Weak del Pezzo Surfaces in characteristic $p>5$.

    By \cite[Theorem 1.1]{Kawakami2025}, a smooth weak del Pezzo surface over an algebraically closed field of characteristic $p>5$ is globally $F$-regular, in particular, Frobenius split. 
    
    For log del Pezzo surfaces of characteristic $p>5$, the Kawamata-Viehweg vanishing theorem has been proved (see \cite[Theorem 1.1]{Arvidsson2022}). the Kawamata-viehweg vanishing theorem also holds on del Pezzo surfaces over imperfect ground fields of characteristic $p > 3$ (see \cite[Theorem 1.1]{Das2021}).

\item Ruled surfaces $\mathbb{P}(\mathcal{O}_C\oplus \mathcal{L})$ over a Frobenius split curve $C$.
By \cite[Proposition 3.1]{Gongyo2016}, if $C$ is Frobenius split, that is, $C$ is $\mathbb{P}^1$ or an ordinary elliptic curve, then $S=\mathbb{P}(\mathcal{O}_C\oplus \mathcal{L})$ is Frobenius split.

\item Ordinary abelian surfaces.

    By \cite[Proposition 3.1]{Joshi2003}, an ordinary Abelian surface $S$ is Frobenius split. For the definition of smooth ordinary variety, see \cite[Section 3]{Joshi2003} or \cite[Definition 7.2]{Bloch1986}. We will show in next section that the Kawamata-Viehweg vanishing theorem still holds on Abelian surfaces and hyperelliptic surfaces (see Proposition \ref{prop:AbelianHyperelliptic}.)
    
\item Elliptic surfaces with ordinary base curves and fibers.

    Suppose $S$ admits be an elliptic fibration  $\pi: S\to C$ over an elliptic curve $C$. If the curve $C$ and general fibers are ordinary and $\mathcal{O}_S((1-p)K_S)=\mathcal{O}_S$, then by \cite[Theorem 5.7]{Shirato2017}, $S$ is Frobenius split.

\item Weakly ordinary $K3$ surfaces.

    By \cite[Proposition 2.6]{Patakfalvi2017} (see also  \cite[Proposition 9]{Mehta1985}), a weakly ordinary smooth $K3$ surface, that is, the action of the Frobenius morphism on $H^2(S,\mathcal{O})$ is bijective, is Frobenius split.
\end{enumerate}
\end{example}

We want to remark that \cite[Proposition 3.2]{Kawakami2025} provides sufficient conditions for a $F$-pure klt (including smooth) surfaces to be Frobenius split.

On smooth Frobenius split surfaces, Enokizono's vanishing leads to the following variant of the Kawamata-Viehweg vanishing theorem.

\begin{proposition}[Kawamata-Viehweg Vanishing on Frobenius Split Surfaces]\label{prop:D>=0_KV_FSplit}
   Assume that $S$ is Frobenius split and $D$ is a nef and big $\mathbb{Q}$-divisor on $S$. If $\lceil D \rceil$ is linearly effective, then $H^1(S, \mathcal{O}_S(-\lceil D\rceil)) = 0$.
    In particular, the conclusion holds if $D$ is effective.
\end{proposition}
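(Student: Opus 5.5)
Set $L=\lceil D\rceil$. The plan is to apply Enokizono Vanishing (Theorem \ref{thm:Enokizono}) to the $\mathbb{Z}$-positive part of $L$, and then to show that this $\mathbb{Z}$-positive part is $L$ itself.

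Since $S$ is Frobenius split, Lemma \ref{lem:F-Split=>NoNilpotent} gives $H^1(S,\mathcal{O}_S)_n=0$. Because $L$ is linearly effective, $\dim H^0(S,\mathcal{O}_S(L))\ge 1>0=\dim H^1(S,\mathcal{O}_S)_n$. Since $L-D$ is effective and $D$ is big, $L$ is big.

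Let $L=P_{\mathbb{Z}}+N_{\mathbb{Z}}$ be the integral Zariski decomposition. Lemma \ref{lem:h0D=h0P} gives $\dim H^0(S,\mathcal{O}_S(P_{\mathbb{Z}}))=\dim H^0(S,\mathcal{O}_S(L))>0$. By Proposition \ref{prop:IntZD_from_ZD}, $P_{\mathbb{Z}}$ is big. Theorem \ref{thm:Enokizono} then yields $H^1(S,\mathcal{O}_S(-P_{\mathbb{Z}}))=0$.

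The main step is to show $N_{\mathbb{Z}}=0$, i.e.\ that $L$ is already $\mathbb{Z}$-positive. Write $L=D+F$ with $F=\lceil D\rceil-D$, an effective $\mathbb{Q}$-divisor with coefficients in $[0,1)$. Suppose $B>0$ is effective, negative definite, and $B-L$ is nef over $B$, i.e.\ $(B-L)C\ge 0$ for every component $C$ of $B$. I will derive a contradiction.

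Write $B=\sum b_iC_i$ with each $b_i\ge 1$. Since $D$ is nef, $DC_i\ge 0$, so $(B-F)C_i\ge (B-L)C_i+DC_i\ge 0$.

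I would first try a direct intersection argument with $G=B-F$. The coefficients of $G$ along the $C_i$ are $b_i-f_i>0$. The components of $F$ outside $\operatorname{Supp}B$ meet each $C_i$ nonnegatively, so subtracting them cannot raise $GC_i$. Let $G'$ be the restriction of $B-F$ to $\operatorname{Supp}B$. Then $G'C_i\ge (B-F)C_i\ge 0$ for all $i$.

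So $G'$ is a nonzero effective $\mathbb{Q}$-divisor supported on the negative definite configuration with $G'\cdot C_i\ge 0$ for all $i$. This gives $G'^2=\sum_i(b_i-f_i)\,G'C_i\ge 0$. Negative definiteness forces $G'^2<0$ since $G'\ne 0$, which is the desired contradiction.

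Hence no such $B$ exists, so $L$ is $\mathbb{Z}$-positive and $P_{\mathbb{Z}}=L$, $N_{\mathbb{Z}}=0$. Therefore $H^1(S,\mathcal{O}_S(-\lceil D\rceil))=0$.

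For the final clause, if $D$ is effective then $\lceil D\rceil$ is effective, hence linearly effective.

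The delicate point to check is the sign bookkeeping for the components of $F$ lying outside $\operatorname{Supp}B$ when passing from $B-F$ to $G'$.
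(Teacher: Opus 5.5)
Your proof is correct and follows the paper's route: $H^1(S,\mathcal{O}_S)_n=0$ from the Frobenius splitting, then bigness and $\mathbb{Z}$-positivity of $\lceil D\rceil$, then Enokizono vanishing (Theorem \ref{thm:Enokizono}). The one difference is that you prove the $\mathbb{Z}$-positivity of $\lceil D\rceil$ by hand, where the paper cites \cite[Corollary 3.18]{Enokizono2024}; your negative-definiteness argument with $G'=\sum_i(b_i-f_i)C_i$ is sound, including the sign bookkeeping, since components of $F$ off $\operatorname{Supp}B$ meet each $C_i$ nonnegatively, and once it is in place the detour through $P_{\mathbb{Z}}$ is unnecessary because Theorem \ref{thm:Enokizono} applies to $\lceil D\rceil$ directly.
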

\begin{proof}
    Since $D$ is nef and big, the divisor $\lceil D\rceil$ is effective, big and $\mathbb{Z}$-positive by \cite[Corollary 3.18]{Enokizono2024}. By Lemma \ref{lem:F-Split=>NoNilpotent}, $\dim H^1(S, \mathcal{O}_S)_n = 0$. By Theorem \ref{thm:Enokizono}, $H^1(S, \mathcal{O}_S(-\lceil D\rceil)) = 0$.
\end{proof}

If $S$ is a smooth toric variety, Proposition \ref{prop:D>=0_KV_FSplit} partially recovers \cite[Theorem B]{Wang2017}.

\begin{proposition}
    Assume that $S=\mathbb{F}_n$ is a Hirzebruch surface and $D$ is a nef and big $\mathbb{Q}$-divisor on $S$. Then 
    $$H^1(S, \mathcal{O}_S(-\lceil D\rceil))=0.$$
\end{proposition}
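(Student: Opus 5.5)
Since $S=\mathbb{F}_n$ is a smooth toric surface, it is Frobenius split, and Lemma \ref{lem:F-Split=>NoNilpotent} gives $H^1(S,\mathcal{O}_S)_n=0$; in fact $H^1(S,\mathcal{O}_S)=0$ because $S$ is rational. By Proposition \ref{prop:D>=0_KV_FSplit}, it therefore suffices to show that $\lceil D\rceil$ is linearly equivalent to an effective divisor. This is the only new input beyond the Frobenius split case: we have to drop the hypothesis that $\lceil D\rceil$ is linearly effective by using the explicit geometry of $\mathbb{F}_n$.

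Let $\sigma$ denote the negative section, with $\sigma^2=-n$, and let $f$ be a fiber. Then $\operatorname{Pic}(S)=\mathbb{Z}\sigma\oplus\mathbb{Z}f$, the effective cone is spanned by $\sigma$ and $f$, and the nef cone is spanned by $f$ and $\sigma+nf$. The plan is:
\begin{enumerate}
\item Since every $\mathbb{Q}$-divisor is $\mathbb{Q}$-linearly equivalent to a combination of torus-invariant prime divisors, we would like to move $D$ to such a representative without changing $\lceil D\rceil$. In general this is impossible, because rounding depends on the chosen representative and not only on the class.
\item So we instead show directly that every integral divisor $L$ with $L-D$ having fractional-part structure, namely $L=\lceil D\rceil=D+\Delta$ with $0\le\Delta<1$ effective, is linearly equivalent to an effective divisor. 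Write $\lceil D\rceil\equiv a\sigma+bf$ with $a,b\in\mathbb{Z}$. An integral class $a\sigma+bf$ is effective on $\mathbb{F}_n$ if and only if $a\ge0$ and $b\ge0$. Since the Picard group is discrete, numerical and linear equivalence agree here, and $h^0(a\sigma+bf)>0$ exactly when $a,b\ge0$.
\item Since $D$ is nef, we have $D\equiv\alpha\sigma+\beta f$ with $\alpha\ge0$ and $\beta\ge n\alpha\ge0$. Since $\Delta$ is effective, its class is $\gamma\sigma+\delta f$ with $\gamma,\delta\ge0$. Hence $a=\alpha+\gamma\ge0$ and $b=\beta+\delta\ge0$, so $\lceil D\rceil$ is effective.
\end{enumerate}

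To make step 3 rigorous, one checks that an effective $\mathbb{Q}$-divisor has class in the cone generated by $\sigma$ and $f$. This holds because every irreducible curve other than $\sigma$ has class $c\sigma+df$ with $c\ge0$ and $d\ge nc\ge0$, and $\sigma$ itself has class $\sigma$. Then $\lceil D\rceil=D+\Delta$ is an integral class in the effective cone, and is therefore linearly equivalent to an effective divisor by step 2.

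Applying Proposition \ref{prop:D>=0_KV_FSplit} then gives $H^1(S,\mathcal{O}_S(-\lceil D\rceil))=0$.

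The step I expect to need the most care is verifying that smooth toric surfaces, in particular $\mathbb{F}_n$, are Frobenius split in every characteristic $p$, via \cite[Proposition 6.4]{Smith2000} as in the example list. Alternatively, one can bypass Frobenius splitting altogether, since $H^1(S,\mathcal{O}_S)=0$ already forces $H^1(S,\mathcal{O}_S)_n=0$. In that case we would apply Theorem \ref{thm:Enokizono} directly, using \cite[Corollary 3.18]{Enokizono2024} for the $\mathbb{Z}$-positivity of $\lceil D\rceil$, and the condition $h^0(\lceil D\rceil)>0$ follows from the effectivity established above.
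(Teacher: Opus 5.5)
Your proposal is correct and follows essentially the paper's route: it uses Enokizono's vanishing (Theorem \ref{thm:Enokizono}) with $H^1(S,\mathcal{O}_S)_n=0$ to reduce to showing that $\lceil D\rceil$ is linearly effective, and then uses that $\operatorname{Pic}(\mathbb{F}_n)=\mathbb{Z}\sigma\oplus\mathbb{Z}f$ with effective cone spanned by $\sigma$ and $f$ (the paper gets $H^1(S,\mathcal{O}_S)=0$ from rationality and applies the theorem directly, rather than going through Proposition \ref{prop:D>=0_KV_FSplit}). The differences are cosmetic: the paper places the class of $\lceil D\rceil$ in the effective cone because some multiple $k\lceil D\rceil$ is effective (bigness), whereas you add the nef class of $D$ to the effective class of $\Delta=\lceil D\rceil-D$, and both arguments are valid.
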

\begin{proof}
    Since $S$ is rational, $H^1(S, \mathcal{O}_S)=0$. By Theorem \ref{thm:Enokizono}, it suffices to show that $\lceil D\rceil$ is effective. 
    Let $C_0$ be the unique zero section such that $C_0^2=-n$ and $F$ be a fiber. Note that the effective cone is $\mathrm{Eff}(S)=\mathbb{R}_{\ge 0} C_0 + \mathbb{R}_{\ge 0} F$ (see \cite[Theorem 1]{Rosoff2002}) and the Picard group is $\mathrm{Pic}(S) = \mathbb{Z}C_0 + \mathbb{Z}F$. Write $\lceil D\rceil \sim a C_0 + b F$. Since $D$ is nef and big, $k\lceil D\rceil$ is linearly equivalent to an effective $m C_0 + nF$ for some sufficiently larger number $k$. Since $(ka - m)C_0 + (kb -n) F \sim 0$. Intersecting with $F$ and $C_0$ shows that $ka-m = kb-n = 0$ which implies that $a\ge 0$ and $b\ge 0$ and $\lceil D \rceil$ is effective.
\end{proof}

A similar argument also yields a new proof of the Kawamata-Viehweg vanishing theorem on smooth del Pezzo surfaces.

\begin{proposition}[Kawamata-Viehweg Vanishing on Smooth del Pezzo Surfaces {\cite[Theorem 3.1]{Cascini2018}}]
    Assume that $S$ is a del Pezzo surface and $D$ is a nef and big $\mathbb{Q}$-divisor on $S$. Then $H^1(S, \mathcal{O}_S(-\lceil D\rceil))=0$.
\end{proposition}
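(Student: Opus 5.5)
Following the Hirzebruch case, we reduce to showing that $\lceil D\rceil$ is effective. A smooth del Pezzo surface $S$ is rational, so $H^1(S,\mathcal{O}_S)=0$ and in particular $H^1(S,\mathcal{O}_S)_n=0$. Since $D$ is nef and big, $\lceil D\rceil$ is big and $\mathbb{Z}$-positive by \cite[Corollary 3.18]{Enokizono2024}, as in the proof of Proposition \ref{prop:D>=0_KV_FSplit}. Theorem \ref{thm:Enokizono} then gives $H^1(S,\mathcal{O}_S(-\lceil D\rceil))=0$ as soon as $\dim H^0(S,\mathcal{O}_S(\lceil D\rceil))\ge 1$. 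So the whole proof comes down to producing one nonzero section of $\mathcal{O}_S(\lceil D\rceil)$.

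To get effectivity I will use Riemann--Roch rather than the explicit cone description used for $\mathbb{F}_n$. Put $L=\lceil D\rceil$. On a rational surface $\chi(\mathcal{O}_S)=1$, so
$$h^0(L)+h^2(L)\ge \chi(L)=1+\tfrac12 L(L-K_S).$$
For the $h^2$ term, Serre duality gives $h^2(L)=h^0(K_S-L)$. This vanishes because $-K_S$ is ample and $L$ is pseudoeffective (even big), so $(K_S-L)\cdot(-K_S)<0$ and $K_S-L$ cannot be effective.

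It remains to show $L(L-K_S)\ge 0$, and this is the main obstacle. Write $L=D+\Delta$ with $\Delta$ an effective $\mathbb{Q}$-divisor whose coefficients lie in $[0,1)$. The easy terms are $-K_S\cdot L>0$, by ampleness of $-K_S$ and bigness of $L$, and $D\cdot L\ge 0$, by nefness of $D$. The difficulty is that $L^2=D\cdot L+\Delta\cdot D+\Delta^2$ may have $\Delta^2<0$. My first attempt is to bound the term $\Delta(L-K_S)=\Delta\cdot D+\Delta^2-K_S\Delta$ componentwise. For each irreducible curve $C$ in the support of $\Delta$, adjunction on a del Pezzo surface gives $C^2-K_SC=2p_a(C)-2+2\cdot(-K_SC)\ge -2+2=0$, since $-K_SC\ge1$, and in fact $C^2\ge -1$. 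The cross terms $C_iC_j$ with $i\neq j$ are nonnegative. Hence
$$\Delta^2-K_S\Delta\ \ge\ \sum_i \delta_i^2C_i^2+\sum_i\delta_i(-K_SC_i)\ \ge\ \sum_i(\delta_i-\delta_i^2)\ \ge\ 0,$$
using $C_i^2\ge -1$, $-K_SC_i\ge 1$ and $0\le\delta_i<1$. Together with $\Delta\cdot D\ge 0$ and $D(L-K_S)=D\cdot D+D\cdot\Delta-K_S D>0$, this gives $L(L-K_S)=D(L-K_S)+\Delta(L-K_S)>0$.

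Therefore $\chi(L)\ge 1$ and $h^0(L)\ge 1$, so $\lceil D\rceil$ is effective. Applying Theorem \ref{thm:Enokizono} finishes the proof. If the inequality $C^2\ge -1$ for curves on smooth del Pezzo surfaces needs justification, it follows from adjunction: $C^2=2p_a-2+K_S\cdot(-C)\cdot(-1)$ gives $C^2\ge -2+1=-1$.
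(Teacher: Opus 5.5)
Your argument is correct. It follows the paper's outline: $\lceil D\rceil$ is big and $\mathbb{Z}$-positive by \cite[Corollary 3.18]{Enokizono2024}, and $H^1(S,\mathcal{O}_S)=0$. So Theorem~\ref{thm:Enokizono} reduces everything to $h^0(\lceil D\rceil)\ge 1$, which comes from Riemann--Roch with $\chi(\mathcal{O}_S)=1$. Where you differ is the effectivity step.

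The paper first shows that $L=\lceil D\rceil$ is nef. Applying $\mathbb{Z}$-positivity to $B=C$ gives $L\cdot C>C^2=-1$ for every negative curve $C$, since all negative curves on a del Pezzo surface are $(-1)$-curves. Curves with $C^2\ge 0$ pair nonnegatively with the pseudoeffective $L$. Then $L^2\ge 0$ and $-K_S\cdot L\ge 0$ make $\chi(L)\ge 1$ immediate.

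You never establish nefness. Instead you estimate $L(L-K_S)$ directly through the fractional-part decomposition $L=D+\Delta$, using only $C^2\ge -1$ and $-K_S\cdot C\ge 1$ for irreducible curves. So in your proof $\mathbb{Z}$-positivity enters only to invoke Enokizono's vanishing.

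The paper's route yields the stronger intermediate fact that $\lceil D\rceil$ is nef and big. Since $C_S=0$ for a del Pezzo surface, Corollary~\ref{cor:Effective_Mumford} would then already give the vanishing without Enokizono's theorem. Your route is a self-contained numerical computation. It also makes explicit the vanishing $h^2(L)=h^0(K_S-L)=0$, which the paper's appeal to Riemann--Roch leaves implicit.

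One cosmetic point: the adjunction formula in your last sentence is garbled. It should read $C^2=2p_a(C)-2+(-K_S\cdot C)\ge -2+1=-1$, which your earlier, correct identity $C^2-K_S\cdot C=2p_a(C)-2+2(-K_S\cdot C)$ already gives.
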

\begin{proof}
Since $D$ is nef and big, by \cite[Corollary 3.18]{Enokizono2024}, $\lceil D\rceil$ is $\mathbb{Z}$-positive. Therefore, $\lceil D\rceil C > C^2$ for any irreducible curve with $C^2<0$. Since $-K_S$ is ample, by adjunction formula, all irreducible negative curves are $(-1)$-curves. Therefore, $\lceil D\rceil C \ge 0$ for any negative curves. Because $m\lceil D\rceil$ is effective for a sufficiently large $m$, we see that $m\lceil D\rceil C \ge 0$  for any irreducible curve $C$. Therefore, $\lceil D\rceil$ is nef. By Riemann-Roch theorem, and the fact that $\chi(\mathcal{O}_S)=1$, we can conclude that $H^0(S,\mathcal{O}_S(\lceil D\rceil)) \ne 0$.

Since $\lceil D\rceil$ is an effective big $\mathbb{Z}$-positive divisor and $H^1(S, \mathcal{O}_S)=0$, by Theorem \ref{thm:Enokizono}, we obtain $$H^1(S, \mathcal{O}_S(-\lceil D\rceil)) = 0.$$
\end{proof}

\begin{remark}
    The Kawamata-Viehweg vanishing theorem holds for globally $F$-regular pairs (see \cite[Theorem 6.8]{Schwede2010}).
\end{remark}

\begin{proposition}[Miyaoka-Sakai Vanishing on Ruled Surfaces over Frobenius Split Curves]\label{prop:MS_FSplit_Ruled}
 Assume that $S=\mathbb{P}(\mathcal{O}_C\oplus \mathcal{L})\to C$ is a geometrically ruled surface, $D$ is a big divisor on $S$ with $D^2>0$, and $C$ is Frobenius split. If $H^1(S, \mathcal{O}_S(-D))\ne 0$, then $D$ is a Miyaoka-Sakai divisor.
\end{proposition}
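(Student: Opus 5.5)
The plan is to reduce to Lemma \ref{lem:Miyaoka} via Corollary \ref{cor:EnokizonoToMSv}. By Example (3), $S=\mathbb{P}(\mathcal{O}_C\oplus\mathcal{L})$ is Frobenius split whenever $C$ is, so Lemma \ref{lem:F-Split=>NoNilpotent} gives $H^1(S,\mathcal{O}_S)_n=0$. Enokizono's condition in Corollary \ref{cor:EnokizonoToMSv} then reduces to $H^0(S,\mathcal{O}_S(D))\neq 0$. By Lemma \ref{lem:h0D=h0P}, this is the same as $P_{\mathbb{Z}}$ being linearly equivalent to an effective divisor, where $D=P_{\mathbb{Z}}+N_{\mathbb{Z}}$ is the integral Zariski decomposition. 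So the whole proof comes down to showing that the $\mathbb{Z}$-positive part of a big divisor on such a ruled surface is linearly effective. Once this holds, Theorem \ref{thm:Enokizono} gives $H^1(S,\mathcal{O}_S(-P_{\mathbb{Z}}))=0$. Since $H^1(S,\mathcal{O}_S(-D))\ne 0$, the divisor $Q=P_{\mathbb{Z}}$ satisfies $Q\le D$, $D-Q=N_{\mathbb{Z}}\le N$ by Proposition \ref{prop:IntZD_from_ZD}, and $H^1(S,\mathcal{O}_S(-Q))=0$. Lemma \ref{lem:Miyaoka} then says $D$ is a Miyaoka-Sakai divisor.

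To show that $P_{\mathbb{Z}}$ is effective, I would copy the Hirzebruch-surface argument. Since $C$ is Frobenius split, $C\cong\mathbb{P}^1$ or $C$ is an ordinary elliptic curve. The case $C\cong\mathbb{P}^1$ is the Hirzebruch case already treated. For $C$ elliptic, let $C_0$ be the section with $C_0^2=-e$, where $e=\deg$ of the appropriate twist of $\mathcal{L}$ (normalize so that $e\ge 0$), and let $f$ be a fiber. Then $\operatorname{Pic}(S)=\pi^*\operatorname{Pic}(C)\oplus\mathbb{Z}C_0$, and the pseudoeffective cone is spanned by $C_0$ and $f$ when $e\ge0$. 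The decomposable case $\mathcal{O}_C\oplus\mathcal{L}$ with $e\ge 0$ is the only one arising. Write $P_{\mathbb{Z}}\sim aC_0+\pi^*\mathfrak{b}$ with $\deg\mathfrak{b}=b$. Bigness of $P_{\mathbb{Z}}$, or just pseudoeffectivity of its numerical class, gives $a\ge 0$ and $b\ge 0$. If $e>0$, the only negative curve is $C_0$, and $\mathbb{Z}$-positivity gives $P_{\mathbb{Z}}C_0>C_0^2=-e$, that is, $b-ae\ge 1-e$. If $e=0$, the divisor $P_{\mathbb{Z}}$ is nef.

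The main obstacle is that on an elliptic base, numerical effectivity does not give linear effectivity: a degree-$0$ nontrivial $\mathfrak{b}$ with $a=0$ has no sections. I would handle this by a direct computation. Since
\[
\pi_*\mathcal{O}_S(aC_0+\pi^*\mathfrak{b})=\operatorname{Sym}^a(\mathcal{O}_C\oplus\mathcal{L}^{-1})\otimes\mathfrak{b},
\]
this is a direct sum of line bundles $\mathcal{L}^{-i}\otimes\mathfrak{b}$ of degrees $b-ie$, for $0\le i\le a$. So $h^0\ne 0$ as soon as some summand has positive degree, or has degree $0$ and is trivial. The inequality $b\ge ae+1-e$, together with bigness ($P^2>0$ forces $a\ge1$ and $2b-ae>0$), should give $b-ie\ge 1$ for a suitable $i$ when $e>0$. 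I would check whether $i=a$ or $i=a-1$ works using these inequalities; I expect this needs a short case analysis. When $e=0$, bigness gives $a\ge 1$ and $b>0$, so the $i=0$ summand already has positive degree. If the boundary case $b-ie=0$ for all relevant $i$ occurs, I would instead fall back on $D$ itself: replace $Q$ by $P_{\mathbb{Z}}$ plus a suitable component of $N_{\mathbb{Z}}$ (which is supported on $C_0$) and reapply the same degree count.
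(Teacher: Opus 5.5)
Your proposal is correct. The overall reduction is exactly the paper's: $S$ is Frobenius split, so $H^1(S,\mathcal{O}_S)_n=0$; then you show $P_{\mathbb{Z}}$ is effective and apply Theorem \ref{thm:Enokizono} and Lemma \ref{lem:Miyaoka}. Where you differ is in how you prove that $P_{\mathbb{Z}}$ is effective.

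\textbf{The paper's route.} It uses Riemann--Roch. Write $P_{\mathbb{Z}}\equiv aC_0+bF$ with $C_0^2=-n$. Bigness gives $a>0$ and $2b-an>0$. Since $g\le 1$, this yields $-K_SP_{\mathbb{Z}}=2b-an+a(2-2g)>0$, and also $\chi(\mathcal{O}_S)=1-g\ge 0$. Hence $h^0(P_{\mathbb{Z}})\ge\frac12(P_{\mathbb{Z}}^2-K_SP_{\mathbb{Z}})+\chi(\mathcal{O}_S)>0$. This implicitly uses $h^2(P_{\mathbb{Z}})=h^0(K_S-P_{\mathbb{Z}})=0$, which holds because $(K_S-P_{\mathbb{Z}})F<0$. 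The lower bound depends only on the numerical class. So it treats the $\mathbb{P}^1$ and elliptic bases uniformly, and it removes your worry about numerical versus linear effectivity with no computation at all.

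\textbf{Your route.} Pushing forward to $C$, you get $\pi_*\mathcal{O}_S(P_{\mathbb{Z}})=\bigoplus_{i=0}^{a}\mathcal{L}^{-i}\otimes\mathfrak{b}$. This is more explicit (it even gives $h^0(P_{\mathbb{Z}})\ge b$), but it relies on the splitting $\mathcal{O}_C\oplus\mathcal{L}$. You also made it harder than needed. The summand $i=0$ already has the largest degree, namely $b$. Bigness alone gives $a\ge 1$ and $2b>ae\ge 0$, so $b\ge 1$ and $h^0(C,\mathfrak{b})\ge 1$. No case analysis and no $\mathbb{Z}$-positivity inequality are needed. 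Your candidate $i=a-1$ also works, since $b-(a-1)e=(b-ae)+e\ge 1$. By contrast, $i=a$ can fail: take $e=1$, $a=b=2$.

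\textbf{The fallback.} No boundary case ever arises, which is fortunate, because your fallback would not be justified. If $C_0\le N_{\mathbb{Z}}$, then $P_{\mathbb{Z}}C_0\le 0$, so $(P_{\mathbb{Z}}+C_0)C_0\le C_0^2$. Thus $P_{\mathbb{Z}}+C_0$ is not $\mathbb{Z}$-positive, and Theorem \ref{thm:Enokizono} cannot be applied to it.
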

\begin{proof}
Since $C$ is Frobenius split, $C$ is $\mathbb{P}^1$ or an ordinary elliptic curve. By \cite[Proposition 3.1]{Gongyo2016}, $S$ is Frobenius split.
Let $D = \mathbb{P}_{\mathbb{Z}} + N_{\mathbb{Z}}$ be the integral Zariski decomposition of $D$, where $P_{\mathbb{Z}}$ is the $\mathbb{Z}$-positive part. Then $P_\mathbb{Z}^2\ge D^2 >0$. Let $a$ and $b$ be integers such that $P_{\mathbb{Z}}$ is numerically equivalent to $aC_0 + b F$, where $C_0$ is the unique curve with $C_0^2 = - n$, $F$ is a fiber, and $n = \deg\mathcal{L}$. Because $P_{\mathbb{Z}}$ is big and $F^2 = 0$, $P_{\mathbb{Z}} F>0$ which implies $a>0$. Because $P_{\mathbb{Z}}^2=(a C_0 + b F)^2 = -n a^2 + 2 a b >0$. Hence $-a n + 2b >0$. Note that $-K_X = 2C_0 +  (n + 2 - 2g) F$. Then 
$$-P_\mathbb{Z} K_S = -a n + 2b + a(2 - 2g) > 0.$$
Note that $\chi(\mathcal{O}_S)\ge 1 - q = 1 - g(c) \ge 0$. By Riemann-Roch theorem, we see that
$$\dim H^0(X, \mathcal{O}_X(P_\mathbb{Z})) \ge \frac{1}{2}(P_\mathbb{Z}^2 - K_S P_\mathbb{Z}) + \chi(\mathcal{O}_S) > 0.$$
We conclude that $P_{\mathbb{Z}}$ is effective. 

By Lemma \ref{lem:F-Split=>NoNilpotent}, $\dim H^0(S, \mathcal{O}_S)_n=0$. Since $P_{\mathbb{Z}}$ is effective, by Theorem \ref{thm:Enokizono}, we see that $H^1(S, \mathcal{O}_S(-P_{\mathbb{Z}})) = 0$. Then $D$ is a Miyaoka-Sakai divisor by Lemma \ref{lem:Miyaoka}. 
\end{proof}

Similar to the proof the Proposition \ref{prop:MS_FSplit_Ruled}, we also have the following Miyaoka-Sakai Vanishing on weak del Pezzo surfaces.
\begin{proposition}[Miyaoka-Sakai Vanishing on Smooth Weak del Pezzo Surfaces]
Assume that $S$ is a weak del Pezzo surface and $D$ is a big divisor on $S$ with $D^2>0$. Then $D$ is a Miyaoka-Sakai divisor.
\end{proposition}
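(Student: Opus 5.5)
Following the proof of Proposition \ref{prop:MS_FSplit_Ruled}, the plan is to find a subdivisor $Q\le D$ with $H^1(S,\mathcal{O}_S(-Q))=0$ and $D-Q$ supported in the negative part, and then apply Lemma \ref{lem:Miyaoka}. The natural candidate is the $\mathbb{Z}$-positive part $P_{\mathbb{Z}}$ of the integral Zariski decomposition $D=P_{\mathbb{Z}}+N_{\mathbb{Z}}$. By Proposition \ref{prop:IntZD_from_ZD}, $P_{\mathbb{Z}}=P+M$ with $0\le M\le N$, so $D-P_{\mathbb{Z}}\le N$ is a subdivisor of the negative part. Moreover $P_{\mathbb{Z}}$ is big and $P_{\mathbb{Z}}^2\ge P^2\ge D^2>0$. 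If $H^1(S,\mathcal{O}_S(-D))=0$, we take $B=0$ and the definition is satisfied trivially. So we may assume $H^1(S,\mathcal{O}_S(-D))\neq 0$, which is exactly the hypothesis of Lemma \ref{lem:Miyaoka}.

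Since $S$ is rational, $H^1(S,\mathcal{O}_S)=0$, so $H^1(S,\mathcal{O}_S)_n=0$. By Theorem \ref{thm:Enokizono} it then suffices to show that $P_{\mathbb{Z}}$ is effective, i.e.\ $H^0(S,\mathcal{O}_S(P_{\mathbb{Z}}))\neq 0$. We use Riemann--Roch together with $\chi(\mathcal{O}_S)=1$:
$$h^0(P_{\mathbb{Z}})\ge \tfrac12\big(P_{\mathbb{Z}}^2-K_SP_{\mathbb{Z}}\big)+1-h^2(P_{\mathbb{Z}}).$$
The term $h^2(P_{\mathbb{Z}})=h^0(K_S-P_{\mathbb{Z}})$ vanishes: $-K_S$ is nef and big, hence effective by Riemann--Roch, and $P_{\mathbb{Z}}$ is big, so $(K_S-P_{\mathbb{Z}})\cdot H<0$ for an ample $H$.

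The main step is to show $-K_S\cdot P_{\mathbb{Z}}\ge 0$, or at least that $P_{\mathbb{Z}}^2-K_SP_{\mathbb{Z}}>-2$. Write $P_{\mathbb{Z}}=P+M$. Since $P$ is nef and $-K_S$ is effective, $-K_S\cdot P\ge 0$. For $M$, the components of $N$ are negative curves on a weak del Pezzo surface, so by adjunction they are $(-1)$-curves (with $-K_S C=1$) or $(-2)$-curves (with $-K_S C=0$). Since $M\ge 0$ is supported on these curves, $-K_S\cdot M\ge 0$. Hence $-K_S\cdot P_{\mathbb{Z}}\ge 0$ and $h^0(P_{\mathbb{Z}})\ge \tfrac12 P_{\mathbb{Z}}^2+1>0$. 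This is the step to check carefully: $M$ is a $\mathbb{Q}$-divisor, but the intersection is still linear, so the sign argument goes through. An alternative route is to use $-K_S$ nef directly, since $-K_S\cdot P_{\mathbb{Z}}\ge 0$ whenever $P_{\mathbb{Z}}$ is pseudoeffective, which it is because it is big. This second argument is simpler and I would use it.

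With $P_{\mathbb{Z}}$ effective, big and $\mathbb{Z}$-positive, Theorem \ref{thm:Enokizono} gives $H^1(S,\mathcal{O}_S(-P_{\mathbb{Z}}))=0$. Lemma \ref{lem:Miyaoka} with $Q=P_{\mathbb{Z}}$ then shows that $D$ is a Miyaoka--Sakai divisor.
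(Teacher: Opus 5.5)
Your argument is essentially the paper's proof: integral Zariski decomposition, $H^1(S,\mathcal{O}_S)=0$ by rationality, Riemann--Roch with $-K_S$ nef to make $P_{\mathbb{Z}}$ effective, then Theorem~\ref{thm:Enokizono} and Lemma~\ref{lem:Miyaoka}; you also usefully make explicit the vanishing of $h^2(P_{\mathbb{Z}})$ and the trivial case $H^1(S,\mathcal{O}_S(-D))=0$ with $B=0$, both of which the paper leaves implicit. One slip: $P_{\mathbb{Z}}^2\ge P^2$ is backwards (since $PM=0$ and $M^2\le 0$, in fact $P_{\mathbb{Z}}^2=P^2+M^2\le P^2$), but what you actually need, $P_{\mathbb{Z}}^2\ge D^2>0$, follows from $D^2=P_{\mathbb{Z}}^2+2P_{\mathbb{Z}}N_{\mathbb{Z}}+N_{\mathbb{Z}}^2$ with $P_{\mathbb{Z}}N_{\mathbb{Z}}\le 0$ and $N_{\mathbb{Z}}^2\le 0$.
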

\begin{proof}
Let $D = P_\mathbb{Z} + N_{\mathbb{Z}}$ be the integral Zariski decomposition with $P_{\mathbb{Z}}$ the $\mathbb{Z}$-positive part. Since $S$ is rational, $\dim H^1(S,\mathcal{O}_S)_n \le \dim H^1(S,\mathcal{O}_S) = 0$. It suffices to show that $P_\mathbb{Z}$ is effective. By Riemann-Roch theorem, we get 
$$\dim H^0(S, \mathcal{O}_S(P_\mathbb{Z})) \ge \frac{1}{2}(P_\mathbb{Z}^2 - K_S P_\mathbb{Z}) +\chi(\mathcal{O}_S)\ge \frac12P_\mathbb{Z}^2 + 1>1.$$ 
By Theorem \ref{thm:Enokizono}, $H^1(S, \mathcal{O}_S(-P_{\mathbb{Z}})) = 0$. By Lemma \ref{lem:Miyaoka}, $D$ is a Miyaoka-Sakai divisor.
\end{proof}

In the rest of this section, we show Miyaoka-Sakai vanishing holds on minimal surfaces of Kodaira dimension zero.

\begin{proposition}[Miyaoka-Sakai Vanishing on $K3$ and Enriques Surfaces]
Assume that $S$ is a $K3$ surface or an Enriques' surface as defined in \cite{Bombieri1977}, and $D$ is a big divisor on $S$ with $D^2>0$. Then $D$ is a Miyaoka-Sakai divisor.
\end{proposition}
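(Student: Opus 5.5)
We follow the pattern of the weak del Pezzo and ruled-surface cases. If $H^1(S,\mathcal{O}_S(-D))=0$ we take $B=0$ and $D$ is trivially a Miyaoka-Sakai divisor. Otherwise we verify the hypothesis of Corollary \ref{cor:EnokizonoToMSv}, namely
$$\dim H^0(S,\mathcal{O}_S(D))>\dim H^1(S,\mathcal{O}_S)_n.$$
Equivalently, we apply Lemma \ref{lem:Miyaoka} with $Q=P_{\mathbb{Z}}$, the $\mathbb{Z}$-positive part of the integral Zariski decomposition $D=P_{\mathbb{Z}}+N_{\mathbb{Z}}$. By Lemma \ref{lem:h0D=h0P} it is enough to bound $h^0(S,\mathcal{O}_S(D))$ from below.

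The first step is Riemann-Roch. For a $K3$ surface, $K_S\sim 0$ and $\chi(\mathcal{O}_S)=2$. For an Enriques surface in the sense of \cite{Bombieri1977}, $K_S\equiv 0$ numerically and $\chi(\mathcal{O}_S)=1$, in every characteristic, including the classical, singular and supersingular cases when $p=2$. Hence
$$h^0(D)-h^1(D)+h^2(D)=\tfrac12 D^2+\chi(\mathcal{O}_S).$$
Since $D$ is big, $h^2(D)=h^0(K_S-D)=0$, because $K_S-D$ is numerically $-D$ and cannot be effective. Thus
$$h^0(D)\ge \tfrac12D^2+\chi(\mathcal{O}_S)\ge 2.$$
The last inequality holds because $D^2>0$ is even on these surfaces: $D^2=2\chi(D)-2\chi(\mathcal{O}_S)$ with $K_S\equiv0$, so in fact $D^2\ge 2$.

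The second step bounds the nilpotent part. For a $K3$ surface $H^1(S,\mathcal{O}_S)=0$, so $\dim H^1(S,\mathcal{O}_S)_n=0$. For an Enriques surface, $h^1(\mathcal{O}_S)\le 1$ by Bombieri-Mumford: it is $0$ when $p\ne2$ and for classical Enriques surfaces, and $1$ for singular ($\mu_2$) and supersingular ($\alpha_2$) surfaces in characteristic $2$. In every case $\dim H^1(S,\mathcal{O}_S)_n\le 1$. So $h^0(S,\mathcal{O}_S(D))\ge 2>\dim H^1(S,\mathcal{O}_S)_n$, and Corollary \ref{cor:EnokizonoToMSv} shows that $D$ is a Miyaoka-Sakai divisor.

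The main obstacle is the Enriques case in characteristic $2$. There one needs the structure of $H^1(\mathcal{O}_S)$ from \cite{Bombieri1977}, specifically that it is at most one-dimensional, and the precise bound $h^0(D)\ge 2$. This relies on parity: $D^2$ is even because $K_S$ is numerically trivial, via $D(D-K_S)\equiv D^2$ being even. If evenness were unavailable, one would only get $h^0\ge \tfrac12+1$, which still forces $h^0\ge 2$ by integrality. Either way the inequality is strict, so the argument closes.
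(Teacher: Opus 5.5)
Your proposal is correct and follows essentially the paper's argument. Both use a Riemann--Roch lower bound on $h^0$ (the paper applies it to $P_{\mathbb{Z}}$, you apply it to $D$, which is equivalent by Lemma \ref{lem:h0D=h0P}), compare it with $h^1(S,\mathcal{O}_S)$ via the Bombieri--Mumford classification, and conclude with Corollary \ref{cor:EnokizonoToMSv}; your explicit handling of the case $H^1(S,\mathcal{O}_S(-D))=0$ by $B=0$, and your use of the bound $h^1(S,\mathcal{O}_S)\le 1$ instead of the paper's $\chi(\mathcal{O}_S)\ge h^1(S,\mathcal{O}_S)$, are only cosmetic differences.
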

\begin{proof}
By Lemma \ref{lem:h0D=h0P} and Corollary \ref{cor:EnokizonoToMSv}, it suffices to show that $\dim H^0(S, \mathcal{O}_S(P_{\mathbb{Z}}))> q(S) = \dim H^1(S,\mathcal{O}_S)$.

Since $K_S$ is numerically trivial, by the Riemann-Roch theorem, we have
    $$
   \dim H^0(S, \mathcal{O}_S(P_{\mathbb{Z}}))\ge \frac{1}{2}P_{\mathbb{Z}}^2 + \chi(\mathcal{O}_S) > \chi(\mathcal{O}_S) \ge \dim H^1(S, \mathcal{O}_S)\ge\dim H^1(S, \mathcal{O}_S)_n. 
    $$ 
    Here, the inequality $\chi(\mathcal{O}_S) \ge \dim H^1(S, \mathcal{O}_S)$ follows from the classification theorem (\cite[Theorem 5]{Bombieri1977}).
\end{proof}

\begin{proposition}[Kawamata-Viehweg Vanishing on Abelian and Hyperelliptic Surfaces]\label{prop:AbelianHyperelliptic}
     Suppose that $S$ is an Abelian surface or a hyperelliptic surface and $D$ is a big divisor on $S$ with $D^2>0$. Then $H^1(S, \mathcal{O}_S(-D))=0$. 
     Moreover, $H^1(S, \mathcal{O}_S(-\lceil M \rceil)) = 0$ for any nef and big $\mathbb{Q}$-divisor $M$ on $S$.
\end{proposition}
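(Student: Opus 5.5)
The plan is to show that on these surfaces every effective divisor is nef. Then any big divisor is nef, and Corollary \ref{cor:Effective_Mumford} applies directly. Since $\kappa(S)=0$ in both cases, the definition of $C_S$ gives $C_S=0$. So Corollary \ref{cor:Effective_Mumford} yields $H^1(S,\mathcal{O}_S(-D))=0$ for every nef $D$ with $D^2>0$, and no further vanishing input is needed.

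\emph{Step 1: $C^2\ge 0$ for every irreducible curve $C$ on $S$.} Suppose $C^2<0$. Since $K_S\equiv 0$, adjunction gives $2p_a(C)-2=C^2<0$. Hence $p_a(C)=0$, so $C$ is a smooth rational curve with $C^2=-2$. An abelian surface contains no rational curves, because any map $\mathbb{P}^1\to S$ factors through the Albanese, which is $S$ itself, and is therefore constant. For a (quasi-)hyperelliptic surface, I would use the Albanese fibration $S\to \mathrm{Alb}(S)$ onto an elliptic curve from \cite{Bombieri1977}. A rational curve cannot dominate an elliptic curve, so $C$ is contained in a fiber. The fibers of this fibration are irreducible (possibly multiple), so $C$ is the reduced support of a fiber, and $C^2=0$. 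This contradicts $C^2<0$. Alternatively, for the hyperelliptic case one can pull back along the étale cover $E_1\times E_2\to S$: $C^2=(\pi^*C)^2/\deg\pi\ge 0$. This step is where the classification is used, and I expect it to be the only real obstacle, especially checking the quasi-hyperelliptic case in characteristic $2,3$ via the fiber irreducibility.

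\emph{Step 2: big implies nef.} By Step 1, every effective divisor $G=\sum a_iC_i$ is nef: for an irreducible $C$, either $C\ne C_i$, giving $C_iC\ge0$, or $C=C_i$, giving $C^2\ge 0$. If $D$ is big, some $mD$ is linearly equivalent to an effective divisor, hence $mD$ is nef, hence $D$ is nef. Together with $D^2>0$ and $C_S=0$, Corollary \ref{cor:Effective_Mumford} gives $H^1(S,\mathcal{O}_S(-D))=0$.

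\emph{Step 3: the $\mathbb{Q}$-divisor statement.} Let $M$ be nef and big, and write $\lceil M\rceil = M+\Delta$ with $\Delta$ an effective $\mathbb{Q}$-divisor. By Step 1, $\Delta$ is nef, so $\lceil M\rceil$ is nef. Moreover
$$\lceil M\rceil^2=M^2+2M\Delta+\Delta^2\ge M^2>0.$$
Applying Corollary \ref{cor:Effective_Mumford} to the integral nef divisor $\lceil M\rceil$ gives $H^1(S,\mathcal{O}_S(-\lceil M\rceil))=0$.
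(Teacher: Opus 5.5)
Your proposal is correct and is essentially the paper's argument: rule out curves of negative self-intersection, conclude that effective and hence big divisors are nef, and treat $\lceil M\rceil = M+\Delta$ the same way. For the first step the paper uses the finite cover by an abelian surface, which is your stated alternative, and your adjunction-plus-Albanese argument also works. The only substantive difference is the vanishing input at the end: you apply Corollary \ref{cor:Effective_Mumford} with $C_S=0$ (resting on Bogomolov instability), while the paper cites Mukai's Frobenius-based Mumford--Ramanujam vanishing (\cite[Theorem 3]{Mukai2013}, Corollary \ref{cor:MRVanishing}). Both apply here, and passing to an effective multiple $mD$ spares you the paper's Riemann--Roch step.
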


\begin{proof}
     From the classification theorem (\cite[Theorem 6 and the Proposition below it]{Bombieri1977}), we know that
     $K_S$ is numerically trivial and $\chi(\mathcal{O}_S)=0$. Since $D$ is big and $D^2 > 0$, by the Riemann-Roch theorem, we see that
    $$
    \dim H^0(S, \mathcal{O}_S(D)) = \frac{1}{2}D^2 + \dim H^1(S,\mathcal{O}_S(D)) >0.
    $$
    Therefore, $D$ is effective.

    From the classification theorem (\cite[Theorem 4]{Bombieri1977}), we know that there is an unramified finite covering: $\pi: A \to S$ where $A$ is an Abelian surface. Because $A$ is Abelian, there is no negative definite curves on it. Therefore, any effective divisor is nef.
    For any irreducible curve $C$ on $S$, we have $(\deg \pi)C^2 = (\pi^*(C))^2 \ge 0$. Therefore, the effective divisor $D$ on $S$ is also nef.
    
    Because $D$ is nef and big, by \cite[Theorem 3]{Mukai2013} (see also Corollary \ref{cor:MRVanishing}), we obtain $H^1(S, \mathcal{O}_S(-D))=0$.

    Write the roundup of $M$ as $\lceil M \rceil = M + \Delta.$  Because $\Delta$ is effective, it is then nef. Consequently, $\lceil M\rceil$ is nef and big. Therefore, $H^1(S, \mathcal{O}_S(-\lceil M\rceil))=0$.
\end{proof}

\section{Weak Miyaoka-Sakai theorem\label{sec:MiyaokaSakai=MumfordRamanujam}}

Let $F: S \to S$ be the Frobenius morphism. The natural derivation $d : \mathcal{O}_S \to \Omega_S$ induces a morphism $F_*d: F_*\mathcal{O}_S\to F_*\Omega_S$. We denote by $\mathcal{B}_S^1$ the image of $F_*d$. By \cite[Proposition 3]{Tango1972}), we have the following short exact sequence
\begin{equation}\label{eq:deRham_augment}
\begin{tikzcd}[column sep = 2em]
0 \arrow[r] & \mathcal{O}_S \arrow[r] & F_*\mathcal{O}_S \arrow[r] & \mathcal{B}_S \arrow[r] & 0.
\end{tikzcd}
\end{equation}

For any divisor $D$ on $S$, twisting the exact sequence \eqref{eq:deRham_augment} by $\mathcal{O}_S(D)$ and taking cohomology, we obtain the following long exact 
sequence:
\begin{equation}\label{eq:Frobenius}
\begin{tikzcd}[column sep = 2em]
0 \arrow[r] &
H^0(S,\mathcal{O}_S(-D)) \arrow[r] &
H^0 \big(S,F_*(\mathcal{O}_S(-pD))\big) \arrow[r] &
H^0 \big(S,\mathcal{B}_S(-D)\big)
\arrow[from=1-4, to=2-2, rounded corners,
    to path={ 
    -- ([xshift=2ex]\tikztostart.east)
    |- ([yshift=-2ex]\tikztostart.south) 
    -| ([xshift=-2ex]\tikztotarget.west) node[pos=0.25, fill=white, inner sep=1pt] {$\delta$}
    -- (\tikztotarget)
}]
\\
&
H^1(S,\mathcal{O}_S(-D)) \arrow[r] &
H^1 \big(S,F_*(\mathcal{O}_S(-pD))\big) \arrow[r] &
H^1 \big(S,\mathcal{B}_S(-D)\big),
\end{tikzcd}
\end{equation}
where $\delta$ is the coboundary
map. We denote by $K(D)$ the kernel of $F_*: H^1(S, \mathcal{O}_S(-D)\to H^1(S, F_*(\mathcal{O}_S(-p D))$. Then $K(D) = \delta(H^0(S, \mathcal{B}_S(-D)))$. If $D$ is big divisor or effective, then $$H^0(S, F_*(\mathcal{O}_S(-pD)))=H^0(S, \mathcal{O}_S(-pD)) = 0.$$ Therefore, $\ker \delta = 0$ and $$K(D) = H^0(S, \mathcal{B}_S(-D)).$$

From the long exact sequence \eqref{eq:Frobenius}, 
we see that $H^1(S, \mathcal{O}_S(-D)) = 0$ 
if $H^1(S, \mathcal{O}_S(-p^e D)) = 0$ for some positive integer $e$ 
and $H^0(S, \mathcal{B}_S(-p^kD))=0$ for all $0\le k\le e-1$. 

For a divisor $D$, in \cite[Section 1.2]{Mukai2013}, Mukai shows that 
$$\mathcal{B}_S(-D) = F_*(d Q(S)\cap \Omega_S(-pD)),$$
where $Q(S)$ is the function field of $S$.
Consequently,
\begin{equation}\label{eq:MukaiH^0B_S}
H^0(S, \mathcal{B}_S(-D)) = \{d f \in \Omega_{Q(S)}\mid f\in Q(S), d f\in H^0(S, \Omega_S(-p D))\}.
\end{equation}
Using this description of $H^0(S, \mathcal{B}_S(-D))$, Mukai shows that $H^0(S, \mathcal{B}_S(-D))=0$ when $D$ is nef and big which leads to a proof of Mumford-Ramanujam vanishing theorem \cite[Theorem 3]{Mukai2013}. In the following, we will show that $H^0(S, \mathcal{B}_S(-D))=0$ for any big divisor $D$ with $D^2>0$. This vanishing leads to a weak version of Miyaoka-Sakai theorem which also implies Mumford-Ramanujam vanishing theorem.

We first recall the following lemma of Sakai which still holds in positive characteristic.

\begin{lemma}[{\cite[Lemma 3]{Sakai1990}}]\label{lem:Sakai-big}
    Let $\pi: S \to X$ be a birational morphism. Let $D$ be a divisor on $S$, and set $D_X=\pi_*D$.
    \begin{enumerate}
        \item If $D$ is big, then $D_X$ is big.
        \item If $D^2>0$ and $D_X$ is big, then $D$ is big.
    \end{enumerate}
\end{lemma}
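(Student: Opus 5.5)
The lemma is characteristic-free, so I plan to prove both parts using only the projection relation between sections on $S$ and on $X$, together with the bigness criterion \cite[Proposition 2.2]{Badescu2001}, which is already used in the proof of Theorem \ref{thm:Effective_Sakai}. That criterion says that a divisor with positive self-intersection and positive degree on some ample divisor is big. I do not expect to need the factorization of $\pi$ into blowups except to know that $\pi$ is an isomorphism off finitely many points of $X$.

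For (1), I compare global sections directly. A nonzero section of $\mathcal{O}_S(mD)$ corresponds to a rational function $f\in Q(S)=Q(X)$ with $\operatorname{div}_S(f)+mD\ge 0$. Pushing forward divisors preserves effectivity and commutes with taking $\operatorname{div}$ of a rational function. Hence $\operatorname{div}_X(f)+m\pi_*D\ge 0$. This gives an injection $H^0(S,\mathcal{O}_S(mD))\hookrightarrow H^0(X,\mathcal{O}_X(mD_X))$, compatible with the identification of function fields. Therefore $\dim H^0(X,\mathcal{O}_X(mD_X))\ge \dim H^0(S,\mathcal{O}_S(mD))$, which grows like $m^2$, and so $D_X$ is big.

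For (2), I first show that $D_X\cdot A>0$ for an ample divisor $A$ on $X$. Since $D_X$ is big, some multiple satisfies $kD_X\sim A'+N$ with $A'$ ample and $N$ effective (Kodaira's lemma, whose proof is characteristic-free). So $kD_X\cdot A\ge A'\cdot A>0$. By the projection formula, $D\cdot\pi^*A=\pi_*D\cdot A=D_X\cdot A>0$.

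The divisor $\pi^*A$ is only nef and big on $S$, so I perturb it. Fix an ample $H_0$ on $S$. Then $H_\epsilon=\pi^*A+\epsilon H_0$ is ample for every rational $\epsilon>0$, and $D\cdot H_\epsilon=D_X\cdot A+\epsilon\, D\cdot H_0>0$ for $\epsilon$ small; after clearing denominators this gives an integral ample $H$ with $DH>0$. Combined with $D^2>0$, \cite[Proposition 2.2]{Badescu2001} gives that $D$ is big. To be self-contained, I would instead argue by Riemann--Roch. It gives $\dim H^0(mD)+\dim H^0(K_S-mD)\ge \tfrac{m^2}{2}D^2-\tfrac{m}{2}DK_S+\chi(\mathcal{O}_S)$. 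The second term vanishes for $m\gg0$ because $(K_S-mD)H<0$, so $\dim H^0(mD)$ grows quadratically.

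The only real subtlety is the positivity $D\cdot H>0$ for a genuinely ample $H$ on $S$. Here one has to use that $\pi^*A$ is nef but not ample and perturb it as above. The hypothesis $D^2>0$ cannot be dropped. For example, $D=-E$ for an exceptional curve $E$ has $D_X=0$; more to the point, adding to a big divisor a sufficiently negative multiple of an exceptional curve leaves $D_X$ unchanged but makes $D$ non-big, and the condition $D^2>0$ is exactly what rules this out.
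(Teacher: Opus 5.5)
Your proof is correct and complete. The paper does not prove this lemma itself. It only quotes \cite[Lemma 3]{Sakai1990} and asserts that it still holds in positive characteristic, and your argument supplies exactly that verification. Everything you use is characteristic-free: the inclusion $\pi_*\mathcal{O}_S(mD)\subseteq\mathcal{O}_X(mD_X)$ coming from $\pi_*\operatorname{div}_S(f)=\operatorname{div}_X(f)$, Kodaira's lemma, the projection formula, and Riemann--Roch with Serre duality on $S$.

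Two remarks on the route. First, you do not need to perturb $\pi^*A$ to an ample class, so the ``only real subtlety'' you point to is not really one. Since $\pi^*A$ is nef and $(K_S-mD)\cdot\pi^*A=K_S\cdot\pi^*A-m\,D_X\cdot A<0$ for $m\gg0$, the class $K_S-mD$ is not effective for such $m$, and that is all your Riemann--Roch step uses.

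Second, there is a shorter standard route that deduces (2) from (1). Riemann--Roch together with the Hodge index theorem shows that $D^2>0$ forces $D$ or $-D$ to be big: for an ample $H$ on $S$, Hodge index gives $D\cdot H\neq 0$, and the sign decides which. If $-D$ were big, then by (1) so would be $-D_X$, which is impossible when $D_X$ is big, since their sum $0$ would then be big. That version needs neither Kodaira's lemma nor the projection formula, but it uses Hodge index. Yours avoids Hodge index by exhibiting the positivity $D\cdot\pi^*A=D_X\cdot A>0$ directly.

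A minor point: your example $D=-E$ does not show that $D^2>0$ is needed, since $D_X=0$ is not big. The example $\pi^*L-kE$ with $L$ big and $k\gg0$ is the relevant one.
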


\begin{lemma}\label{lem:mukai}
Assume that $S$ is a (relative) minimal surface of Kodaira dimension $\kappa(S)\le 1$ but not quasi-elliptic if $\kappa(S) = 1$. Let $D$ be a big divisor on $S$ with $D^2>0$. Then
    $$H^0(S, \mathcal{B}_S(-D)) = 0.$$
\end{lemma}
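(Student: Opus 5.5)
The plan is to argue by contradiction, following Mukai's treatment of $H^0(S,\mathcal{B}_S(-D))$ in \cite[Section 1.2]{Mukai2013}, but replacing nefness of $D$ by bigness together with the hypothesis on $\kappa(S)$. Suppose $0\neq df\in H^0(S,\mathcal{B}_S(-D))$. By \eqref{eq:MukaiH^0B_S}, $f\in Q(S)\setminus Q(S)^p$ and $df$ is a regular section of $\Omega_S(-pD)$. Let $\mathcal{L}=\mathcal{O}_S(L)\subset\Omega_S$ be the saturation of the rank one subsheaf generated by $df$. Then $L=pD+E$ with $E\ge 0$, so $L$ is big. Moreover there is an exact sequence
$$0\to \mathcal{O}_S(L)\to \Omega_S\to \mathcal{I}_Z\otimes\mathcal{O}_S(K_S-L)\to 0$$
with $Z$ zero-dimensional. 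The first goal is to produce a big line subsheaf of $\Omega_S$ that is a \emph{closed} (indeed exact) form, and then derive a contradiction from the geometry of $S$.

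Next I would use exactness of $df$. The annihilator $\mathcal{F}\subset T_S$ of $\mathcal{L}$ consists of the derivations killing $Q(S)^p(f)$, so it is a $p$-closed foliation of rank one. By Ekedahl's correspondence, the quotient $\pi:S\to Y=S/\mathcal{F}$ is a purely inseparable morphism of degree $p$ onto a normal surface, and the Frobenius factors as $S\to Y\to S^{(p)}$. We have $c_1(\mathcal{F})=L-K_S$ up to the points $Z$, and the canonical bundle formula
$$K_S\sim \pi^*K_Y+(p-1)(K_S-L)$$
gives
$$(p-1)L\equiv \pi^*K_Y+(p-2)K_S.$$
Since $L$ is big, the right-hand side is big. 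Alternatively, and more elementarily, the rational map $f:S\dashrightarrow\mathbb{P}^1$ has $df$ vanishing along its fibres to high order. Then $L$ is bounded above by $K_S$ minus the relative contribution of the pencil, which should again force $K_S$, or the canonical class of a model of $Y$, to be "too positive".

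The remaining step is the case analysis using that $S$ is relatively minimal. If $\kappa(S)\in\{0,1\}$, then $K_S$ is nef with $K_S^2=0$, so $K_S$ is not big. I would then need to show that $\pi^*K_Y$ is not big either. Pulling back through $Y\to S^{(p)}$ compares $K_Y$ with $K_{S^{(p)}}$ plus a correction from the conductor/foliation, which forces $\kappa(Y)\le\kappa(S)\le 1$. This contradicts bigness of $(p-1)L$.

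If $\kappa(S)=-\infty$, $S$ is $\mathbb{P}^2$ or a ruled surface $\varphi:S\to C$. I would restrict the inclusion $\mathcal{O}_S(L)\subset\Omega_S$ to a general fibre $F\cong\mathbb{P}^1$. Since $LF\ge pDF>0$ (a big divisor meets the nef fibre class positively), the composition $\mathcal{O}_F(L)\to\Omega_S|_F\to\Omega_F=\mathcal{O}_F(-2)$ must vanish. Hence $\mathcal{L}\subset\varphi^*\Omega_C$ along $F$, so $L\le\varphi^*K_C$, which has $L^2\le 0$ and contradicts bigness. For $\mathbb{P}^2$, $\Omega_{\mathbb{P}^2}(1)$ has no sections, so any $\mathcal{O}(a)\subset\Omega$ has $a\le -2$ and cannot be big.

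The main obstacle I expect is the $\kappa(S)=1$ case. There the inseparable quotient $Y$ can a priori have larger Kodaira dimension (Zariski-surface phenomena). This is exactly where the non-quasi-elliptic hypothesis must enter. For an elliptic fibration $S\to C$ with smooth generic fibre, I would first try restricting to a general fibre $F$. A nonzero map $\mathcal{O}_F(L)\to\Omega_F=\mathcal{O}_F$ is impossible since $LF>0$, so $\mathcal{L}\subset\varphi^*\Omega_C$ as in the ruled case, again giving $L^2\le 0$. This fails precisely when general fibres are cuspidal, i.e. in the quasi-elliptic case.
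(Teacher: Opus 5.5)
Your arguments for $\mathbb{P}^2$ and for the fibred cases are exactly the paper's. By the fibred cases I mean ruled surfaces, non-quasi-elliptic elliptic surfaces, and also hyperelliptic and quasi-hyperelliptic surfaces via their fibration with smooth elliptic fibres. There $LF\ge pDF>0$ forces both $\mathcal{O}_F(L)\to\Omega_F$ and $\mathcal{O}_F(L)\to\varphi^*\Omega_C|_F\cong\mathcal{O}_F$ to vanish, which is already a contradiction. (The conclusion ``$L^2\le 0$'' does not follow from $L\le\varphi^*K_C$; but $L\cdot F\le 0$ does, and it suffices.)

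The genuine gap is $\kappa(S)=0$ for abelian, K3 and Enriques surfaces. Abelian and K3 surfaces need not carry any fibration, and for them you only offer the inseparable-quotient argument, which fails for two reasons.

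First, your canonical bundle formula has the wrong sign. With $c_1(\mathcal{F})=L-K_S$ it reads $K_S\sim\pi^*K_Y+(p-1)c_1(\mathcal{F})$; test it on $\mathbb{P}^1\times\mathbb{P}^1$ with $\mathcal{F}=\mathrm{pr}_1^*T_{\mathbb{P}^1}$. Hence $(p-1)L\equiv pK_S-\pi^*K_Y$. For $K_S\equiv 0$, bigness of $L$ is therefore equivalent to bigness of $-\pi^*K_Y$. That is perfectly compatible with $\pi^*K_Y$ not being big, so no upper bound on $\kappa(Y)$ produces a contradiction; ruling out ``$-K_Y$ big'' is just the lemma restated.

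Second, the bound $\kappa(Y)\le\kappa(S)$ is false anyway. The Zariski-surface phenomenon you flag for $\kappa(S)=1$ occurs equally for $\kappa(S)=0$: for a general $f$ with ample polar divisor, $-L$ is big, so $\pi^*K_Y\equiv-(p-1)L$ is big.

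The missing ingredients are the two arguments the paper uses.
\begin{itemize}
\item Abelian surfaces: $\Omega_S\cong\mathcal{O}_S^{\oplus 2}$, so a nonzero section of $\Omega_S(-pD)$ would give a nonzero section of $\mathcal{O}_S(-pD)$. This is impossible for $D$ big; equivalently, $\mathcal{O}_S(L)\subset\mathcal{O}_S^{\oplus 2}$ forces $-L$ to be effective.
\item K3 and Enriques surfaces: exactness enters not through a foliation but through Mukai's identification $H^0(S,\mathcal{B}_S)\cong\ker\bigl(H^1(S,\mathcal{O}_S)\to H^1(S,F_*\mathcal{O}_S)\bigr)$, whose dimension is at most $q$. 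A nonzero $df\in H^0(S,\mathcal{B}_S(-D))$ gives an injection $\mathcal{O}_S(D)\hookrightarrow\mathcal{B}_S$, $g\mapsto g^p\,df=d(g^pf)$, hence $h^0(\mathcal{O}_S(D))\le q$. But Riemann--Roch with $K_S\equiv 0$ and $h^0(\mathcal{O}_S(K_S-D))=0$ gives $h^0(\mathcal{O}_S(D))\ge\frac12D^2+\chi(\mathcal{O}_S)>\chi(\mathcal{O}_S)\ge q$, a contradiction.
\end{itemize}
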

\begin{proof}
From the definition of $\mathcal{B}_S$, we know that
$$H^0(S, \mathcal{B}_S(-D))\subseteq H^0(S, \Omega_S(-pD)).$$

By the classification of algebraic surfaces (see \cite{Bombieri1977}), we know that $X$ is the projective plane, a ruled surface, an Abelian surface, a $K3$ surface, an Enriques surface, a hyperelliptic surface, a quasi-hyperelliptic surface, or an elliptic surface.

     \begin{enumerate}
     \item If $S$ is the projective plane, then $H^0(S,\Omega_S(-pD))=0$ from the twisted Euler sequence. Therefore, $H^0(S, \mathcal{B}_S(-D))=0$. 
        \item Suppose $S$ is ruled, elliptic, hyperelliptic, or quasi-hyperelliptic. Then it admits a fibration $f: S\to C$ whose fibers has genus $0$ if it is rule or $1$ otherwise. 
        Denote by $F$ a general fiber. 
        Consider the exact sequence $$0\to f^*\Omega_C\to \Omega_S\to \Omega_{S/C}\to 0.$$ 
        Since $F$ is smooth of genus at most $1$, $\deg\Omega_{S/C}|_F = \deg\Omega_F \le 0$.
        If 
        $$H^0(S, f^*\Omega_C(-D))\neq 0 \quad \text{or} \quad H^0(S, \Omega_{S/C}(-D))\neq 0,$$ 
        then $$\deg (f^*\Omega_C(-D))|_F = -DF\ge 0 \quad \text{or} \quad\deg (\Omega_{S/C}(-D))|_F = \deg\Omega_F -DF\ge 0.$$ It follows that $D F\le 0$. 
        Since $D$ is big, there is an effective $\mathbb{Q}$-divisor $E$ and an ample $\mathbb{Q}$-divisor $A$ such that $D = A + E$. Since $F$ is a fiber, we know that $F$ is nef. Therefore, $DF = A F + E F > 0$ which leads to a contradiction. Therefore, $H^0(S, \Omega_S(-D))=0$ which implies the desired conclusion.

    \item If $S$ is Abelian, then $\Omega_S$ is trivial. If $H^0(S, \Omega_S(-D))\ne 0$, then $H^0(S, \mathcal{O}_S(-D))\ne 0$. It follows that $HD\le 0$ for an ample divisor $H$. By Hodge index theorem, we get $D^2\le 0$ which contradicts the assumption that $D^2>0$. Therefore, $H^0(S, \Omega_S(-D))=0$ which again leads to the desired conclusion.

\item Suppose $S$ is $K3$ or Enriques. If $H^0(S, \mathcal{B}_S(-D))\ne 0$, then a nonzero section $s$ defines a nontrivial morphism $\phi: \mathcal{O}_S(D)\to \mathcal{B}_S$ by multiplying $s$. Since $\mathcal{B}_S$ is torsion-free sheaf, the image $\operatorname{im}(\phi)$ as a subsheaf of $\mathcal{B}_S$ is also torsion-free. This forces $\ker(\phi)=0$. Therefore, $\phi$ is injective and 
$$\dim H^0(S,\mathcal{B}_S)\ge \dim H^0(S, \mathcal{O}_S(D)).$$
Since $K_S$ is numerically trivial, by Riemann-Roch Theorem, we have
$$\dim H^0(S, \mathcal{O}_S(D))\ge \frac{1}{2}D^2 + \chi(\mathcal{O}_S) - \dim H^0(S, \mathcal{O}_S(K_S-D)).$$
Because $-(K_S-D)\equiv D$ is big, we must have $H^0(S, \mathcal{O}_S(K_S-D)) = 0$.
By \cite{Bombieri1977}, we know that $q< \chi(\mathcal{O}_S)$. Therefore,
$$\dim H^0(S, \mathcal{B}_S) \ge \dim H^0(S, \mathcal{O}_S(D)) > q = \dim H^1(S, \mathcal{O}_S).$$
However, by \cite[Lemma 1.14]{Mukai2013},
$H^0(S, \mathcal{B}_S) = \ker\big[H^1(S, \mathcal{O}_S)\to H^1(S, F_*\mathcal{O}_S)\big]$. That is a contradiction. We conclude that $H^0(S, \mathcal{B}_S(-D))= 0$. 
\end{enumerate}
\end{proof}

The following lemma is derived from the proof of \cite[Proposition 3.2]{Mukai2013}.

\begin{lemma}\label{lem:Mukai_VanishingB_S}
    Let $\pi: S\to X$ be a contraction of a $(-1)$-curve $E$ and $D$ be a 
    divisor on $S$. Set $D_X = \pi_*D$. If $H^0(X, \mathcal{B}_X(-D_X)) = 0$, then
    $H^0(S, \mathcal{B}_S(-D)) =0$. 
\end{lemma}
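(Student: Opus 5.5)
We argue by contraposition: assuming a nonzero element of $H^0(S,\mathcal{B}_S(-D))$, we produce a nonzero element of $H^0(X,\mathcal{B}_X(-D_X))$. The tool is Mukai's description \eqref{eq:MukaiH^0B_S}. Since $\pi$ is birational, $Q(S)=Q(X)$, and
$$H^0(S,\mathcal{B}_S(-D))=\{df \mid f\in Q(S),\ df\in H^0(S,\Omega_S(-pD))\},$$
and similarly on $X$ with $D_X$. So take $f\in Q(S)$ with $0\ne df$ and $df\in H^0(S,\Omega_S(-pD))$. We aim to show that $df$, viewed as a rational $1$-form on $X$, lies in $H^0(X,\Omega_X(-pD_X))$.

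First reduce to the open set. Write $D=\pi^*D_X+aE$ for some integer $a$, using $\pi^*\pi_*D=D+(\text{multiple of }E)$. Away from the point $x=\pi(E)$, $\pi$ is an isomorphism, so $df$ is a regular section of $\Omega_X(-pD_X)$ on $X\setminus\{x\}$. Now $X$ is a smooth surface and $\Omega_X(-pD_X)$ is locally free, hence reflexive. A section defined off a codimension-two point therefore extends uniquely to all of $X$ by Hartogs/normality. Hence $df\in H^0(X,\Omega_X(-pD_X))$ automatically, with no use of the exponent $a$. Moreover, $df=d(f)$ with $f\in Q(X)$, so $df$ lies in the set described by \eqref{eq:MukaiH^0B_S} for $X$. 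This gives a nonzero element of $H^0(X,\mathcal{B}_X(-D_X))$, a contradiction, so $H^0(S,\mathcal{B}_S(-D))=0$.

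The step to check carefully is the identification of sections across $\pi$: that restriction to $S\setminus E\cong X\setminus\{x\}$ is injective on $H^0(S,\Omega_S(-pD))$, which holds because the sheaf is torsion-free, and that Mukai's description applies verbatim on $X$, which only needs $X$ smooth projective. If one prefers a sheaf-theoretic argument, the alternative is to show $\pi_*\mathcal{B}_S(-D)\hookrightarrow (\pi_*\mathcal{B}_S(-D))^{\vee\vee}\subseteq \mathcal{B}_X(-D_X)$. Here one uses that $\mathcal{B}_X\subset F_*\Omega_X$ is saturated, being the kernel of the Cartier operator onto the locally free $\Omega_X$, hence reflexive on a smooth surface. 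With this, the conclusion again follows from $H^0(S,\mathcal{B}_S(-D))=H^0(X,\pi_*\mathcal{B}_S(-D))$. I expect this reflexivity/extension point to be the only real obstacle; the rest is bookkeeping.
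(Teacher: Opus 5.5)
Your proof is correct, but it takes a different route from the paper's. Both arguments use Mukai's description of $H^0(S,\mathcal{B}_S(-D))$ to reduce to the inclusion $H^0(S,\Omega_S(-pD))\subseteq H^0(X,\Omega_X(-pD_X))$ inside $\Omega_{Q(S)}$.

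The paper proves this inclusion by computation. It writes $D=\pi^*D_X-aE$ and uses the sequence $0\to\pi^*\Omega_X\to\Omega_S\to i_*\mathcal{O}_E(-2)\to 0$. If $a\ge 0$, it twists by $\mathcal{O}_S(-pD)$; the quotient term then has no sections, and the projection formula gives equality. If $a<0$, it twists by $\mathcal{O}_S(-p\pi^*D_X)$ and uses $\Omega_S(-pD)\subseteq\Omega_S(-p\pi^*D_X)$.

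You instead restrict to $S\setminus E\cong X\setminus\{x\}$, where $D$ and $D_X$ agree, and extend across the codimension-two point because $\Omega_X(-pD_X)$ is locally free on a normal surface. Injectivity of restriction follows from torsion-freeness.

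What your route buys:
\begin{itemize}
\item It needs neither the sequence of differentials nor the case split on $a$.
\item It applies verbatim to any birational morphism of smooth surfaces. So in Corollary~\ref{cor:generalizedMumford-Ramanujam} one could pass to the minimal model in one step instead of contracting $(-1)$-curves one at a time.
\item Since $\mathcal{B}_X$ is itself locally free on smooth $X$ (locally $\mathcal{O}_X$ is a direct summand of $F_*\mathcal{O}_X$), the same Hartogs argument applied directly to $\mathcal{B}_X(-D_X)$ proves the lemma without Mukai's description at all. This is cleaner than the Cartier-operator justification in your alternative, where strictly $C$ is defined on $F_*Z^1_X$ rather than on $F_*\Omega_X$.
\end{itemize}

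What the paper's computation buys in return is the sharper equality $H^0(S,\Omega_S(-pD))=H^0(X,\Omega_X(-pD_X))$ when $a\ge 0$. That equality is not used later.
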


\begin{proof}
Since $\pi$ is a birational morphism, the functional fields $Q(S)$ and $Q(X)$ can be identified as the same. 
By the equality \eqref{eq:MukaiH^0B_S}, it suffices to show that $H^0(S, \Omega_S(-pD)) \subseteq H^0(X, \Omega_X(-p D_X))$.

Note that $D = \pi^*D_X - a E$, for an integer $a$. Consider the short exact sequence
\begin{equation}
\label{eq:KahlerDifferentialSequence}
    0\to \pi^*\Omega_X \to \Omega_S \to i_*\Omega_{E}= i_*\mathcal{O}_E(-2)\to 0
\end{equation}

    If $a\ge 0$,
twisting the exact sequence \eqref{eq:KahlerDifferentialSequence} by $\mathcal{O}_S(-pD)$
 and then taking cohomology, we get the short exact sequence
    $$0\to H^0(S, \pi^*\Omega_X(-p(\pi^*D_X - a E)))\to H^0(S, \Omega_S(-p D)) \to H^0(E, \mathcal{O}_E(-2 - p a)).$$
Because $a\ge 0$, $H^0(E, \mathcal{O}_E(-2 - p a)) = 0$ which implies 
    $$
    H^0(S, \pi^*\Omega_X({-p(\pi^*D_X - a E))}) = H^0(S, \Omega_S(-p D)).
    $$
    By the projection formula,
    $$H^0(S, \Omega_S(-p D)) = H^0(S, \pi^*\Omega_X({-p(\pi^*D_X - a E))}) = H^0(X, \Omega_X(-p D_X)).$$

    If $a < 0$, twisting the exact sequence \eqref{eq:KahlerDifferentialSequence} by $\mathcal{O}_S(-p\pi^*D_X)$
 and then taking the cohomology, we get the short exact sequence
    $$0\to H^0(S, \pi^*\Omega_X(-p(\pi^*D_X)))\to H^0(S, \Omega_S(-p (D + a E))) \to H^0(E, \mathcal{O}_E(-2)) 
    $$
    which implies 
    $$
    H^0(S, \pi^*\Omega_X({-p \pi^*D_X))}) = H^0(S, \Omega_S(-p (D + aE))).
    $$
    Because $a < 0$, 
    $$H^0(S, \Omega_S(-p D))\subseteq H^0(S, \Omega_S(-p (D + aE))).$$
    By the projection formula, we have $H^0(S, \pi^*\Omega_X({-p \pi^*D_X))}) = H^0(X, \Omega_X(-p D_X)))$. 
    Therefore, 
    $$H^0(S, \Omega_S(-p D))\subseteq H^0(X, \Omega_X(-p D_X)) .$$
\end{proof}

\begin{corollary}[Generalized Mumford-Ramanujam Vanishing]\label{cor:generalizedMumford-Ramanujam}
    Let $D$ be a big divisor on $S$ such that $D^2>0$.
    Assume that $\kappa(S)\le 1$ and $S$ is not quasi-elliptic if $\kappa(S) =1$. 
    
    If $H^1(S, \mathcal{O}_S(-p^e D))=0$ for a nonnegative integer $e$, then $H^1(S, \mathcal{O}_S(-D))=0.$
\end{corollary}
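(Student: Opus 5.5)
The argument runs through the Frobenius long exact sequence \eqref{eq:Frobenius}. As observed after \eqref{eq:Frobenius}, $H^1(S,\mathcal{O}_S(-D))=0$ follows once $H^1(S,\mathcal{O}_S(-p^eD))=0$ and $H^0(S,\mathcal{B}_S(-p^kD))=0$ for all $0\le k\le e-1$. The precise route is a descending induction on $k$. Since $D$ is big, so is $p^kD$. Hence $H^0(S,F_*\mathcal{O}_S(-p^{k+1}D))=0$, and \eqref{eq:Frobenius} applied to $p^kD$ gives an injection $H^1(S,\mathcal{O}_S(-p^kD))\hookrightarrow H^1(S,\mathcal{O}_S(-p^{k+1}D))$ as soon as $H^0(S,\mathcal{B}_S(-p^kD))=0$. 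Starting from the hypothesis at level $e$ and descending to $k=0$ yields the claim; the case $e=0$ is trivial. Since each $p^kD$ is again big with $(p^kD)^2=p^{2k}D^2>0$, the whole statement reduces to the following claim: for every big divisor $D'$ on $S$ with $D'^2>0$, $H^0(S,\mathcal{B}_S(-D'))=0$.

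To prove the claim, I would pass to a relatively minimal model. Write the birational morphism $\pi:S\to X$ as a composition $S=S_0\to S_1\to\cdots\to S_m=X$ of contractions of $(-1)$-curves, with $X$ relatively minimal. Kodaira dimension is a birational invariant, as is quasi-ellipticity of the fibration when $\kappa=1$, because the elliptic or quasi-elliptic fibration of a $\kappa=1$ surface is canonical and descends. So $X$ satisfies the hypotheses of Lemma \ref{lem:mukai}. Set $D_i$ to be the pushforward of $D'$ to $S_i$. By Lemma \ref{lem:Sakai-big}(1), each $D_i$ is big. Moreover $D_i^2\ge D'^2>0$: writing $D_{i-1}=\phi^*D_i-aE$, we get $D_{i-1}^2=D_i^2-a^2$.

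Lemma \ref{lem:mukai} then gives $H^0(X,\mathcal{B}_X(-D_m))=0$. Applying Lemma \ref{lem:Mukai_VanishingB_S} repeatedly along the chain of contractions pulls this vanishing back step by step to $H^0(S,\mathcal{B}_S(-D'))=0$. Lemma \ref{lem:Mukai_VanishingB_S} needs no positivity hypothesis on $D$, so only the final step on $X$ uses bigness and $D_m^2>0$.

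The main obstacle is the reduction to the minimal model. Two things must be checked there. First, the relatively minimal model $X$ must genuinely be among the surfaces treated in Lemma \ref{lem:mukai}; in particular, when $\kappa(S)=1$ and $S$ is not quasi-elliptic, $X$ must be elliptic and not quasi-elliptic, and in the $\kappa=-\infty$ case $X$ must be $\mathbb{P}^2$ or ruled. Second, bigness and positive self-intersection must survive pushforward. The first point I would justify by the uniqueness of minimal models when $\kappa\ge0$ and the invariance of the Iitaka fibration; the second follows from Lemma \ref{lem:Sakai-big} together with the computation $D_{i-1}^2=D_i^2-a^2$ above.
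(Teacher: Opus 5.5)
Your proposal is correct and follows essentially the same route as the paper. You reduce via the Frobenius sequence \eqref{eq:Frobenius} to the vanishing $H^0(S,\mathcal{B}_S(-p^kD))=0$, obtain it on the minimal model from Lemma \ref{lem:mukai} after checking bigness and $D_X^2\ge D^2>0$ with Lemma \ref{lem:Sakai-big}, pull it back with Lemma \ref{lem:Mukai_VanishingB_S}, and conclude by descending induction on $k$. Two points the paper's proof leaves implicit are made explicit in your version: the factorization into successive $(-1)$-curve contractions, and the check that the relatively minimal model still satisfies the hypotheses of Lemma \ref{lem:mukai}.
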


\begin{proof}
Let $X$ be a minimal model of $S$.
Write $D_X = \pi_* D$ and $D = \pi^*D_X - E$, where $E$ is an exceptional divisor. Then $D_X^2 = D^2 - E^2 \ge D^2 >0$. By Lemma \ref{lem:Sakai-big}, $D_X$ is also big. Therefore, by Lemma \ref{lem:mukai}, we see that $H^0(X, \mathcal{B}_X(-p^kD_X))=0$ for any nonnegative integer $k$.

By Lemma \ref{lem:Mukai_VanishingB_S}, $$H^0(S, \mathcal{B}_S(-p^kD))=0$$ for any nonnegative integer $k$. It follows that the map
$$
\begin{tikzcd}[column sep = 2em]
    H^1(S, \mathcal{O}_S(-p^k D)) \arrow[r, hookrightarrow] & H^1(S, \mathcal{O}_S(-p^{k + 1} D))
\end{tikzcd}
$$ is injective for each nonnegative integer. 

Since $H^1(S, \mathcal{O}_S(-p^e D))=0$ for a nonnegative integer $e$, by induction, we obtain $H^1(S, \mathcal{O}_S(-D))=0$.
\end{proof}

\begin{corollary}[Mumford-Ramanujam Vanishing ({\cite[Section 3]{Mukai2013}})]\label{cor:MRVanishing}
    Let $D$ be a nef and big divisor on $S$.
    Assume that $\kappa(S)\le 1$ and $S$ is not quasi-elliptic if $\kappa(S) =1$.
    Then $H^1(S, \mathcal{O}_S(-D))=0$.
\end{corollary}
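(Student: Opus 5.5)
The plan is to deduce this from Corollary \ref{cor:generalizedMumford-Ramanujam}. That reduces the problem to finding a single exponent $e \ge 0$ with $H^1(S, \mathcal{O}_S(-p^e D)) = 0$. Since $D$ is nef and big, $D^2 > 0$, so $D$ satisfies the hypotheses of the corollary, and the hypotheses on $\kappa(S)$ are the same.

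To find such an $e$, use Serre duality:
$$H^1(S, \mathcal{O}_S(-p^e D)) \cong H^1(S, \mathcal{O}_S(K_S + p^e D))^\vee .$$
For $m \gg 0$ the group $H^1(S, \mathcal{O}_S(K_S + mD))$ vanishes whenever $D$ is nef and big. One way to see this is through the Zariski-type vanishing for nef and big divisors on surfaces: $H^1(S, \mathcal{O}_S(L + mD)) = 0$ for any fixed $L$ and $m \gg 0$. This holds in every characteristic, and it is proved by restricting to curves in the null locus of $D$ together with Fujita's vanishing theorem. Concretely, the steps are as follows.
\begin{enumerate}
\item Let $N$ be the (finite, negative definite) union of curves $C$ with $DC = 0$.
\item Choose an effective divisor $Z$ supported on $N$ such that $-Z$ is ample relative to the contraction of $N$.
\item Then $D - \epsilon Z$ is ample for small rational $\epsilon$.
\item Apply Fujita vanishing to the twists by $-kZ$, and control the restrictions to $kZ$ using $(D \cdot C) = 0$ and the negativity of $Z$ along $N$.
\end{enumerate}

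A cleaner alternative avoids the Zariski/Fujita machinery, using the duality already present in the paper's setup. By Corollary \ref{cor:generalizedMumford-Ramanujam}, the maps $H^1(S, \mathcal{O}_S(-p^k D)) \to H^1(S, \mathcal{O}_S(-p^{k+1} D))$ are injective. So if $H^1(S, \mathcal{O}_S(-D)) \neq 0$, then $h^1(S, \mathcal{O}_S(-p^k D)) \ge 1$ for all $k$. This contradicts the asymptotic vanishing of $h^1(S, \mathcal{O}_S(K_S + mD))$ for $m \gg 0$.

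The main obstacle is justifying this asymptotic vanishing for a merely nef and big (not ample) divisor in positive characteristic. My first attempt will be to cite Fujita's/Zariski's theorem that $\bigoplus_m H^0(mD)$ behaves well for nef and big divisors on surfaces. Failing that, I will run the restriction argument on the null locus, where the $H^1$ contributions from the restrictions to multiples of the contractible configuration $N$ are bounded independently of $m$. Those contributions are then killed by an extra twist, which is possible since the cokernels stabilize.
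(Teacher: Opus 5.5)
Your reduction is the paper's. Once some $H^1(S,\mathcal{O}_S(-p^eD))$ vanishes, Corollary \ref{cor:generalizedMumford-Ramanujam} gives $H^1(S,\mathcal{O}_S(-D))=0$. The paper supplies that input by citing \cite[Proposition 2.1]{Szpiro1979} (or \cite[Proposition 2]{LewinMenegaux1981}): $H^1(S,\mathcal{O}_S(-kD))=0$ for all $k\gg 0$ when $D$ is nef and big. Your ``cleaner alternative'' is the same argument in contrapositive form and rests on the same asymptotic vanishing, so it avoids nothing.

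\textbf{The gap is in how you propose to obtain that vanishing.} Your claim that $H^1(S,\mathcal{O}_S(L+mD))=0$ for \emph{every} fixed $L$ and $m\gg 0$ is false when $D$ is nef and big but not ample.
\begin{itemize}
\item Take $\pi\colon S\to X$ the blow-up of a point with exceptional curve $E$, $D=\pi^*A$ with $A$ ample, and $L=2E$.
\item Since $D|_E$ is trivial, restricting to $E$ gives an exact sequence $H^1(S,\mathcal{O}_S(2E+mD))\to H^1(E,\mathcal{O}_E(-2))\cong\mathbf{k}\to H^2(S,\mathcal{O}_S(E+mD))$.
\item The last group is dual to $H^0(S,\mathcal{O}_S(K_S-E-mD))$, which is $0$ for $m\gg 0$.
\item Hence $H^1(S,\mathcal{O}_S(L+mD))\neq 0$ for all large $m$.
\end{itemize}
So your steps 1--4 cannot work as stated. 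Knowing $D\cdot C=0$ on the null curves fixes degrees, but it does not kill $H^1(kZ,\mathcal{O}_{kZ}(L+mD))$, and nothing ``stabilizes away.'' The same objection applies to your suggested citation: finite generation of $\bigoplus_m H^0(S,\mathcal{O}_S(mD))$ is not the relevant input. It can fail for nef and big divisors over a general algebraically closed field of characteristic $p$ (Zariski's example), and even where it holds it does not by itself give $H^1$-vanishing.

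\textbf{What rescues the case you actually need, $L=K_S$, is adjunction.}
\begin{itemize}
\item Peel $kZ$ down one curve at a time, $Z_{j+1}=Z_j+C$, choosing a component $C$ with $Z_{j+1}\cdot C<0$. This is always possible because $Z_{j+1}^2<0$ on the negative definite configuration.
\item Serre duality on $C$ with $\omega_C=\mathcal{O}_C(K_S+C)$ gives $H^1(C,\mathcal{O}_C(K_S+mD-Z_j))\cong H^0(C,\mathcal{O}_C(Z_{j+1}-mD))^\vee$.
\item This vanishes because the line bundle has degree $Z_{j+1}\cdot C<0$ on the integral curve $C$.
\item Hence $H^1(kZ,\mathcal{O}_{kZ}(K_S+mD))=0$, and Fujita vanishing for the ample class $D-\epsilon Z$ handles $H^1(S,\mathcal{O}_S(K_S+mD-kZ))$.
\end{itemize}
Together these prove exactly Szpiro's statement; an argument that never uses the special role of $K_S$ cannot. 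Either cite Szpiro's theorem as the paper does, or run the restriction argument specifically for $K_S+mD$ as above.
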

\begin{proof}
By \cite[Proposition 2.1]{Szpiro1979} (see also \cite[Proposition 2]{LewinMenegaux1981}), we know that $H^1(S, \mathcal{O}_S(-kD))=0$ for all $k  \gg  0$, 
in particular, $H^1(S, \mathcal{O}_S(-p^eD))=0$ for some positive integer $e$. 
By Corollary \ref{cor:generalizedMumford-Ramanujam}, we see that $H^1(S, \mathcal{O}_S(-D))=0$.
\end{proof}

Corollary \ref{cor:generalizedMumford-Ramanujam} together with Theorem \ref{thm:Enokizono} implies the following $Q$-divisor variant of the Miyaoka-Sakai theorem.

\begin{corollary}[Weak Miyaoka-Sakai theorem]\label{cor:weakMS}
    Let $D$ be a big divisor on $S$.
    Assume that $\kappa(S)\le 1$ and $S$ is not quasi-elliptic when $\kappa(S) =1$.
    If $H^1(S, \mathcal{O}_S(-D))\ne 0$, then for any sufficiently large integer $e$, 
    there exists a nonzero effective divisor $B_e$ such that
    \begin{enumerate}
        \item $(p^e D - B_e)C\le 0$ for any irreducible component $C$ of $B_e$,
        \item $H^1(S, \mathcal{O}_S(- p^eD + B_e))=0$,
        \item $p^e D-2B_e$ is big if $D^2>0$.
    \end{enumerate}
\end{corollary}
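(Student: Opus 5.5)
The plan is to apply the machinery of Lemma \ref{lem:Miyaoka} to the divisor $p^eD$, using the $\mathbb{Z}$-positive part of its integral Zariski decomposition as the divisor $Q$. Then Corollary \ref{cor:generalizedMumford-Ramanujam} guarantees that $H^1(S,\mathcal{O}_S(-p^eD))\neq 0$ for every $e\ge 0$, so there is something to prove at every level. Indeed, if $H^1(S,\mathcal{O}_S(-p^eD))=0$ for some $e$, that corollary would force $H^1(S,\mathcal{O}_S(-D))=0$. Since that corollary requires $D^2>0$, I first treat the case $D^2>0$. When $D^2\le 0$, conclusion (3) is vacuous, and I would handle (1) and (2) by the same construction, applied to $P_{\mathbb{Z}}$ in place of $P$ where needed.

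Fix $e$ large. Let $p^eD=P+N$ be the Zariski decomposition, and let $p^eD=P_{\mathbb{Z}}^{(e)}+N_{\mathbb{Z}}^{(e)}$ be the integral Zariski decomposition. By Proposition \ref{prop:IntZD_from_ZD}, $P_{\mathbb{Z}}^{(e)}=P+M$ with $0\le M\le N$, so $p^eD-P_{\mathbb{Z}}^{(e)}=N-M$ is a subdivisor of the negative part. By Lemma \ref{lem:h0D=h0P},
$$\dim H^0(S,\mathcal{O}_S(P_{\mathbb{Z}}^{(e)}))=\dim H^0(S,\mathcal{O}_S(p^eD)).$$
Because $D$ is big, this grows like $p^{2e}\operatorname{vol}(D)/2$. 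Hence for $e\gg 0$ it exceeds $\dim H^1(S,\mathcal{O}_S)_n$, which is a fixed finite number. Theorem \ref{thm:Enokizono} then gives $H^1(S,\mathcal{O}_S(-P_{\mathbb{Z}}^{(e)}))=0$. This is exactly where the phrase ``sufficiently large $e$'' enters, and it holds whether or not $D^2>0$.

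Now run the minimization argument in the proof of Lemma \ref{lem:Miyaoka} with $Q=P_{\mathbb{Z}}^{(e)}$. Among subdivisors $0\le B\le N-M$ with $H^1(S,\mathcal{O}_S(-p^eD+B))=0$, take a minimal one and call it $B_e$.
\begin{itemize}
\item Conclusion (2) holds by construction.
\item $B_e\neq 0$, because $H^1(S,\mathcal{O}_S(-p^eD))\neq 0$.
\item For (1), minimality and the restriction sequence to each component $C$ of $B_e$ give a nonzero section of $\mathcal{O}_S(-p^eD+B_e)|_C$, hence $(p^eD-B_e)C\le 0$.
\end{itemize}
For (3), assume $D^2>0$. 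Since $B_e$ is effective, (1) gives $(p^eD-B_e)B_e\le 0$, and therefore
$$(p^eD-2B_e)^2=p^{2e}D^2-4B_e(p^eD-B_e)>0.$$
Moreover $B_e\le N$, so $PB_e=0$ and $(p^eD-2B_e)P=P^2>0$, which makes $p^eD-2B_e$ big.

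The main obstacle is the non-vanishing $H^1(S,\mathcal{O}_S(-p^eD))\neq 0$ that gives $B_e\neq 0$. The injectivity $H^1(S,\mathcal{O}_S(-p^kD))\hookrightarrow H^1(S,\mathcal{O}_S(-p^{k+1}D))$ in the proof of Corollary \ref{cor:generalizedMumford-Ramanujam} needs $D^2>0$, through Lemma \ref{lem:mukai} applied to $D_X$. So in the degenerate case $D^2\le 0$, I would first try to replace $D$ by the minimal-model pushforward and check that Lemma \ref{lem:mukai}'s fibration argument only uses bigness. If that fails, I would restrict the statement to $D^2>0$, which is the case used for the Mumford-Ramanujam application.
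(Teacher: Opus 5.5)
Your proposal is correct and follows the paper's proof: choose $e$ so that $\dim H^0(S,\mathcal{O}_S(p^eD))=\dim H^0(S,\mathcal{O}_S(P_{\mathbb{Z}}))>\dim H^1(S,\mathcal{O}_S)_n$, obtain $H^1(S,\mathcal{O}_S(-P_{\mathbb{Z}}))=0$ from Theorem \ref{thm:Enokizono}, and run the argument of Lemma \ref{lem:Miyaoka} on $p^eD$ with $Q=P_{\mathbb{Z}}$. Your explicit appeal to Corollary \ref{cor:generalizedMumford-Ramanujam} to get $H^1(S,\mathcal{O}_S(-p^eD))\neq 0$, and hence $B_e\neq 0$, supplies a hypothesis of Lemma \ref{lem:Miyaoka} that the paper's proof leaves implicit; like the paper, whose proof also uses $D^2>0$ throughout, your argument is complete only when $D^2>0$.
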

\begin{proof}
Let $e$ be a sufficiently large number such that $\dim H^0(S, \mathcal{O}_S(p^e D)) >\dim H^1(S, \mathcal{O}_S)_n$.
 Let $p^e D = P_{\mathbb{Z}} + N_{\mathbb{Z}}$ be the integral Zariski decomposition of $p^e D$. Since $D^2 > 0$, then $P_{\mathbb{Z}}^2 > D^2 > 0$.
 By Proposition \ref{prop:IntZD_from_ZD}, we see that $P_{\mathbb{Z}}$ is big. 
 Because 
 $$\dim H^0(S, \mathcal{O}_S(P_{\mathbb{Z}}) = \dim H^0(S, \mathcal{O}_S(p^e D)) > \dim H^1(S, \mathcal{O}_S)_n,$$ by \cite[Theorem] {Enokizono2023}, 
 $$H^1(S, \mathcal{O}_S(- P_{\mathbb{Z}})) = 0.$$
    By Lemma \ref{lem:Miyaoka}, there exists a nonzero effective divisor $B_e$ such that 
    \begin{enumerate}
        \item $(D - B_e)C\le 0$ for any irreducible component of $B_e$.
        \item $(D - B_e)^2 \ge D^2$.
        \item $H^1(S, \mathcal{O}_S(-D + B_e))=0$.
    \end{enumerate}
\end{proof}

\begin{corollary}[Ramanujam Vanishing Theorem]\label{cor:RamanujamVanishing}
    Let $D$ be a big divisor on $S$ such that $D^2>0$.
    Assume that $\kappa(S)\le 1$ and $S$ is not quasi-elliptic when $\kappa(S) =1$. If $D$ is numerically connected, then $H^1(S,\mathcal{O}_S(-D))=0$.
\end{corollary}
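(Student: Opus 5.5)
The plan is to argue by contradiction, combining Corollary \ref{cor:weakMS} with Corollary \ref{cor:generalizedMumford-Ramanujam}. Suppose $H^1(S,\mathcal{O}_S(-D))\ne 0$.

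The first step is to pass to the multiple $p^eD$. By Corollary \ref{cor:generalizedMumford-Ramanujam}, applied contrapositively, $H^1(S,\mathcal{O}_S(-p^eD))\ne 0$ for every $e\ge 0$, since vanishing for one $e$ would force vanishing for $D$ itself. Fix $e$ large enough that $\dim H^0(S,\mathcal{O}_S(p^eD))>\dim H^1(S,\mathcal{O}_S)_n$; this is possible because $D$ is big.

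The second step produces a decomposition. Apply the construction in the proof of Corollary \ref{cor:weakMS}, namely Lemma \ref{lem:Miyaoka} applied to $p^eD$ with $Q=P_{\mathbb{Z}}$ its $\mathbb{Z}$-positive part. This is legitimate since $H^1(S,\mathcal{O}_S(-P_{\mathbb{Z}}))=0$ by Theorem \ref{thm:Enokizono} and Lemma \ref{lem:h0D=h0P}, and since $H^1(S,\mathcal{O}_S(-p^eD))\ne0$ by the first step. It yields a nonzero effective divisor $B$ with three properties:
\begin{itemize}
\item $(p^eD-B)B\le 0$;
\item $p^eD-2B$ is big;
\item $H^1(S,\mathcal{O}_S(-p^eD+B))=0$.
\end{itemize}
In particular, $p^eD=(p^eD-B)+B$ is a decomposition into two pseudoeffective divisors. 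Here $B\ne0$ is effective, and $p^eD-B=(p^eD-2B)+B$ is big.

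The third step derives the contradiction from numerical connectedness. Since $D$ is numerically connected, so is $p^eD$: a pseudoeffective decomposition $p^eD=A'+B'$ gives $D=\frac{1}{p^e}A'+\frac{1}{p^e}B'$, so $A'B'=p^{2e}\cdot\frac{A'}{p^e}\cdot\frac{B'}{p^e}>0$. This uses numerical connectedness for $\mathbb{Q}$-decompositions. If the definition is read only for integral decompositions, I would instead apply Lemma \ref{lem:Miyaoka} directly to $D$ after checking the hypothesis there, or use the fact that the Zariski-decomposition argument is insensitive to scaling. The decomposition $p^eD=(p^eD-B)+B$ is nontrivial, with both parts pseudoeffective and nonzero, and $(p^eD-B)B\le0$. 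This contradicts numerical connectedness, so $H^1(S,\mathcal{O}_S(-D))=0$.

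The main obstacle I expect is the scaling issue in the third step, i.e.\ transferring numerical connectedness from $D$ to $p^eD$ when $B$ need not be divisible by $p^e$. I would resolve it by interpreting the definition for real (pseudoeffective $\mathbb{R}$-)decompositions, which is standard for Ramanujam's lemma. I also need nontriviality, meaning neither part is numerically trivial. For $B$ this holds because $B$ is nonzero effective on a projective surface. For $p^eD-B$ it holds because it is big.
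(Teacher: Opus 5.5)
Your argument is essentially the paper's: assume $H^1(S,\mathcal{O}_S(-D))\ne 0$, use the weak Miyaoka--Sakai construction (Theorem~\ref{thm:Enokizono} with Lemma~\ref{lem:Miyaoka} applied to $p^eD$ and $Q=P_{\mathbb{Z}}$) to get a nonzero effective $B$ with $(p^eD-B)B\le 0$ and $p^eD-2B$ big, and contradict numerical connectedness. You are more careful than the paper's one-line proof, which writes the decomposition as $D=A+B$ even though Corollary~\ref{cor:weakMS} only decomposes $p^eD$. Two of your steps make explicit what the paper leaves implicit. First, you invoke Corollary~\ref{cor:generalizedMumford-Ramanujam} to guarantee $H^1(S,\mathcal{O}_S(-p^eD))\ne 0$, and hence $B\ne 0$. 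Second, your rescaling step uses numerical connectedness for $\mathbb{Q}$-decompositions, which the paper's argument implicitly needs as well. Your fallback of applying Lemma~\ref{lem:Miyaoka} to $D$ itself would not work, since Enokizono's hypothesis $\dim H^0(S,\mathcal{O}_S(D))>\dim H^1(S,\mathcal{O}_S)_n$ may fail for $D$; that is exactly why $p^e$ enters.
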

\begin{proof}
    We proof by contradiction. Assume that $H^1(S,\mathcal{O}_S(-D))\ne 0$. By Corollary \ref{cor:weakMS}, there is a decomposition $D= A + B$ such that $B$ is nonzero and effective, $AB\le 0$ and $A-B$ is big, hence pseudoeffective. However, this contradicts with the numerically connectedness of $D$.
\end{proof}

By Ramanujam's connectedness lemma \ref{lem:Nef=>NumCon}, Mumford-Ramanujam vanishing theorem for nef and big divisors follows from Ramanujam vanishing theorem.


\begin{thebibliography}{{She}91}

\bibitem[ABL22]{Arvidsson2022}
Emelie Arvidsson, Fabio Bernasconi, and Justin Lacini.
\newblock On the {{Kawamata}}--{{Viehweg}} vanishing theorem for log del {{Pezzo}} surfaces in positive characteristic.
\newblock {\em Compositio Mathematica}, 158(4):750--763, April 2022.

\bibitem[B{\u a}d01]{Badescu2001}
Lucian~Silvestru B{\u a}descu.
\newblock {\em Algebraic {{Surfaces}}}.
\newblock Universitext. Springer-Verlag, New York, 2001.

\bibitem[Ber21]{Bernasconi2021}
Fabio Bernasconi.
\newblock Kawamata-viehweg vanishing fails for log del pezzo surfaces in characteristic 3.
\newblock {\em J. Pure Appl. Algebra}, 225(11):106727, 2021.

\bibitem[BK86]{Bloch1986}
Spencer Bloch and Kazuya Kato.
\newblock {$p$-adic etale cohomology}.
\newblock {\em Publications Math{\'e}matiques de l'IH{\'E}S}, 63:107--152, 1986.

\bibitem[BM77]{Bombieri1977}
E.~Bombieri and D.~Mumford.
\newblock Enriques' {{Classification}} of {{Surfaces}} in {{Char}}. p, {{II}}.
\newblock In T.~Shioda and W.~L.~Jr Baily, editors, {\em Complex {{Analysis}} and {{Algebraic Geometry}}: {{A Collection}} of {{Papers Dedicated}} to {{K}}. {{Kodaira}}}, pages 23--42. Cambridge University Press, Cambridge, 1977.

\bibitem[Bog79]{Bogomolov1979}
F.~A. Bogomolov.
\newblock Holomorphic {{Tensors}} and {{Vector Bundles}} on {{Projective Varieties}}.
\newblock {\em Mathematics of the USSR-Izvestiya}, 13(3):499, June 1979.

\bibitem[Che21]{Chen2021}
Yen-An Chen.
\newblock Fujita's conjecture for quasi-elliptic surfaces.
\newblock {\em Mathematische Nachrichten}, 294(11):2096--2104, 2021.

\bibitem[CT18]{Cascini2018}
Paolo Cascini and Hiromu Tanaka.
\newblock Smooth rational surfaces violating {K}awamata-{V}iehweg vanishing.
\newblock {\em Eur. J. Math.}, 4(1):162--176, 2018.

\bibitem[Das21]{Das2021}
Omprokash Das.
\newblock Kawamata-{{Viehweg Vanishing Theorem}} for {{Del Pezzo Surfaces Over Imperfect Fields}} of {{Characteristic P}} {$>$} 3.
\newblock {\em Osaka Journal of Mathematics}, 58(2):477--486, April 2021.

\bibitem[DC13]{DiCerbo2013}
Gabriele Di~Cerbo.
\newblock A cohomological interpretation of {{Bogomolov}}'s instability.
\newblock {\em Proceedings of the American Mathematical Society}, 141(9):3049--3053, June 2013.

\bibitem[DCF15]{DiCerbo2015}
Gabriele Di~Cerbo and Andrea Fanelli.
\newblock Effective {{Matsusaka}}'s theorem for surfaces in characteristic {\emph{p}}.
\newblock {\em Algebra \& Number Theory}, 9(6):1453--1475, September 2015.

\bibitem[Eke88]{Ekedahl1988}
Torsten Ekedahl.
\newblock Canonical models of surfaces of general type in positive characteristic.
\newblock {\em Publications Math\'ematiques de l'IH\'ES}, 67:97--144, 1988.

\bibitem[EL93]{Ein1993}
Lawrence Ein and Robert Lazarsfeld.
\newblock Global generation of pluricanonical and adjoint linear series on smooth projective threefolds.
\newblock {\em Journal of the American Mathematical Society}, 6(4):875--903, 1993.

\bibitem[Eno23]{Enokizono2023}
Makoto Enokizono.
\newblock Vanishing theorems and adjoint linear systems on normal surfaces in positive characteristic.
\newblock {\em Pacific Journal of Mathematics}, 324(1):71--110, June 2023.

\bibitem[Eno24]{Enokizono2024}
Makoto Enokizono.
\newblock An integral version of zariski decompositions on normal surfaces.
\newblock {\em European Journal of Mathematics}, 10(2):38, June 2024.

\bibitem[FdB95]{Fernandez1995}
Guillermo Fern{\'a}ndez~del Busto.
\newblock Bogomolov instability and {Kawamata}-{Viehweg} vanishing.
\newblock {\em J. Algebr. Geom.}, 4(4):693--700, 1995.

\bibitem[GT16]{Gongyo2016}
Yoshinori Gongyo and Shunsuke Takagi.
\newblock Surfaces of globally {{F-regular}} and {{F-split}} type.
\newblock {\em Mathematische Annalen}, 364(3):841--855, April 2016.

\bibitem[GZZ22]{Gu2022}
Yi~Gu, Lei Zhang, and Yongming Zhang.
\newblock Counterexamples to fujita's conjecture on surfaces in positive characteristic.
\newblock {\em Adv. Math.}, 400:Paper No. 108271, 17, 2022.

\bibitem[JR03]{Joshi2003}
Kirti Joshi and C.~S. Rajan.
\newblock Frobenius splitting and ordinarity.
\newblock {\em International Mathematics Research Notices}, 2003(2):109--121, January 2003.

\bibitem[Kaw97]{Kawamata1997}
Yujiro Kawamata.
\newblock On {F}ujita's freeness conjecture for {$3$}-folds and {$4$}-folds.
\newblock {\em Math. Ann.}, 308(3):491--505, 1997.

\bibitem[Kle74]{Kleiman1974}
Steven~L. Kleiman.
\newblock The transversality of a general translate.
\newblock {\em Compos. Math.}, 28:287--297, 1974.

\bibitem[KM98]{Kawachi1998}
Takeshi Kawachi and Vladimir Ma{\c{s}}ek.
\newblock Reider-type theorems on normal surfaces.
\newblock {\em J. Algebr. Geom.}, 7(2):239--249, 1998.

\bibitem[Kos23]{Koseki2023}
Naoki Koseki.
\newblock On the {{Bogomolov}}--{{Gieseker Inequality}} in {{Positive Characteristic}}.
\newblock {\em International Mathematics Research Notices}, 2023(24):20784--20811, December 2023.

\bibitem[KT25]{Kawakami2025}
Tatsuro Kawakami and Hiromu Tanaka.
\newblock Global f-regularity for weak del pezzo surfaces.
\newblock {\em Forum of Mathematics, Sigma}, 13:e77, 2025.

\bibitem[Lan16]{Langer2016}
Adrian Langer.
\newblock The {{Bogomolov}}--{{Miyaoka}}--{{Yau}} inequality for logarithmic surfaces in positive characteristic.
\newblock {\em Duke Mathematical Journal}, 165(14):2737--2769, October 2016.

\bibitem[Lan22]{Langer2022a}
Adrian Langer.
\newblock On boundedness of semistable sheaves.
\newblock {\em Documenta Mathematica}, 27:1--16, 2022.

\bibitem[Laz97]{Lazarsfeld1997}
Robert Lazarsfeld.
\newblock Lectures on linear series. {With} the assistance of {Guillermo} {Fern{\'a}ndez} del {Busto}.
\newblock In {\em Complex algebraic geometry. Lectures of a summer program, Park City, UT, 1993}, pages 163--219. Providence, RI: American Mathematical Society, 1997.

\bibitem[Laz04]{Lazarsfeld2004}
Robert Lazarsfeld.
\newblock {\em Positivity in Algebraic Geometry. {{I}}. {{Classical}} Setting: Line Bundles and Linear Series}, volume~48 of {\em Ergeb. {{Math}}. {{Grenzgeb}}., 3. {{Folge}}}.
\newblock Berlin: Springer, 2004.

\bibitem[LM81]{LewinMenegaux1981}
R.~Lewin-Ménégaŭx.
\newblock Un théorème d’annulation en caractéristique positive.
\newblock In {\em Séminaire sur les pinceaux de courbes de genre au moins deux}, volume~86 of {\em Astérisque}, pages 35--43. Société Mathématique de France, 1981.

\bibitem[LT25]{Larsen2025}
Anne Larsen and Anda Tenie.
\newblock Reider-{{Type Theorems}} on {{Normal Surfaces}} via {{Bridgeland Stability}}.
\newblock {\em International Mathematics Research Notices}, 2025(16):253, August 2025.

\bibitem[Miy80]{Miyaoka1980}
Yoichi Miyaoka.
\newblock On the {Mumford}-{Ramanujam} vanishing theorem on a surface.
\newblock Journ{\'e}es de g{\'e}om{\'e}trie alg{\'e}brique, {Angers}/{France} 1979, 239-247 (1980)., 1980.

\bibitem[MR85]{Mehta1985}
V.~B. Mehta and A.~Ramanathan.
\newblock Frobenius {{Splitting}} and {{Cohomology Vanishing}} for {{Schubert Varieties}}.
\newblock {\em Annals of Mathematics}, 122(1):27--40, 1985.

\bibitem[MS91]{Mehta1991}
V.~B Mehta and V~Srinivas.
\newblock Normal {{F-pure}} surface singularities.
\newblock {\em Journal of Algebra}, 143(1):130--143, October 1991.

\bibitem[Muk13]{Mukai2013}
Shigeru Mukai.
\newblock Counterexamples to {K}odaira's vanishing and {Y}au's inequality in positive characteristics.
\newblock {\em Kyoto J. Math.}, 53(2):515--532, 2013.

\bibitem[Mum78]{Mumford1978}
D.~Mumford.
\newblock Some footnotes to the work of {C}. {P}. {Ramanujam}.
\newblock C.P. {Ramanujam}. - {A} tribute. {Collect}. {Publ}. of {C}.{P}. {Ramanujam} and {Pap}. in his {Mem}., {Tata} {Inst}. fundam. {Res}., {Stud}. {Math}. 8, 247-262 (1978)., 1978.

\bibitem[Nak93]{Nakashima1993b}
Tohru Nakashima.
\newblock {On Reider's method for surfaces in positive characterstic.}
\newblock {\em Journal f{\"u}r die reine und angewandte Mathematik}, 438:175--186, 1993.

\bibitem[PST17]{Patakfalvi2017}
Zsolt Patakfalvi, Karl Schwede, and Kevin Tucker.
\newblock Positive characteristic algebraic geometry, January 2017.

\bibitem[Ram74]{Ramanujam1974}
C.~P. Ramanujam.
\newblock Supplement to the article ''{Remarks} on the {Kodaira} vanishing theorem''.
\newblock {\em J. Indian Math. Soc., New Ser.}, 38:121--124, 1974.

\bibitem[Ray78]{Raynaud1978}
M.~Raynaud.
\newblock Contre-exemple au ``vanishing theorem''\ en caract\'eristique {$p>0$}.
\newblock In {\em C. {P}. {R}amanujam---a tribute}, volume~8 of {\em Tata Inst. Fundam. Res. Stud. Math.}, pages 273--278. Springer, Berlin-New York, 1978.

\bibitem[Rei88]{Reider1988}
Igor Reider.
\newblock Vector bundles of rank {$2$} and linear systems on algebraic surfaces.
\newblock {\em Ann. of Math. (2)}, 127(2):309--316, 1988.

\bibitem[Ros02]{Rosoff2002}
Jeff Rosoff.
\newblock Effective divisor classes on a ruled surface.
\newblock {\em Pacific Journal of Mathematics}, 202(1):119--124, January 2002.

\bibitem[Sak84]{Sakai1984}
Fumio Sakai.
\newblock Weil divisors on normal surfaces.
\newblock {\em Duke Mathematical Journal}, 51(4):877--887, December 1984.

\bibitem[Sak90]{Sakai1990}
Fumio Sakai.
\newblock Reider-serrano's method on normal surfaces.
\newblock In Andrew~John Sommese, Aldo Biancofiore, and Elvira~Laura Livorni, editors, {\em Algebraic Geometry}, pages 301--319, Berlin, Heidelberg, 1990. Springer Berlin Heidelberg.

\bibitem[{She}91]{Shepherd-Barron1991}
N.~I. {Shepherd-Barron}.
\newblock Unstable vector bundles and linear systems on surfaces in characteristic $p$.
\newblock {\em Inventiones mathematicae}, 106(1):243--262, December 1991.

\bibitem[Shi17]{Shirato2017}
Tomoaki Shirato.
\newblock Frobenius splitting for some {{Abelian}} fiber spaces.
\newblock {\em Journal of Pure and Applied Algebra}, 221(6):1383--1406, June 2017.

\bibitem[Smi00]{Smith2000}
Karen~E. Smith.
\newblock Globally {{F-regular}} varieties: Applications to vanishing theorems for quotients of {{Fano}} varieties.
\newblock {\em Michigan Mathematical Journal}, 48(1), 2000.

\bibitem[Spr96]{Spreafico1996}
Maria~Luisa Spreafico.
\newblock Bertini type theorems for vector bundles in any characteristic.
\newblock {\em Commun. Algebra}, 24(13):4147--4157, 1996.

\bibitem[SS10]{Schwede2010}
Karl Schwede and Karen~E. Smith.
\newblock Globally {{F-regular}} and log {{Fano}} varieties.
\newblock {\em Advances in Mathematics}, 224(3):863--894, June 2010.

\bibitem[Szp79]{Szpiro1979}
L.~Szpiro.
\newblock Sur le théorème de rigidité de parsin et arakelov.
\newblock In {\em Astérisque}, volume~64, pages 169--202. Société Mathématique de France, 1979.

\bibitem[Tan72]{Tango1972}
Hiroshi Tango.
\newblock On the behavior of extensions of vector bundles under the {{Frobenius}} map.
\newblock {\em Nagoya Mathematical Journal}, 48:73--89, 1972.

\bibitem[Tan24]{Tanaka2024}
Hiromu Tanaka.
\newblock Kawamata-{{Viehweg}} vanishing for toric varieties, October 2024.

\bibitem[Ter98]{Terakawa1998}
Hiroyuki Terakawa.
\newblock On the {{Kawamata-Viehweg}} vanishing theorem for a surface in positive characteristic.
\newblock {\em Archiv der Mathematik}, 71(5):370--375, November 1998.

\bibitem[Ter99]{Terakawa1999}
Hiroyuki Terakawa.
\newblock The d-very ampleness on a projective surface in positive characteristic.
\newblock {\em Pacific Journal of Mathematics}, 187(1):187--199, January 1999.

\bibitem[Tot19]{Totaro2019}
Burt Totaro.
\newblock The failure of {K}odaira vanishing for {F}ano varieties, and terminal singularities that are not {C}ohen-{M}acaulay.
\newblock {\em J. Algebraic Geom.}, 28(4):751--771, 2019.

\bibitem[WX17]{Wang2017}
Yuan Wang and Fei Xie.
\newblock Vanishing on toric surfaces, July 2017.

\bibitem[Xie10]{Xie2010}
Qihong Xie.
\newblock Counterexamples to the {K}awamata-{V}iehweg vanishing on ruled surfaces in positive characteristic.
\newblock {\em J. Algebra}, 324(12):3494--3506, 2010.

\bibitem[Xie11]{Xie2011}
Qihong Xie.
\newblock A characterization of counterexamples to the {K}odaira-{R}amanujam vanishing theorem on surfaces in positive characteristic.
\newblock {\em Chinese Ann. Math. Ser. B}, 32(5):741--748, 2011.

\bibitem[YZ20]{Ye2020}
Fei Ye and Zhixian Zhu.
\newblock On {F}ujita's freeness conjecture in dimension 5.
\newblock {\em Adv. Math.}, 371:107210, 56, 2020.
\newblock With an appendix by Jun Lu.

\bibitem[Zha23]{Zhang2023}
Yongming Zhang.
\newblock The {\emph{d}}-very ampleness of adjoint line bundles on quasi-elliptic surfaces.
\newblock {\em Journal of Pure and Applied Algebra}, 227(5):107271, May 2023.

\end{thebibliography}
\end{document}